\theoremstyle{plain}
\newtheorem{thm}{Theorem}[section]
\newtheorem{lem}[thm]{Lemma}
\newtheorem{prop}[thm]{Proposition}
\theoremstyle{definition}
\newtheorem{defn}[thm]{Definition}
\newtheorem{rem}[thm]{Remark}
\newtheorem{cla}[thm]{Claim}
\numberwithin{equation}{section}
\def\supp{\operatorname{supp}}
\def\Lip{\operatorname{Lip}}
\def\eps{\varepsilon}
\def\kap{\varkappa}
\def\Cau{\mathcal{C}}
\def\Comp{\mathbb{C}}
\def\XXint#1#2#3{{\setbox0=\hbox{$#1{#2#3}{\int}$}
     \vcenter{\hbox{$#2#3$}}\kern-.5\wd0}}
\begin{document}

\title[Reflectionless measures and rectifiability]
{Reflectionless measures and the Mattila-Melnikov-Verdera uniform rectifiability theorem}

\author[B. Jaye]
{Benjamin Jaye}
\address{Department of Mathematics,
Kent State University,
Kent, OH 44240, USA}
\email{bjaye@kent.edu}

\author[F. Nazarov]
{Fedor Nazarov}
\address{Department of Mathematics,
Kent State University,
Kent, OH 44240, USA}
\email{nazarov@math.kent.edu}

\date{\today}

\begin{abstract}The aim of these notes is to provide a new proof of the  Mattila-Melnikov-Verdera theorem on the uniform rectifiability of an Ahlfors-David regular measure whose associated Cauchy transform operator is bounded.  They are based on lectures given by the second author in the analysis seminars at Kent State University and Tel-Aviv University.\end{abstract}

\maketitle

\section{Introduction}


The purpose of these notes is to provide a new proof of Mattila, Melnikov, and Verdera's theorem.  The exposition is self-contained, relying only on a knowledge of basic real analysis.

\begin{thm}\label{thm1}\cite{MMV} An Ahlfors-David regular measure $\mu$ whose associated Cauchy transform operator is bounded in $L^2(\mu)$ is uniformly rectifiable.

\end{thm}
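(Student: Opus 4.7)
The plan is a blow-up (compactness) argument that reduces the theorem to a rigidity statement for \emph{reflectionless measures}. By a reflectionless measure I mean a non-trivial Radon measure $\nu$ on $\Comp$ for which $\Cau\nu$ vanishes, in an appropriate principal value sense, at $\nu$-almost every point of $\supp\nu$. The central rigidity result I would establish first, playing the role that the Menger curvature identity plays in the original MMV argument, is: \emph{any AD-regular reflectionless measure on $\Comp$ is a constant multiple of one-dimensional Hausdorff measure on a straight line.}

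Given that rigidity theorem, I would prove Theorem~\ref{thm1} by contradiction. Suppose $\mu$ is AD-regular with $L^2(\mu)$-bounded Cauchy transform but fails to be uniformly rectifiable. Using a quantitative characterization of the failure of uniform rectifiability (for instance, the persistent failure of bilateral approximation by a line, or an unbounded Jones $\beta$-number sum), produce a sequence of balls $B(x_n, r_n)$ and an $\eps > 0$ such that the rescalings
\[
\mu_n(E) := r_n^{-1}\mu(x_n + r_n E)
\]
remain $\eps$-far on $B(0,1)$ from every measure supported on a line. A weak-$*$ compactness argument extracts a subsequential limit $\nu$: Ahlfors-David regularity and $L^2$-boundedness of the Cauchy transform pass to such limits with no loss of constants, so $\nu$ is a non-trivial AD-regular measure whose Cauchy transform is $L^2(\nu)$-bounded, and which by construction is not supported on a line.

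The heart of the argument is to arrange that this $\nu$ is in fact reflectionless, for then the rigidity theorem yields the contradiction. To that end I would refine the choice of basepoints $(x_n,r_n)$ at the pre-limit stage. Since $\Cau$ is $L^2(\mu)$-bounded, its principal values exist $\mu$-a.e., and a pigeonholing/Lebesgue-differentiation argument allows one to select $x_n$ at which the truncated Cauchy transforms of $\mu_n$ are ever smaller in $L^2$ norm on $B(0,1)$. Passing to the weak-$*$ limit and using the antisymmetry of the Cauchy kernel together with scale-invariant bounds, the local $L^2$-smallness should upgrade to the global condition $\Cau\nu \equiv 0$ in principal value on $\supp\nu$.

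The main obstacle I expect lies precisely in this last implication: securing that the limit is reflectionless, not merely of $L^2$-bounded Cauchy transform. The rigidity theorem itself is delicate but essentially a local analysis of the holomorphic structure of $\Cau\nu$ across $\supp\nu$. By contrast, the blow-up step is a global bookkeeping question, and making the small errors in the pre-limit $L^2$ estimates stable under the passage to a weak-$*$ limit (uniformly across the chosen scales and basepoints) is where the full strength of the $L^2$-boundedness hypothesis must enter. I expect this is where most of the technical work of the proof will live.
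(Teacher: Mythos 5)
Your high-level strategy is the same as the paper's: blow up to a reflectionless measure, prove that AD-regular reflectionless measures are multiples of $\mathcal{H}^1$ on a line, and derive a contradiction from a limit measure that retains a witness of non-flatness. The genuine gap is in the step you yourself flag as the heart of the matter: arranging that the blow-up limit is reflectionless. Your proposed mechanism --- first choose $(x_n,r_n)$ witnessing non-flatness, then ``pigeonhole/Lebesgue-differentiate'' to make the truncated Cauchy transforms small at those same points and scales --- does not work as stated, because the two selections are in tension: a priori the scales at which flatness fails could be exactly the scales at which the Cauchy transform is large, and almost-everywhere existence of principal values gives no control at a \emph{specific} sequence of scales $r_n\to 0$. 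The resolution is not a differentiation argument but a Carleson-packing argument. The paper introduces, for each dyadic $Q$, the quantity $\Theta_{A,A'}(Q)$ measuring the pairing of $\Cau_\mu(\chi_F)$ against normalized mean-zero Lipschitz bumps adapted to $Q$, and shows via a Riesz-system (almost-orthogonality) estimate that $\sum_{Q\subset P}\Theta_{A,A'}(Q)^2\ell(Q)\le C\ell(P)$; hence the squares with $\Theta_{A,A'}(Q)>\gamma$ form a Carleson family. Separately, a graph construction in the style of David--Jones--Semmes shows that if the geometrically ``bad'' (non-flat) squares form a Carleson family, then $\mu$ is uniformly rectifiable. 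The contradiction hypothesis therefore hands you bad squares with $\Theta_{A,A'}(Q)$ arbitrarily small, and \emph{those} are the squares you blow up. Without some version of this packing bookkeeping, your selection of basepoints cannot be made.

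Two further points. First, the reduction from uniform rectifiability to a Carleson condition on non-flat squares is itself a construction (an inductively built graph plus an Euler walk), not something you can simply cite as ``a quantitative characterization of the failure of uniform rectifiability''; if you invoke Jones $\beta$-numbers instead you still owe a proof that the relevant Carleson condition yields a single Lipschitz curve covering $E\cap P$ with length $\lesssim\ell(P)$. Second, the paper deliberately avoids principal values: reflectionlessness is formulated weakly, as $\langle\widetilde{\Cau}_\mu(1),\psi\rangle_\mu=0$ for all compactly supported mean-zero Lipschitz $\psi$, precisely because this formulation is stable under weak limits of measures (your principal-value formulation is not obviously so), and the rigidity proof then proceeds through a resolvent identity $[\widetilde{\Cau}_\mu(1)]^2=2\kappa\,\widetilde{\Cau}_\mu(1)$ off the support, tangent measures, and a geometric half-space argument --- a substantial piece of work that your proposal asserts but does not supply.
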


The precise statement of this theorem is given in Section \ref{statementsec}.  The scheme employed to prove Theorem \ref{thm1} in these notes is quite different from that in \cite{MMV}, and relies upon a characterization of \textit{reflectionless measures}.  In this regard, one may compare the proof to that of Mattila's theorem \cite{Mat95b}: \emph{Suppose that $\mu$ is a  finite Borel measure satisfying $\liminf_{r\rightarrow 0} \frac{\mu(B(z,r))}{r} \in (0,\infty)$ for $\mu$-a.e. $z\in \Comp$.  If the Cauchy transform of $\mu$ exists $\mu$-a.e. in the sense of principal value,  then $\mu$ is rectifiable}.  Mattila's proof of this theorem uses a characterization of \textit{symmetric measures}, the reader may consult Chapter 14 of the book \cite{Mat95} for more information.

Subsequently, Mattila's theorem was generalized to the case of singular integrals in higher dimensions by Mattila and Preiss in \cite{MP95}.   To find the analogous generalization of the proof we carry out here would answer a longstanding problem of David and Semmes \cite{DS}.  Very recently, Nazarov, Tolsa, and Volberg \cite{NTV12} completed the solution of the problem of David and Semmes in the case of singular integral operators of co-dimension 1.  They proved that if $\mu$ is a $d$-dimensional Ahlfors-David regular measure in $\mathbb{R}^{d+1}$, then the boundedness of the $d$-dimensional Riesz transform in $L^2(\mu)$ implies that one of the criteria for uniform rectifiability given in \cite{DS} is satisfied.  See \cite{NTV12} for more details, and for further references and history about this problem.

Throughout this paper, we shall only consider Ahlfors-David regular measures.  For closely related results without this assumption, see the paper of L\'{e}ger \cite{Leg}.

\section{Notation}

We shall adopt the following notation:

\begin{itemize}
\item  $B(z,r)$ denotes the open disc centred at $z$ with radius $r>0$.

\item For a square $Q$, we set $z_Q$ to be the centre of $Q$, and $\ell(Q)$ to denote the side-length of $Q$.

\item We shall denote by $\mathcal{D}$ the standard lattice of dyadic squares in the complex plane.  A dyadic square is any square of the form $[k2^{j}, (k+1)2^{j})\times [\ell 2^{j}, (\ell+1)2^{j})$ for $j, k$ and $\ell$ in $\mathbb{Z}$.

\item We define the Lipschitz norm of a function $f$ by $$\|f\|_{\Lip} = \sup_{z, \xi\in \mathbb{C}, z\neq \xi} \frac{|f(z) - f(\xi)|}{|z-\xi|}.$$

\item We denote by $\Lip_0(\mathbb{C})$ the space of compactly supported functions with finite Lipschitz norm.  The continuous functions with compact support are denoted by $C_0(\mathbb{C})$.

\item For $f:\mathbb{C}\rightarrow\mathbb{C}$, we set $\|f\|_{\infty} = \sup_{z\in \mathbb{C}}|f(z)|.$  In particular, note that we are taking the \textit{pointwise everywhere} supremum here.

\item The closure of a set $E$ is denoted by $\overline{E}$

\item  The support of a measure $\mu$ is denoted by $\supp(\mu)$.

\item For a line $L$, we write the one dimensional Hausdorff measure restricted to $L$ by $\mathcal{H}^1_{L}$.  If $L=\mathbb{R}$, we instead write $m_1$.

\item We will denote by $C$ and $c$ various positive absolute constants.  These constants may change from line to line within an intermediate argument.  The constant $C$ is thought of as large (at the very least greater than $1$), while $c$ is thought of as small (certainly smaller than $1$).  We shall usually make any dependence of a constant on a parameter explicit, unless it is clear from the context what the dependencies are.
\end{itemize}

\section{The precise statement of Theorem \ref{thm1}}\label{statementsec}

\subsection{The Cauchy transform of a measure $\mu$}  Let $K(z)=\tfrac{1}{z}$ for $z\in \mathbb{C}\backslash \{0\}$.  For a measure $\nu$, the \textit{Cauchy transform} of  $\nu$ is formally defined by 
$$\Cau(\nu)(z) = \int_{\Comp}K(z-\xi) d\nu(\xi),
 \text{ for }z\in \mathbb{C}.
$$
In general, the singularity in the kernel is too strong to expect the integral to converge absolutely on $\supp(\nu)$.  It is therefore usual to introduce a regularized Cauchy kernel.  For $\delta>0$, define
$$K_{\delta}(z) = \frac{\bar{z}}{\max(\delta, |z|)^2}.
$$
Then the $\delta$-regularized Cauchy transform of $\nu$ is defined by
$$\Cau_{\delta}(\nu)(z) =  \int_{\Comp}K_{\delta}(z-\xi) d\nu(\xi), \text{ for }z\in \mathbb{C}.
$$

Before we continue, let us introduce a very natural condition to place upon $\mu$.  A measure $\mu$ is called $C_0$-\emph{nice} if $\mu(B(z,r))\leq C_0 r$ for any disc $B(z,r)\subset \Comp$.

If $\mu$ is a $C_0$-nice measure, then for any $f\in L^2(\mu)$ and $z\in \Comp$, we have that $\Cau_{\mu, \delta}(f)(z):=\Cau_{\delta}(f\mu)(z)$ is bounded in absolute value in terms of $\delta$, $C_0$, and $\|f\|_{L^2(\mu)}$.  To see this, we shall need an elementary tail estimate, which we shall refer to quite frequently in what follows:

\begin{lem}\label{tailest}  Suppose $\mu$ is $C_0$-nice measure.  For every $\eps>0$ and $r>0$, we have
$$\int\limits_{\mathbb{C}\backslash B(0,r)} \frac{1}{|\xi|^{1+\eps}}d\mu(\xi)\leq \frac{C_0(1+\eps)}{\eps} r^{-\eps}.
$$
\end{lem}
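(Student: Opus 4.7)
The plan is to use the layer-cake (distribution function) representation of the integral, which converts the tail estimate into a question about $\mu(B(0,R))$ for various radii $R$, where the niceness hypothesis $\mu(B(0,R))\le C_0 R$ applies directly.

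Concretely, I would set $f(\xi)=|\xi|^{-(1+\eps)}\mathbf{1}_{\{|\xi|>r\}}$ and write
\[
\int_{\mathbb{C}\setminus B(0,r)} \frac{d\mu(\xi)}{|\xi|^{1+\eps}}
=\int_0^\infty \mu\bigl(\{\xi:f(\xi)>t\}\bigr)\,dt.
\]
For $t>0$ the superlevel set is $\{r<|\xi|<t^{-1/(1+\eps)}\}$, which is empty unless $t<r^{-(1+\eps)}$. In that range it is contained in $B(0,t^{-1/(1+\eps)})$, and niceness gives
\[
\mu\bigl(\{f>t\}\bigr)\le \mu\bigl(B(0,t^{-1/(1+\eps)})\bigr)\le C_0\, t^{-1/(1+\eps)}.
\]
Plugging this back in and computing the single-variable integral
\[
\int_0^{r^{-(1+\eps)}} C_0\, t^{-1/(1+\eps)}\,dt
= C_0\cdot\frac{1+\eps}{\eps}\,\bigl(r^{-(1+\eps)}\bigr)^{\eps/(1+\eps)}
= \frac{C_0(1+\eps)}{\eps}\,r^{-\eps}
\]
yields exactly the claimed bound.

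There is no real obstacle here; the only point requiring minor care is matching the constant $\tfrac{1+\eps}{\eps}$. A dyadic annular decomposition $\{2^kr\le|\xi|<2^{k+1}r\}$ combined with $\mu(B(0,2^{k+1}r))\le 2C_0 2^k r$ also proves a bound of the same form, but it produces a constant proportional to $(1-2^{-\eps})^{-1}$, which is of the right order as $\eps\to 0$ but does not give the clean factor stated in the lemma; this is the reason to prefer the layer-cake version. The statement is of course uniform in the center of the ball, since a translate of a $C_0$-nice measure is $C_0$-nice, so there is no loss in centering at $0$.
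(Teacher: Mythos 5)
The paper leaves this lemma as an exercise, so there is no official proof to compare against. Your layer-cake argument is correct and is the natural way to get the exact constant: the superlevel set $\{f>t\}$ is empty for $t\ge r^{-(1+\eps)}$ and otherwise contained in $B(0,t^{-1/(1+\eps)})$, so niceness gives $\mu(\{f>t\})\le C_0 t^{-1/(1+\eps)}$, and $\int_0^{r^{-(1+\eps)}}C_0 t^{-1/(1+\eps)}\,dt=\tfrac{C_0(1+\eps)}{\eps}r^{-\eps}$ exactly as claimed. Your side remark is also accurate: a dyadic annular decomposition yields the same qualitative bound with a constant of order $(1-2^{-\eps})^{-1}\sim \eps^{-1}$, which would serve every application in the paper equally well (only the order of growth as $\eps\to 0$ is ever used), but the layer-cake version reproduces the stated constant verbatim.
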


The proof of Lemma \ref{tailest} is a standard exercise, and is left to the reader.  With this lemma in hand, we return to our claim that $C_{\mu, \delta}(f)$ is bounded.  First apply the Cauchy-Schwarz inequality to estimate $$|\Cau_{\mu, \delta}(f)(z)|\leq \Bigl(\int_{\mathbb{C}}|K_{\delta}(z-\xi)|^2d\mu(\xi)\Bigl)^{1/2}\|f\|_{L^2(\mu)}.$$
But now $\int_{\mathbb{C}}|K_{\delta}(z-\xi)|^2d\mu(\xi) \leq \int_{B(z,\delta)}\frac{|\xi-z|^2}{\delta^4}d\mu(\xi) + \int_{\mathbb{C}\backslash B(z,\delta)}\frac{1}{|\xi-z|^2}d\mu(\xi).
$
The first term on the right hand side of this inequality is at most $\tfrac{\mu(B(z,\delta))}{\delta^2}\leq \tfrac{C_0}{\delta}$, and the second term is no greater than $\tfrac{2C_0}{\delta}$ by Lemma \ref{tailest}.  We therefore see that $|\Cau_{\mu, \delta}(f)(z)|\leq \bigl(\tfrac{3C_0}{\delta}\bigl)^{1/2}\|f\|_{L^2(\mu)}$.  In particular, we have  $\Cau_{\mu, \delta}(f)(z)\in L^2_{\text{loc}}(\mu)$.

One conclusion of this discussion is that for any nice measure $\mu$, it makes sense to ask if $C_{\mu,\delta}$ is a bounded operator from $L^2(\mu)$ to $L^2(\mu)$.

\begin{defn} We say that $\mu$ is $C_0$-\emph{good} if it is $C_0$-nice and
$$\sup_{\delta>0}\|\Cau_{\mu, \delta}\|_{L^2(\mu)\rightarrow L^2(\mu)}\leq C_0.
$$
By definition, the Cauchy transform operator associated with $\mu$ is bounded in $L^2(\mu)$ if $\mu$ is good.
\end{defn}

The two dimensional Lebesgue measure restricted to the unit disc is good.  However, this measure is not supported on a $1$-rectifiable set and so such measures should be ruled out in a statement such as Theorem \ref{thm1}.  To this end, we shall deal with Ahlfors-David (AD) regular measures.

\begin{defn}\label{ADreg}  A nice measure $\mu$ is called AD-regular, with regularity constant $c_0>0$, if $\mu(B(z,r))\geq c_0 r$ for any disc $B(z,r)\subset \mathbb{C}$ with $z\in \supp(\mu)$.
\end{defn}



\subsection{Uniform rectifiability}  A set $E\subset \mathbb{C}$ is called \emph{uniformly rectifiable} if there exists $M>0$ such that for any dyadic square $Q\in \mathcal{D}$, there exists a Lipschitz mapping $F:[0,1]\rightarrow \mathbb{C}$ with $\|F\|_{\Lip}\leq M\ell(Q)$ and $E\cap Q \subset F([0,1])$.

We can altnernatively say that $E$ is uniformly rectifiable if there exists $M>0$ such that for any dyadic square $Q\in \mathcal{D}$, there is a rectifiable curve containing $E\cap Q$ of length no greater than $M\ell(Q)$.

A measure $\mu$ is uniformly rectifiable if the set $E=\supp(\mu)$ is uniformly rectifiable.

We may now restate Theorem \ref{thm1} in a more precise way.

\begin{thm}  A good AD-regular measure $\mu$ is uniformly rectifiable.
\end{thm}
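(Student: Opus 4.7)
The plan is to argue by contradiction via a compactness and blow-up scheme, reducing the theorem to a rigidity statement for AD-regular \emph{reflectionless} measures on the plane. Suppose $\mu$ is $C_0$-good and $c_0$-AD-regular but fails to be uniformly rectifiable. Then for each $n\in\mathbb{N}$ there is a dyadic square $Q_n$ for which $\supp(\mu)\cap Q_n$ cannot be covered by any Lipschitz image $F_n([0,1])$ with $\|F_n\|_{\Lip}\le n\,\ell(Q_n)$. Translate and rescale by $\ell(Q_n)$ so that $Q_n$ becomes the unit square centred at the origin; the rescaled measures $\mu_n(A):=\ell(Q_n)^{-1}\mu(z_{Q_n}+\ell(Q_n)A)$ preserve the $C_0$-niceness, the AD-regularity constant $c_0$, and the good constant $C_0$, while inheriting a quantitative failure of uniform rectifiability at the unit scale.

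Next I would extract a weak-$\ast$ subsequential limit $\mu_\infty$. The two-sided AD bound makes $\{\mu_n\}$ tight and locally uniformly bounded on compact sets, so a diagonal extraction gives $\mu_n\to\mu_\infty$ against $C_0(\Comp)$ along a subsequence, with $\mu_\infty$ still $C_0$-nice and $c_0$-AD-regular (so in particular supported on the closure of a limiting set that is nontrivial at the unit scale). I would then verify that $\mu_\infty$ remains $C_0$-good by passing the bilinear form $(f,g)\mapsto\int g\,\Cau_{\mu_n,\delta}(f)\,d\mu_n$ to the limit for $f,g\in\Lip_0(\Comp)$: the kernel $K_\delta$ is bounded and continuous away from the diagonal, so weak-$\ast$ convergence handles the compact contribution, while Lemma~\ref{tailest} bounds the tail uniformly in $n$ and $\delta$.

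The heart of the argument is to promote $\mu_\infty$ to a \emph{reflectionless} measure, that is, one whose Cauchy transform (interpreted as a principal value, or as a limit of $\Cau_{\mu_\infty,\delta}$ in an appropriate weak sense) satisfies an exact symmetry or vanishing identity on $\supp(\mu_\infty)$. The heuristic is that zooming in at generic points pushes any ``boundary" of the support off to infinity, so the mere $L^2$-boundedness of $\Cau_{\mu,\delta}$ is upgraded to an algebraic identity in the limit. Executing this rigorously should require a careful Lebesgue-differentiation / density step applied to the bilinear form above, used to select the blow-up centres so that all error terms decay along the subsequence; the precise formulation of ``reflectionless" then has to be tailored to be both inherited by the limit and strong enough to be useful in the next step.

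Finally, I would invoke the classification theorem for AD-regular reflectionless measures: every such measure on $\Comp$ is a constant multiple of $\mathcal{H}^1_L$ for some line $L$. Granted this, $\mu_\infty$ is supported on a straight line, and weak-$\ast$ convergence of $c_0$-AD-regular measures forces $\supp(\mu_n)\cap Q_n$ into a thin tubular neighbourhood of that line for all large $n$; standard covering arguments then trap $\supp(\mu_n)\cap Q_n$ in a Lipschitz image with a universal norm, contradicting the defining property of $Q_n$. The main obstacle is precisely the classification step: turning an integrated reflectionless identity into pointwise flatness of $\supp(\mu_\infty)$ is the Cauchy-kernel analogue of Mattila's characterization of symmetric measures referenced in the introduction, and this is where I expect essentially all of the genuine analytic work of the proof to live.
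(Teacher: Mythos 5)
There is a genuine gap, and it sits precisely where your scheme replaces the paper's multi-scale bookkeeping with a single blow-up. Uniform rectifiability is a quantitative, \emph{all-scales} property: the length of the shortest curve covering $\supp(\mu)\cap Q_n$ accumulates contributions from every scale below $\ell(Q_n)$. Your rescaled measures $\mu_n$ "inherit a quantitative failure of uniform rectifiability at the unit scale," but that failure is not detectable by the weak limit: $\mu_\infty$ sees only finitely much information at each fixed scale, and it can perfectly well be $c\,\mathcal{H}^1_L$ even though each $\mu_n$ requires a curve of length $\ge n$ (the length can live entirely in oscillations at scales tending to $0$). For the same reason your final step fails: knowing that $\supp(\mu_n)\cap Q_n$ lies in a thin tube around a line says nothing about the length of a curve needed to cover it, so no contradiction with the choice of $Q_n$ is obtained. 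Separately, your middle step --- that a blow-up of a good measure is automatically reflectionless because ``the boundary is pushed to infinity'' --- is false: tangent measures of good AD-regular measures are just good AD-regular measures (e.g.\ the sum of arclength on two parallel lines is such a tangent measure and is not reflectionless, as the classification itself shows). Reflectionlessness has to be \emph{earned} from a quantitative smallness hypothesis before passing to the limit.

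The paper supplies exactly the two missing ingredients. First, a combinatorial graph construction shows that $\supp(\mu)\cap P$ is covered by a curve of length $\le C\ell(P)$ provided the family of ``bad squares'' (squares where a segment joining two nearby support points passes through a $\tau\ell(Q)$-hole in the support) satisfies a Carleson packing condition $\sum_{Q\subset P}\ell(Q)\le C\ell(P)$; this is the device that converts a per-scale geometric statement into control of the total length. Second, the $L^2$-boundedness of $\Cau_\mu$ is converted, via a Riesz system of Lipschitz mean-zero test functions, into a Carleson condition on the squares where a coefficient $\Theta_{A,A'}(Q)$ exceeds $\gamma$; the compactness/blow-up argument is then applied not to the negation of uniform rectifiability but to the negation of the single-square claim ``$Q$ bad $\Rightarrow\Theta_{A,A'}(Q)\ge\gamma$.'' It is the smallness of the pairings $\langle\Cau_{\mu_k}(\chi_{E_k}),\psi\rangle$ in that negation that makes the limit measure reflectionless, while the badness of $Q_k$ survives to give a hole in $\supp(\mu_\infty)$ contradicting the line classification. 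Your endgame (the classification of reflectionless AD-regular measures) coincides with the paper's, but without the Carleson-family reduction and the Riesz-system square function the contradiction cannot be closed.
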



\section{Making sense of the Cauchy transform on $\supp(\mu)$} \label{weaklims}
The definition of a good measure does not immediately provide us with a workable definition of the Cauchy transform on the support of $\mu$.  In this section, we rectify this matter by defining an operator $\Cau_{\mu}$ as a weak limit of the operators $\Cau_{\mu, \delta}$ as $\delta\rightarrow 0$.  This idea goes back to Mattila and Verdera \cite{MV}.  We fix a $C_0$-good measure $\mu$.

Note that if $f \in \Lip_0(\mathbb{C})$, then $f$ is bounded in absolute value by $\|f\|_{\Lip}\cdot \text{diam}(\supp(f))$.

Fix $f,g\in \Lip_0(\mathbb{C})$.  Then for any $\delta>0$, we may write
$$\langle \Cau_{\mu,\delta}(f),g\rangle_{\mu} = \frac{1}{2}\iint\limits_{\mathbb{C}\times\mathbb{C}} K_{\delta}(z-\xi)\bigl[f(\xi)g(z)-f(z)g(\xi)\bigl]d\mu(z)d\mu(\xi).
$$
Let $H(z,\xi) = \tfrac{1}{2}\bigl[f(\xi)g(z)-f(z)g(\xi)\bigl]$.  It will be useful to denote by $I_{\delta}(f,g)$ the expression
$$I_{\delta}(f,g) =I_{\delta, \mu}(f,g)= \iint\limits_{\mathbb{C}\times\mathbb{C}} K_{\delta}(z-\xi) H(z,\xi)d\mu(\xi)d\mu(z).
$$
Now, note that if $S = \supp(f)\cup\supp(g)$, it is clear that $\supp(H)\subset S\times S$.

In addition $H$ is Lipschitz in $\mathbb{C}^2$ with Lipschitz norm no greater than $\tfrac{1}{\sqrt{2}}\bigl(\|f\|_{\infty}\|g\|_{\Lip}+\|g\|_{{\infty}}\|f\|_{\Lip}\bigl)$.  To see this, first observe that
$|H(z,\xi) - H(\omega, \xi)|\leq \tfrac{1}{2}  \bigl(\|f\|_{\infty}\|g\|_{\Lip}+\|g\|_{\infty}\|f\|_{\Lip}\bigl)|z-\omega| $, whenever $z, \omega, \xi\in \mathbb{C}$.  By using this inequality twice, we see that $$|H(z_1,z_2) - H(\xi_1, \xi_2)|\leq \tfrac{1}{2}\bigl(\|f\|_{\infty}\|g\|_{\Lip}+\|g\|_{\infty}\|f\|_{\Lip}\bigl)[|z_1-\xi_1|+|z_2-\xi_2|], $$ and the claim follows since $|z_1-\xi_1|+|z_2-\xi_2|\leq \sqrt{2}\sqrt{|z_1-\xi_1|^2+|z_2-\xi_2|^2}$.
Since $H(z,z)=0$, this Lipschitz bound immediately yields $$|H(z,\xi)|\leq \tfrac{1}{\sqrt{2}}\bigl(\|f\|_{\infty}\|g\|_{\Lip}+\|g\|_{\infty}\|f\|_{\Lip}\bigl)|z-\xi| \text{ for any }z\neq \xi.$$

As a result of this bound on the absolute value of $H$, there exists a constant $C(f,g)>0$ such that
$$|K_{\delta}(z-\xi)||H(z,\xi)|\leq C(f,g)\chi_{S\times S}(z,\xi).$$
On the other hand, since $\mu$ is a nice measure, the set $\{(z,\xi)\in \mathbb{C}\times\mathbb{C}: \, z=\xi\}$ is $\mu\times\mu$ null, and so for $\mu\times\mu$ almost every $(z, \xi)$, the limit as $\delta\rightarrow0$ of $K_{\delta}(z-\xi)$ is equal to $K(z-\xi)$.  As a result, the Dominated Convergence Theorem applies to yield
$$\lim_{\delta\rightarrow 0}I_{\delta}(f,g) = \iint\limits_{\mathbb{C}\times\mathbb{C}} K(z-\xi)H(z,\xi)d\mu(z)d\mu(\xi).
$$
This limit will be denoted by $I(f,g)=I_{\mu}(f,g)$.  Moreover, there is a quantitative estimate on the speed of convergence:
$$|I(f,g) - I_{\delta}(f,g)|\leq \iint\limits_{\substack{ (z,\xi)\in S\times S:\\ |z-\xi|<\delta}}C(f,g) d\mu(z)d\mu(\xi) \leq C(f,g)\delta\mu(S).
$$
Since $\mu$ is $C_0$-nice, $\mu(S)$ can be bounded in terms of the diameters of the supports of $f$ and $g$, and we see that  $|I(f,g) - I_{\delta}(f,g)|\leq C(f,g)\delta$.

We have now justified the existence of an operator $\Cau_{\mu}$ acting from the space of compactly supported Lipschitz functions to its dual with respect to the standard pairing $\langle f,g\rangle_{\mu}=\int_{\mathbb{C}}fgd\mu$.  

Since $\mu$ is $C_0$-good, for any $\delta>0$ we have
\begin{equation}\label{Idelt}
|I_{\delta}(f,g)|\leq  C_0\|f\|_{L^2(\mu)}\|g\|_{L^2(\mu)},\text{ for any } f,g \in L^2(\mu),
\end{equation}
and this inequality allows us to extend the definition of $I(f,g)$ to the case when $f$ and $g$ are $L^2(\mu)$ functions.  To do this,  we first pick functions $f$ and $g$ in $L^2(\mu)$.   Let $\eps>0$.  Using the density of $\Lip_0(\Comp)$ in $L^2(\mu)$, we write $f=f_1+f_2$ and $g=g_1+g_2$, where $f_1$ and $g_1$ are compactly supported Lipschitz functions, and the norms of $f_2$ and $g_2$ in $L^2(\mu)$ are as small as we wish (say, less than $\eps$).  We know that $I_{\delta}(f_1,g_1)\rightarrow I(f_1,g_1)$ as $\delta\rightarrow 0$.  Consequently, for each $\eps>0$, $I_{\delta}(f,g)$ can be written as a sum of two terms, the first of which (namely $I_{\delta}(f_1,g_1)$) has a finite limit, and the second term (which is $I_{\delta}(f_1,g_2)+I_{\delta}(f_2,g_1)+I_{\delta}(f_2,g_2)$) has absolute value no greater than $ C_0\eps(3\eps+ \|f\|_{L^2(\mu)}+\|g\|_{L^2(\mu)})$.  It follows that the limit as $\delta\rightarrow 0$ of $I_{\delta}(f,g)$ exists.  We define this limit to be $I(f,g)=I_{\mu}(f,g)$.

From (\ref{Idelt}), we see that $|I(f,g)|\leq C_0\|f\|_{L^2(\mu)}\|g\|_{L^2(\mu)}$.  
Therefore we may apply the Riesz-Fisher theorem to deduce the existence of a (unique) bounded linear operator $\Cau_{\mu}:L^2(\mu)\rightarrow L^2(\mu)$ such that
$$\langle \Cau_{\mu}(f), g\rangle_{\mu} = I(f,g) \text{ for all }f,g\in L^2(\mu).
$$

Having defined an operator $\Cau_{\mu}$ for any good measure $\mu$, we now want to see what weak continuity properties this operator has.

\begin{defn}  We say that the sequence $\mu_k$ tends to $\mu$ weakly if, for any $\varphi\in C_0(\mathbb{C})$,
$$\int_{\mathbb{C}}\varphi \,d\mu_k \rightarrow \int_{\mathbb{C}}\varphi \,d\mu \text{ as } k\rightarrow \infty.
$$
\end{defn}

We now recall a standard weak compactness result, which can be found in Chapter 1 of \cite{Mat95} (or any other book in real analysis).

\begin{lem}\label{bookconv}  Let $\{\mu_k\}_k$ be a sequence of measures.  Suppose that for each compact set $E\subset \mathbb{C}$, $\sup_k \mu_k(E)<\infty$.  Then there exists a subsequence  $\{\mu_{k_j}\}_{k_j}$ and a measure $\mu$ such that $\mu_{k_j}$ converges to $\mu$ weakly.
\end{lem}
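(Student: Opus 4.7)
The plan is to follow the standard Helly--Banach--Alaoglu blueprint: pass to a subsequence along which the $\mu_k$ test well against a countable dense family of continuous functions, extend the convergence to all of $C_0(\mathbb{C})$ by approximation, and recover the limit measure via the Riesz representation theorem.

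First, I would exhaust the plane by closed disks $K_n = \overline{B(0,n)}$ and set $M_n := \sup_k \mu_k(K_{n+1})$, which is finite by hypothesis. For each $n$, choose a countable family $\mathcal{D}_n \subset C_0(\mathbb{C})$ of functions supported in $K_{n+1}$ which is dense in the uniform norm within $\{\varphi \in C_0(\mathbb{C}) : \supp(\varphi) \subset K_n\}$ (for example, products of a fixed smooth cutoff equal to $1$ on $K_n$ and supported in $K_{n+1}$ with polynomials in $\text{Re}\,z$ and $\text{Im}\,z$ having rational coefficients). The crucial bookkeeping bound is
\[
\Bigl| \int_{\mathbb{C}}\varphi\,d\mu_k \Bigr| \leq M_n \|\varphi\|_\infty \quad \text{whenever } \supp(\varphi) \subset K_n,
\]
uniformly in $k$.

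Next, enumerating $\bigcup_n \mathcal{D}_n$ as $\{\psi_\ell\}_{\ell \geq 1}$ and applying a Cantor diagonal argument to these uniformly bounded scalar sequences, I would extract a subsequence $\{\mu_{k_j}\}_j$ along which $\int_{\mathbb{C}}\psi_\ell\,d\mu_{k_j}$ converges for every $\ell$. To extend convergence to an arbitrary $\varphi \in C_0(\mathbb{C})$, pick $n$ with $\supp(\varphi) \subset K_n$ and approximate $\varphi$ uniformly within $\eps$ by some $\psi \in \mathcal{D}_n$; then $|\int(\varphi-\psi)\,d\mu_{k_j}| \leq \eps M_n$ for every $j$, which transfers the Cauchy property from $\psi$ to $\varphi$ and yields a limit $L(\varphi) := \lim_j \int \varphi\,d\mu_{k_j}$. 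The functional $L$ is linear and positive on $C_0(\mathbb{C})$ and satisfies $|L(\varphi)| \leq M_n \|\varphi\|_\infty$ when $\supp(\varphi) \subset K_n$, so the Riesz representation theorem provides a Radon measure $\mu$ with $L(\varphi) = \int \varphi\,d\mu$, which is exactly the asserted weak convergence.

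The only step needing genuine care is the extension to all of $C_0(\mathbb{C})$: for the uniform bound $M_n$ to be applicable to $\varphi - \psi$, the approximants $\psi$ must share a common compact support with $\varphi$, which is precisely why I arrange $\mathcal{D}_n$ to lie inside the slightly larger disk $K_{n+1}$ via the cutoff. Everything else is routine, using only the $\sigma$-compactness of $\mathbb{C}$ and the separability of $C(K_n)$ for each $n$.
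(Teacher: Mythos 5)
Your argument is correct and is precisely the standard diagonal/Riesz-representation proof that the paper does not reproduce but instead cites as a classical fact from Chapter 1 of \cite{Mat95}. The one step needing care --- keeping the approximants $\psi$ supported in a fixed compact $K_{n+1}$ so the uniform bound $M_n$ applies to $\varphi-\psi$ --- is handled properly, so there is nothing to add.
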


An immediate consequence of this lemma is that any sequence $\{\mu_k\}_k$ of $C_0$-nice measures has a subsequence that converges weakly to a measure $\mu$.  The next lemma shows that the various regularity properties of measures that we consider are inherited by weak limits.

\begin{lem}\label{wlnicegood}  Suppose that $\mu_k$ converges to $\mu$ weakly.  If each measure $\mu_k$ is $C_0$-good with $AD$ regularity constant $c_0$, then the limit measure $\mu$ is also
$C_0$-good with $AD$ regularity constant $c_0$.
\end{lem}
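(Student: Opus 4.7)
The plan is to verify each of the three defining properties of $C_0$-goodness with AD-regularity constant $c_0$ separately; only the $L^2(\mu)$-boundedness of $\Cau_{\mu,\delta}$ requires real work. \emph{Niceness and AD-regularity.} Both follow from the standard lower semi-continuity $\mu(U)\leq\liminf_k\mu_k(U)$ for open $U$, proved by approximating $\chi_U$ from below by $\varphi_n\in C_0(\mathbb{C})$ and applying monotone convergence after passing to the limit in $\int\varphi_n\,d\mu_k$. Applied to $U=B(z,r)$ this immediately gives $\mu(B(z,r))\leq C_0 r$. For AD-regularity, fix $z\in\supp(\mu)$ and $r>0$: for small $\eps>0$, $\mu(B(z,\eps))>0$ forces $\mu_k(B(z,\eps))>0$ for large $k$, so we may pick $z_k\in\supp(\mu_k)\cap B(z,\eps)$. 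For $\rho<r-\eps$, $B(z_k,\rho)\subset B(z,r)$, hence $\mu_k(B(z,r))\geq c_0\rho$. Taking $\liminf_k$ and then letting $\eps\to 0$ and $\rho\to r$ yields $\mu(B(z,r))\geq c_0 r$.

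\emph{Boundedness of $\Cau_{\mu,\delta}$ on $C_0(\mathbb{C})$.} Fix $\delta>0$ and $f,g\in C_0(\mathbb{C})$. Because $K_\delta$ is continuous everywhere (including the origin, where it vanishes), the integrand $K_\delta(z-\xi)f(\xi)g(z)$ lies in $C_0(\mathbb{C}^2)$. The weak convergence $\mu_k\to\mu$ lifts to weak convergence $\mu_k\otimes\mu_k\to\mu\otimes\mu$ on $C_0(\mathbb{C}^2)$: one first verifies convergence on tensor products $\varphi(z)\psi(\xi)$, where it is immediate, and then extends to all of $C_0(\mathbb{C}^2)$ via Stone-Weierstrass and the uniform bound on $\mu_k$ over compacts. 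Consequently $\langle\Cau_{\mu_k,\delta}(f),g\rangle_{\mu_k}\to\langle\Cau_{\mu,\delta}(f),g\rangle_\mu$. Meanwhile $\|f\|_{L^2(\mu_k)}\to\|f\|_{L^2(\mu)}$ since $|f|^2\in C_0(\mathbb{C})$, and similarly for $g$. Passing to the limit in $|\langle\Cau_{\mu_k,\delta}(f),g\rangle_{\mu_k}|\leq C_0\|f\|_{L^2(\mu_k)}\|g\|_{L^2(\mu_k)}$ gives
$$|\langle\Cau_{\mu,\delta}(f),g\rangle_\mu|\leq C_0\|f\|_{L^2(\mu)}\|g\|_{L^2(\mu)} \quad \text{for all }f,g\in C_0(\mathbb{C}).$$

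\emph{Density extension.} For $f\in L^2(\mu)$ and $g\in C_0(\mathbb{C})$, approximate $f$ by $f_n\in C_0(\mathbb{C})$ in $L^2(\mu)$; the uniform Cauchy-Schwarz estimate from the introduction shows that $\Cau_{\mu,\delta}(f_n)\to\Cau_{\mu,\delta}(f)$ uniformly on $\mathbb{C}$, so the inequality above passes to $f\in L^2(\mu)$ and $g\in C_0(\mathbb{C})$. This makes $g\mapsto\langle\Cau_{\mu,\delta}(f),g\rangle_\mu$ a bounded linear functional on the dense subspace $C_0(\mathbb{C})\subset L^2(\mu)$ of norm $\leq C_0\|f\|_{L^2(\mu)}$; Riesz representation then identifies it with pairing against an element $h\in L^2(\mu)$ of the same norm bound, and testing against $g\in C_0(\mathbb{C})$ forces $h=\Cau_{\mu,\delta}(f)$ $\mu$-a.e. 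The main obstacle is purely organizational: the steps (weak convergence of product measures, convergence of $L^2$-norms on $C_0$ integrands, and the Cauchy-Schwarz-based density extension) must be combined with care to avoid circularity, but no single step is individually deep.
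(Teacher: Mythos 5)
Your argument for $C_0$-goodness is essentially the paper's: both reduce the matter to the weak convergence $\mu_k\otimes\mu_k\to\mu\otimes\mu$ tested against a compactly supported continuous kernel, proved via Stone--Weierstrass on tensor products together with the uniform mass bound $\sup_k\mu_k(U)<\infty$ on bounded sets, followed by a density extension from test functions to $L^2(\mu)$. The only real difference is cosmetic: you test with $C_0$ functions and the un-symmetrized form $\langle\Cau_{\mu_k,\delta}(f),g\rangle_{\mu_k}$ (legitimate for fixed $\delta>0$ since $K_\delta$ is continuous and bounded), whereas the paper uses $\Lip_0$ and the antisymmetrized $I_{\delta}$; and your Riesz-representation finish is equivalent to the paper's $R\to\infty$ argument. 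That part is fine.

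There is, however, a genuine slip in the AD-regularity step. From $\mu_k(B(z,r))\geq c_0\rho$ you conclude $\mu(B(z,r))\geq c_0\rho$ by ``taking $\liminf_k$,'' but the only semicontinuity you have invoked is $\mu(U)\leq\liminf_k\mu_k(U)$ for open $U$, which points the wrong way: for open sets one cannot in general bound $\mu(U)$ from below by $\liminf_k\mu_k(U)$ (mass inside $B(z,r)$ can drift onto the boundary circle in the limit). The fix is immediate but must be said: the mass $c_0\rho$ you produced actually sits in $B(z_k,\rho)\subset\overline{B(z,\eps+\rho)}$, a fixed compact subset of $B(z,r)$, so you should either invoke the dual portmanteau inequality $\limsup_k\mu_k(K)\leq\mu(K)$ for compact $K$, or do what the paper does and integrate a continuous $\varphi$ with $\chi_{B(z,\eps+\rho)}\leq\varphi\leq\chi_{B(z,r)}$, getting $\mu(B(z,r))\geq\lim_k\int\varphi\,d\mu_k\geq c_0\rho$. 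With that one line inserted, your proof is complete.
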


\begin{proof} We shall first check that $\mu$ is AD regular.  Let $x\in \text{supp}(\mu)$, $r>0$, and choose $\eps\in (0,r/2)$.  Consider a smooth non-negative function $f$, supported in the disc $B(x,\eps)$, with $f\equiv 1$ on $B(x, \tfrac{\eps}{2})$.  Then $\int_{\mathbb{C}} fd\mu>0$.  Hence, for all sufficiently large $k$, $\int_{\mathbb{C}} f d\mu_k>0$.  For all such $k$, $B(x,\eps)\cap \supp(\mu_k)\neq \varnothing$, and so there exists $x_k\in B(x,\eps)$ satisfying $\mu_k(B(x_k, r-2\eps))\geq c_0 (r-2\eps)$.    As a result, $\mu_k(B(x,r-\eps))>c_0(r-2\eps)$.

Now let $\varphi\in C_0(\Comp)$ be nonnegative and supported in $B(x,r)$, satisfying $\|\varphi\|_{\infty}\leq 1$ and  $\varphi \equiv 1$ on $B(x, r-\eps)$.  Then
$$\mu(B(x,r))\geq \int_{\mathbb{C}} \varphi d\mu = \lim_{k\rightarrow \infty}\int_{\mathbb{C}} \varphi d\mu_k \geq c_0(r-2\eps).
$$
Letting $\eps\rightarrow 0$, we arrive at the desired AD regularity.  The property that $\mu$ is $C_0$-nice is easier and left to the reader (it also follows from standard lower-semicontinuity properties of the weak limit).

It remains to show that $\mu$ is $C_0$-good.  Fix $f,g\in \Lip_0(\mathbb{C})$ and define $H$ and $S$ as before.  Note that $K_{\delta}(z-\xi)H(z,\xi)$ is a Lipschitz function in $\mathbb{C}^2$, and has support contained in $S\times S$.  Let $U\supset S$ be an open set with $\mu(U)\leq \mu(S)+1$.  The (complex valued) Stone-Weierstrass theorem for a locally compact space tells us that that the algebra of finite linear combinations of functions in $C_0(U)\times C_0(U)$ is dense in $C_0(U\times U)$ (with respect to the uniform norm in $\Comp^2$).  Let $\eps>0$.  There are functions $\varphi_1, \dots, \varphi_n$ and $\psi_1,\dots, \psi_n$, all belonging to $C_0(U)$, such that  $|K_{\delta}(z-\xi)H(z,\xi)-\sum_{j=1}^n\varphi(z)\psi(\xi)|<\eps$ for any $(z,\xi)\in U\times U$.  For each $j=1,\dots,n$, we have
$$\lim_{k\rightarrow \infty} \iint\limits_{\mathbb{C}\times\mathbb{C}} \varphi_j(z)\psi_j(\xi)d\mu_k(z)d\mu_k(\xi) = \iint\limits_{\mathbb{C}\times\mathbb{C}} \varphi_j(z)\psi_j(\xi)d\mu(z)d\mu(\xi).
$$
It therefore follows that $$\limsup_{k\rightarrow \infty}| I_{\delta, \mu_k}(f,g) - I_{\delta, \mu}(f,g)|\leq \eps (\limsup_{k\rightarrow \infty}\mu_k(U)^2+\mu(U)^2).$$  On the other hand, $\mu_k$ is $C_0$ nice, and so $\mu_k(U)\leq C(f,g)$.  Since $\eps>0$ was arbitrary, we conclude that  $I_{\delta, \mu_k}(f,g) \rightarrow I_{\delta, \mu}(f,g)$ as $k\rightarrow \infty$.

As a result of this convergence, we have that
$$|I_{\mu, \delta}(f,g)|\leq C_0\liminf_{k\rightarrow \infty}\bigl(\|f\|_{L^2(\mu_k)}\|g\|_{L^2(\mu_k)}\bigl).
$$
But since both $|f|^2$ and $|g|^2$ are in $C_0(\mathbb{C})$, the right hand side of this inequality equals $\|f\|_{L^2(\mu)}\|g\|_{L^2(\mu)}$.  We now wish to appeal to the density of $\Lip_0(\mathbb{C})$ in $L^2(\mu)$ to extend this inequality to all $f,g\in L^2(\mu)$.  Let $R>0$.  As $\mu$ is $C_0$-nice, we saw in Section \ref{statementsec} that $\Cau_{\mu,\delta}: L^2(\mu)\rightarrow L^2(B(0,R),\mu)$.   But then, since the space of Lipschitz function compactly supported in $B(0,R)$ is dense in $L^2(B(0,R),\mu)$, we conclude that $||\Cau_{\mu,\delta}||_{L^2(\mu)\rightarrow L^2(B(0,R),\mu)}\leq C_0$. Finally, taking the limit as $R\rightarrow \infty$, the monotone convergence theorem guarantees that $\|\Cau_{\mu, \delta}\|_{L^2(\mu)\rightarrow L^2(\mu)}\leq C_0$, and hence $\mu$ is $C_0$-good.\end{proof}

The proof of the next lemma is left as an exercise.

\begin{lem}\label{ADsupp}  Suppose that $\mu_k$ is a sequence of $c_0$ $AD$-regular measures converging weakly to a measure $\mu$.  If $z_k\in \supp(\mu_k)$ with $z_k\rightarrow z$ as $k\rightarrow \infty$, then $z\in \supp(\mu)$.

\end{lem}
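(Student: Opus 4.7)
The goal is to show that every disc $B(z,r)$ with $r>0$ has positive $\mu$-measure, since this is precisely what it means for $z$ to lie in $\supp(\mu)$. The plan is to push the AD-regularity lower bound for the $\mu_k$ through the weak convergence, using a suitable bump function as a bridge.

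Fix $r>0$. Since $z_k\to z$, for all sufficiently large $k$ we have $|z_k - z|<r/4$, and hence $B(z_k, r/2)\subset B(z, 3r/4)$. The AD-regularity of $\mu_k$ combined with $z_k\in\supp(\mu_k)$ gives
$$\mu_k\bigl(B(z_k,r/2)\bigl)\geq c_0 \cdot \tfrac{r}{2}.$$
Now choose a continuous function $\varphi\in C_0(\mathbb{C})$ with $0\leq \varphi\leq 1$, $\varphi\equiv 1$ on $\overline{B(z, 3r/4)}$, and $\supp(\varphi)\subset B(z,r)$. For all large $k$, the inclusion $B(z_k,r/2)\subset B(z,3r/4)$ forces $\varphi\equiv 1$ on $B(z_k,r/2)$, so
$$\int_{\mathbb{C}}\varphi\, d\mu_k \geq \mu_k(B(z_k,r/2))\geq \tfrac{c_0 r}{2}.$$

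Weak convergence of $\mu_k$ to $\mu$ then yields
$$\int_{\mathbb{C}}\varphi\, d\mu = \lim_{k\to\infty}\int_{\mathbb{C}}\varphi\, d\mu_k \geq \tfrac{c_0 r}{2}.$$
Since $\supp(\varphi)\subset B(z,r)$ and $\varphi\leq 1$, this forces $\mu(B(z,r))\geq \tfrac{c_0 r}{2}>0$. As $r>0$ was arbitrary, $z\in\supp(\mu)$.

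There is no real obstacle here; the only point that requires a bit of care is choosing the test function $\varphi$ so that, simultaneously, $\supp(\varphi)\subset B(z,r)$ (to transfer the integral bound into a bound on $\mu(B(z,r))$) and $\varphi\equiv 1$ on $B(z_k,r/2)$ for large $k$ (to extract the lower bound from AD-regularity). The nested choice of radii $r/2<3r/4<r$, combined with $|z_k-z|<r/4$, accomplishes both at once.
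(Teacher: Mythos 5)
Your proof is correct; the paper leaves this lemma as an exercise, and your bump-function argument is exactly the intended one, mirroring the technique used in the first part of the proof of Lemma \ref{wlnicegood} to show that AD-regularity survives weak limits. The nested radii $r/2 < 3r/4 < r$ correctly handle the only delicate point, namely making the test function simultaneously dominate $\chi_{B(z_k,r/2)}$ for large $k$ and be dominated by $\chi_{B(z,r)}$.
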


Our last task is to check that the bilinear form $I_{\mu_k}$ has nice weak convergence properties. For this, let $f,g\in \Lip_0(\mathbb{C})$.  For $\delta>0$,  we write
\begin{equation}\begin{split}\nonumber|I_{\mu_k}(f,g) - I_{\mu}(f,g)| \leq &
\, |I_{\mu_k}(f,g) -I_{\delta, \mu_k}(f,g)|+|I_{\delta, \mu_k}(f,g)-I_{\delta, \mu}(f,g)| \\
&+ |I_{\delta, \mu}(f,g)- I_{\mu}(f,g)|.
\end{split}\end{equation}
The first and third terms are bounded by $C(f,g)\delta$.  The second term converges to $0$ as $k\rightarrow \infty$.  Therefore
$$\limsup_{k\rightarrow \infty}|I_{\mu_k}(f,g) - I_{\mu}(f,g)| \leq C(f,g)\delta.
$$
But $\delta>0$ was arbitrary, and so $I_{\mu_k}(f,g)$ converges to  $I_{\mu}(f,g)$ as $k\rightarrow \infty$.

\section{Riesz Systems}\label{rieszsyssec}

Throughout this section we fix a $C_0$-nice measure $\mu$.


A system of functions $\psi_Q$ $(Q\in \mathcal{D})$ is called a $C$-Riesz system if $\psi_Q \in L^2(\mu)$ for each $Q$, and
\begin{equation}\label{Rieszsys}\Bigl|\Bigl| \sum_{Q\in \mathcal{D}} a_Q \psi_Q \Bigl|\Bigl|^2_{L^2(\mu)} \leq C\sum_{Q\in \mathcal{D}} |a_Q|^2,
\end{equation}
for every sequence $\{a_Q\}_{Q\in \mathcal{D}}$.  By a simple duality argument, we see that if $\psi_Q$ is a $C$-Riesz system, then
$$\sum_{Q\in \mathcal{D}} \bigl|\langle f, \psi_Q\rangle_{\mu}\bigl|^2 \leq C \|f\|^2_{L^2(\mu)}, \text{ for any }f\in L^2(\mu).
$$

Suppose now that with each square $Q\in \mathcal{D}$, we associate a set $\Psi_Q$ of $L^2(\mu)$ functions.  We say that $\Psi_Q$ $(Q\in \mathcal{D})$ is a $C$-Riesz family if, for any choice of functions $\psi_Q \in \Psi_Q$, the system $\psi_Q$ forms a $C$-Riesz system.

We now introduce a particularly useful Riesz family.  Suppose that $\mu$ is a $C_0$-nice measure.  Fix $A>1$, and define
\begin{equation}\begin{split}\nonumber\Psi^{\mu}_{Q,A}\! \!= \!\!\Bigl\{ \psi: \supp(\psi)\!\subset B(z_Q, A\ell(Q)),& \,\|\psi\|_{\Lip}\leq \ell(Q)^{-3/2},\int_{\mathbb{C}}\! \psi \,d\mu=0\Bigl\}.
\end{split}\end{equation}

\begin{lem}  For any $A>1$, $\Psi^{\mu}_{Q,A}$ is a $C$-Riesz family, with constant $C=C(C_0, A)$.
\end{lem}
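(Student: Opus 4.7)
The strategy is to reduce the Riesz bound $\|\sum_Q a_Q\psi_Q\|_{L^2(\mu)}^2\leq C\sum_Q|a_Q|^2$ to a weighted Schur test on the self-adjoint Gram matrix $G_{QR}=\langle \psi_Q,\psi_R\rangle_\mu$. Expanding the square gives $\|\sum_Q a_Q\psi_Q\|_{L^2(\mu)}^2=\sum_{Q,R}a_Q\overline{a_R}\,G_{QR}$, so the desired inequality is equivalent to $\|G\|_{\ell^2(\mathcal{D})\to\ell^2(\mathcal{D})}\leq C(A,C_0)$, and the bulk of the work is to obtain good pointwise decay of the entries $G_{QR}$.

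First I would collect the elementary bounds on a single $\psi\in\Psi^\mu_{Q,A}$. Since $\psi$ vanishes outside $B(z_Q,A\ell(Q))$ and has Lipschitz norm at most $\ell(Q)^{-3/2}$, one gets $\|\psi\|_\infty\leq A\ell(Q)^{-1/2}$; combined with $\mu(B(z_Q,A\ell(Q)))\leq C_0A\ell(Q)$, this yields $\|\psi\|_{L^2(\mu)}\leq C(A,C_0)$, which handles the diagonal $Q=R$. For the off-diagonal entries, assume without loss of generality that $\ell(Q)\leq\ell(R)$. If the enlarged balls are disjoint then $G_{QR}=0$; otherwise $|z_Q-z_R|\leq 2A\ell(R)$. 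Here I invoke the cancellation condition $\int\psi_Q\,d\mu=0$ to write
$$G_{QR}=\int_{\mathbb{C}}\psi_Q(z)\bigl[\psi_R(z)-\psi_R(z_Q)\bigr]\,d\mu(z),$$
bound $|\psi_R(z)-\psi_R(z_Q)|\leq \ell(R)^{-3/2}\cdot A\ell(Q)$ on $\supp(\psi_Q)$, and combine with the pointwise bounds to obtain
$$|G_{QR}|\leq C(A,C_0)\Bigl(\tfrac{\ell(Q)}{\ell(R)}\Bigr)^{3/2}.$$

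I then apply Schur's lemma with the weight $w(Q)=\ell(Q)$. The dyadic squares $R$ with $\ell(R)=2^k\ell(Q)$ whose support-ball meets $B(z_Q,A\ell(Q))$ number $O(A^2)$ when $k\geq 0$ and $O(A^2\cdot 4^{|k|})$ when $k<0$. Splitting $\sum_R|G_{QR}|w(R)$ accordingly produces, for $k\geq 0$, terms of size $A^2\cdot 2^{-3k/2}\cdot 2^k\ell(Q)=A^2\,2^{-k/2}\ell(Q)$, and for $k<0$ terms of size $A^2\cdot 4^{|k|}\cdot 2^{-3|k|/2}\cdot 2^{-|k|}\ell(Q)=A^2\,2^{-|k|/2}\ell(Q)$; both form geometric series summing to $C(A,C_0)\ell(Q)=C(A,C_0)w(Q)$. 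The column-sum condition follows by symmetry $|G_{QR}|=|G_{RQ}|$, so Schur's lemma yields $\|G\|_{\ell^2\to\ell^2}\leq C(A,C_0)$, which is the required Riesz bound.

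The main obstacle is precisely the asymmetry exposed in the last step: the unweighted row sum $\sum_R|G_{QR}|$ diverges because roughly $4^{|k|}$ small squares of side $2^{-|k|}\ell(Q)$ fit into the support of $\psi_Q$ while cancellation supplies only a $2^{-3|k|/2}$ decay factor. The choice $w(Q)=\ell(Q)$ (more generally $\ell(Q)^\alpha$ for any $\alpha\in(1/2,3/2)$) is exactly what equalizes the two directions. It is also crucial that both the Lipschitz bound and the mean-zero condition enter the off-diagonal estimate: without either, the rate $(\ell_{\min}/\ell_{\max})^{3/2}$ degrades and no weight choice can make Schur's test close.
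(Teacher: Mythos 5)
Your proposal is correct and follows essentially the same route as the paper: the same pointwise and $L^1$ bounds on $\psi_Q$, the same off-diagonal estimate $|\langle\psi_Q,\psi_R\rangle_\mu|\leq C(A,C_0)(\ell(Q)/\ell(R))^{3/2}$ obtained from the mean-zero condition of the smaller function against the oscillation of the larger one, and the same counting of interacting squares at each scale. Your weighted Schur test with $w(Q)=\ell(Q)$ is exactly the paper's Cauchy--AM-GM splitting of $(\ell(Q')/\ell(Q''))^{3/2}$ into the exponents $1/2$ and $5/2$, just packaged as a statement about the Gram matrix.
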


\begin{proof}For each $Q\in \mathcal{D}$, pick a function $\psi_Q \in \Psi^{\mu}_{Q,A}$.  Then we have 
$$\|\psi_Q\|_{\infty} \leq \|\psi_{Q}\|_{\Lip}\cdot \text{diam}(\supp(\psi_Q))\leq \ell(Q)^{-3/2} \cdot 2A\ell(Q)\leq  CA\ell(Q)^{-1/2},
$$
and $$\|\psi_Q\|_{L^2(\mu)}^2\leq ||\psi_Q||_{L^{\infty}}^2\mu(B(z_Q,A\ell(Q)))\leq CA^3.$$

Now, if $Q', Q''\in \mathcal{D}$ with $\ell(Q')\leq \ell(Q'')$, then $\langle \psi_{Q'}, \psi_{Q''}\rangle_{\mu} =0$  provided that $B(z_{Q'}, A\ell(Q'))\cap B(z_{Q''}, A\ell(Q''))=\varnothing.$
If $B(z_{Q'}, A\ell(Q'))$ intersects  $B(z_{Q''}, A\ell(Q''))$, we instead have the bound
$$|\langle \psi_{Q'}, \psi_{Q''}\rangle_{\mu}|\leq CA^3 \Bigl(\frac{\ell(Q')}{\ell(Q'')}\Bigl)^{3/2}.
$$
Indeed, note that $\|\psi_{Q'}\|_{L^1(\mu)}\leq ||\psi_{Q'}||_{L^{\infty}}\mu(B(z_{Q'}, A\ell(Q')))\leq CA^2\ell(Q')^{1/2}$, while the oscillation of $\psi_{Q''}$ on the set $B(z_{Q'}, A\ell(Q'))$ (which contains the support of $\psi_{Q'}$) is no greater than $\tfrac{A\ell(Q')}{\ell(Q'')^{3/2}}$.  By multiplying these two estimates we arrive at the desired bound on the absolute value of the inner product.

Consider a sequence $\{a_Q\}_Q \in \ell^2(\mathcal{D})$.  Then
$$\Bigl|\Bigl| \sum_{Q\in \mathcal{D}} a_Q \psi_Q \Bigl|\Bigl|^2_{L^2(\mu)} \leq 2\sum_{\substack{Q', Q''\in \mathcal{D} :\\ \ell(Q')\leq \ell(Q'')}}|a_{Q'}||a_{Q''}||\langle \psi_{Q'}, \psi_{Q''}\rangle_{\mu}|.
$$
 Inserting our bounds on the inner products into this sum, we see that we need to bound the sum
$$CA^3\sum_{\substack{\ell(Q')\leq \ell(Q''), \\ B(z_{Q'}, A\ell(Q'))\cap B(z_{Q''}, A\ell(Q''))\neq \varnothing}}\!\!\!|a_{Q'}\|a_{Q''}|\Bigl(\frac{\ell(Q')}{\ell(Q'')}\Bigl)^{3/2}.
$$
(Since all sums involving squares will be taken over the lattice $\mathcal{D}$, we will not write this explicitly from now on.)  Using Cauchy's inequality, we estimate
 $$|a_{Q'}\|a_{Q''}|\Bigl(\frac{\ell(Q')}{\ell(Q'')}\Bigl)^{3/2}\leq \frac{|a_{Q'}|^2}{2}\Bigl(\frac{\ell(Q')}{\ell(Q'')}\Bigl)^{1/2}+ \frac{|a_{Q''}|^2}{2}\Bigl(\frac{\ell(Q')}{\ell(Q'')}\Bigl)^{5/2}.
 $$
It therefore suffices to estimate two double sums:
 $$
I =  \sum_{Q'} |a_{Q'}|^2 \sum_{\substack{Q'':\ell(Q')\leq \ell(Q''), \\ B(z_{Q'}, A\ell(Q'))\cap B(z_{Q''}, A\ell(Q''))\neq \varnothing}} \Bigl(\frac{\ell(Q')}{\ell(Q'')}\Bigl)^{1/2},
 $$
 and
 $$
 II = \sum_{Q''}|a_{Q''}|^2 \sum_{\substack{Q':\ell(Q')\leq \ell(Q''), \\ B(z_{Q'}, A\ell(Q'))\cap B(z_{Q''}, A\ell(Q''))\neq \varnothing}} \Bigl(\frac{\ell(Q')}{\ell(Q'')}\Bigl)^{5/2}.
 $$
 For each dyadic length $\ell$ greater than $\ell(Q')$, there are at most $CA^2$ squares $Q''$ of side length $\ell$ for which $B(z_{Q''}, A\ell)$ has non-empty intersection with $B(z_{Q'}, A\ell(Q'))$.  Hence
 $$I\leq \sum_{Q'} |a_{Q'}|^2 \sum_{k\geq 0} CA^2 2^{-k/2}\leq CA^2 \sum_Q|a_Q|^2.$$
 Concerning $II$, all the relevant squares $Q'$ in the inner sum are contained in the disc $B(z_{Q''}, 3A\ell(Q''))$.  Therefore, at scale $\ell$ there are at most $CA^2  \bigl(\frac{\ell(Q'')}{\ell}\bigl)^{2}$ squares $Q'$ of side length $\ell$ that can contribute to the inner sum.  As a result,
 $$II \leq CA^2 \sum_{Q''}|a_{Q''}|^2\sum_{k\geq 0} 2^{-k/2}\leq CA^2\sum_Q|a_Q|^2.
 $$
 Combining our bounds, we see that the $\Psi^{\mu}_{Q,A}$ is a Riesz family, with Riesz constant $C(C_0)A^5$.\end{proof}

 \section{Bad squares and uniform rectifiability}

In this section we identify a local property of the support of a measure, which ensures that the measure is uniformly rectifiable.  The mathematics in this section is largely due to David, Jones, and Semmes, see \cite{DS} Chapter 2.1, and is simpler than Jones'  geometric Traveling Salesman theory \cite{Jon}, which was used in \cite{MMV}.

Fix a $C_0$-nice measure $\mu$, which is AD-regular with regularity constant $c_0$.   Set $E=\supp(\mu)$.

\subsection{The construction of a Lipschitz mapping}

We will begin by constructing a certain graph.  For our purposes, a \textit{graph} $\Gamma=(\mathcal{N}, \mathcal{E})$ is a set of points $\mathcal{N}$ (the vertices), endowed with a collection of line segments $\mathcal{E}$ (the edges) where each segment has its end-points at vertices.  A \textit{connected component} of the graph is a maximal subset of vertices that can be connected through the edges.  For example, the graph depicted in Figure 1 below has two connected components.   

The \textit{distance between connected components} of a graph is measured as the distance between the relevant sets of vertices.  Therefore, the distance between the components of the graph depicted in Figure 1 is the distance between the vertices labeled $p$ and $q$.

\begin{figure}[h]\label{comppic}
\centering
 \includegraphics[height = 30mm]{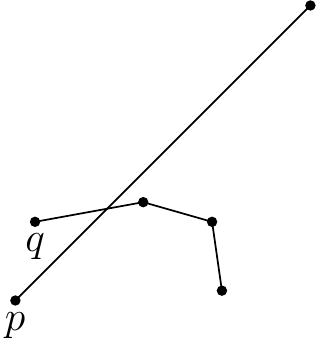}
\caption[A graph.]{An example of a graph consisting of two connected components.}
 \end{figure}


\begin{defn}For a graph $\Gamma=(\mathcal{N}, \mathcal{E})$, and a square $Q$, we define $\Gamma_Q$ to be the subgraph with vertex set $\mathcal{N}\cap 7Q$, endowed with the edges from $\mathcal{E}$ connecting those vertices in $7Q$.\end{defn}

Let $\tau \in (0,1)$.  Fix $P\in \mathcal{D}$ (this square is to be considered as the viewing window in the definition of uniform rectifiability).  Choose a (small) dyadic fraction $\ell_0$ with $\ell_0<\ell(P)$.

We shall construct a graph adapted to $P$ inductively.  Set $\mathcal{N}$ to be a maximal $\tau \ell_0$ separated subset of $E$.  Note that $\mathcal{N}$ forms a $\tau\ell_0$ net of $E$.

\textbf{\emph{The base step.}}  For each square $Q\in \mathcal{D}$ with $\ell(Q)=\ell_0$ and $3Q\cap \mathcal{N}\neq\varnothing$, fix a point which lies in $3Q\cap\mathcal{N}$.  Then join together every point of $\mathcal{N} \cap 3Q$ to this fixed point by line segments, as illustrated in the figure below.

In $3Q$, there are at most $C\tau^{-2}$ points of $\mathcal{N}$, and so the total length of the line segments in $3Q$ is $C\tau^{-2}\ell_0$.

\begin{figure}[h]\label{basesteppic}
\centering
 \includegraphics{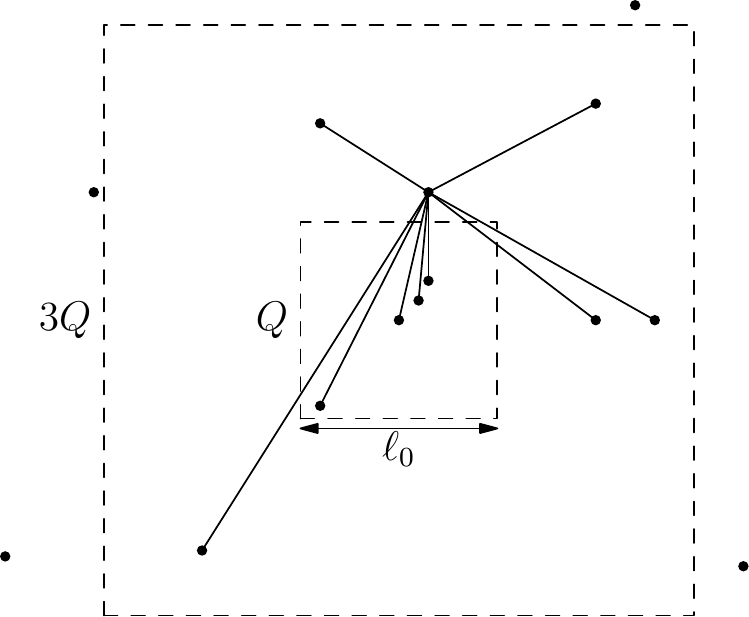}
\caption[A graph.]{The base step in the construction applied in $3Q$.}
 \end{figure}

 We thereby form the graph $\Gamma_{\ell_0}(\ell_0)$ comprised of the vertex set $\mathcal{N}$, and the set of line segments $\mathcal{E}_{\ell_0}(\ell_0)$ obtained by carrying out the above procedure for all squares $Q\in \mathcal{D}$ with $\ell(Q)=\ell_0$.


 This is the base step of the construction.


\textbf{\emph{The inductive step.}}    Let $\ell$ be a dyadic fraction no smaller than $\ell_0$.  Suppose that we have constructed the graph $\Gamma_{\ell_0}(\ell) = (\mathcal{N}, \mathcal{E}_{\ell_0}(\ell))$.

The graph $\Gamma_{\ell_0}(2\ell)$ is set to be the pair $(\mathcal{N}, \mathcal{E}_{\ell_0}(2\ell))$, where $\mathcal{E}_{\ell_0}(2\ell)$ is obtained by taking the union of $\mathcal{E}_{\ell_0}(\ell)$ with the collection of line segments obtained by performing the following algorithm:

\textit{For every square $Q\in \mathcal{D}$ with $\ell(Q)=2\ell$, consider the graph $\Gamma = (\Gamma_{\ell_0}(\ell))_Q$.
 If $\Gamma$ has at least two components that intersect $3Q$, then for each such component, choose a vertex that lies in its intersection with $\mathcal{N}\cap 3Q$.  Fix a point in $3Q\cap \mathcal{N}$, and join each of the chosen points to this fixed point with an edge}.

\begin{figure}[h]\label{comppic}
\centering
 \includegraphics{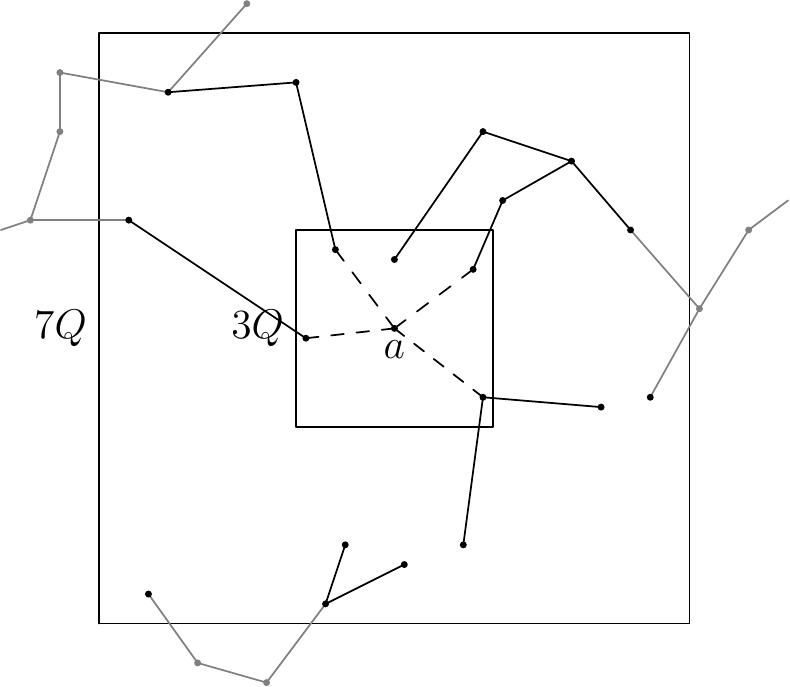}
\caption[Blah.]{The induction algorithm applied to a square $Q$.   The grey edges indicate the edges of $\Gamma_{\ell_0}(\ell)$ not included in the subgraph $\Gamma=(\Gamma_{\ell_0}(\ell))_Q$. The dashed lines indicate the edges added by applying the algorithm.  Note that in this case the graph $\Gamma$ has seven components, four of which intersect $3Q$.  The fixed point in $3Q\cap \mathcal{N}$ is denoted by $a$.}
 \end{figure}

 We carry out the inductive procedure  for $\ell=\ell_0, \dots, \tfrac{\ell(P)}{2}$, and thereby obtain the graph $\Gamma_{\ell_0}(\ell(P))$.  To continue our analysis, first note the following elementary fact:

\begin{lem}\label{dyadicfact}  Let $Q\in \mathcal{D}$ with $\ell(Q)=2\ell$.  For any two points $z_1, z_2\in 4Q$ with $|z_1-z_2|<\ell$, there is a dyadic square $Q'$ of sidelength $\ell$, such that  $7Q'\subset 7Q$, and  $z_1,z_2\in 3Q'$.\end{lem}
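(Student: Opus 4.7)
The plan is to take $Q'$ to be the unique dyadic square of side length $\ell$ containing $z_1$ (with the half-open convention of the paper, every point of $\Comp$ lies in exactly one such square), and then to verify the two required containments in turn.

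For $z_1,z_2\in 3Q'$: by construction $z_1\in Q'\subset 3Q'$, and in each coordinate $z_1$ differs from $z_{Q'}$ by at most $\ell/2$, while $z_2$ differs from $z_1$ by at most $|z_2-z_1|<\ell$. Adding these two bounds places $z_2$ within $3\ell/2$ of $z_{Q'}$ in each coordinate, so $z_2\in 3Q'$.

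For $7Q'\subset 7Q$: the key structural point is that, since $\ell(Q)=2\ell$, the grid of dyadic squares of side $\ell$ refines the grid of side $2\ell$, so the boundary of $4Q$ (a square of side $8\ell$ concentric with $Q$) lies on the grid of side $\ell$. Hence $4Q$ decomposes as the disjoint union of the $64$ dyadic squares of side $\ell$ contained in it, and since $z_1\in 4Q$ the chosen $Q'$ is one of these tiles; in particular $Q'\subset 4Q$. A direct coordinate computation then shows that any dyadic square of side $\ell$ contained in $4Q$ has its center within $7\ell/2$ of $z_Q$ in each coordinate, which is precisely the bound needed to conclude $7Q'\subset 7Q$ (the half-sides of $7Q'$ and $7Q$ being $7\ell/2$ and $7\ell$ respectively).

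The only subtlety I anticipate is keeping track of the half-open versus closed conventions for the dyadic squares and their dilates, but the strict inequality $|z_1-z_2|<\ell$ and the integer-scale alignment exploited in the second step leave enough slack that boundary ambiguities cause no trouble. There is no deep ingredient here: the lemma is purely combinatorial, and the work reduces to careful bookkeeping of the dyadic scales.
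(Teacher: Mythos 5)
Your proof is correct and follows essentially the same route as the paper: both choose $Q'$ to be the dyadic square of side $\ell$ containing $z_1$, deduce $z_2\in 3Q'$ from $|z_1-z_2|<\ell$, and get $7Q'\subset 7Q$ from the fact that $4Q$ is tiled by dyadic squares of side $\ell$ so that $Q'\subset 4Q$. Your coordinate bookkeeping is just a more explicit version of the paper's observation that the annulus $7Q\setminus 4Q$ has width $3\ell$, which matches the overhang of $7Q'$ beyond $Q'$.
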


\begin{proof} Pick the square $Q'$ to  be the dyadic square of side length $\ell$ containing $z_1$.  Then $\text{dist}(Q', \mathbb{C}\backslash 3Q') =\ell$, so $z_2\in 3Q'$.

Since $\ell(Q)=2\ell$, we have that $4Q$ is a union of dyadic squares of side-length $\ell$.  Therefore $Q'$ is contained in $4Q$.  As the square annulus $7Q\backslash 4Q$ is of width $\tfrac{3}{2}\ell(Q)=3\ell$, we conclude that $7Q'\subset 7Q$.\end{proof}

We shall use this lemma (or rather a weaker statement with $4Q$ replaced by $3Q$) to deduce the following statement.

\begin{cla}\label{compsep}  For each $\ell\geq \ell_0$, and $Q\in \mathcal{D}$ with $\ell(Q)=2\ell$, any two connected components of $(\Gamma_{\ell_0}(\ell))_Q$ which intersect $3Q$ are $\ell$-separated in $3Q$.
\end{cla}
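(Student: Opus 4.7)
The plan is to argue by contradiction. Suppose two distinct components $C_1, C_2$ of $(\Gamma_{\ell_0}(\ell))_Q$ each meet $3Q$, and choose points $z_1 \in C_1 \cap 3Q$ and $z_2 \in C_2 \cap 3Q$ with $|z_1 - z_2| < \ell$. Applying Lemma \ref{dyadicfact} to the pair $z_1, z_2 \in 3Q \subset 4Q$ yields a dyadic square $Q'$ with $\ell(Q') = \ell$, $7Q' \subset 7Q$, and $z_1, z_2 \in 3Q'$. The whole strategy is to exhibit an edge path in $\mathcal{E}_{\ell_0}(\ell)$ joining $z_1$ to $z_2$ whose vertices all lie in $7Q'$; the inclusion $7Q' \subset 7Q$ will then place the path inside $(\Gamma_{\ell_0}(\ell))_Q$, contradicting $C_1 \neq C_2$.

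I would then split the construction of this path according to whether $\ell = \ell_0$ or $\ell \geq 2\ell_0$. In the base case $\ell = \ell_0$, the square $Q'$ has side length $\ell_0$ and $\mathcal{N} \cap 3Q' \ni z_1$, so the base step of the construction connected every element of $\mathcal{N} \cap 3Q'$ to a common fixed vertex via edges lying in $3Q'$. Concatenating the two edges from that fixed vertex to $z_1$ and $z_2$ produces a path inside $3Q' \subset 7Q' \subset 7Q$.

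When $\ell \geq 2\ell_0$, I examine what the inductive step at scale $\ell/2 \to \ell$ did when applied to the square $Q'$. Let $D_1, D_2$ be the connected components of $(\Gamma_{\ell_0}(\ell/2))_{Q'}$ containing $z_1$ and $z_2$ respectively. If $D_1 = D_2$, any $(\Gamma_{\ell_0}(\ell/2))_{Q'}$-path joining $z_1$ to $z_2$ uses edges of $\mathcal{E}_{\ell_0}(\ell/2) \subset \mathcal{E}_{\ell_0}(\ell)$ with endpoints in $7Q' \subset 7Q$, and we are done. If $D_1 \neq D_2$, then both $D_1$ and $D_2$ meet $3Q'$ (since they contain $z_1, z_2 \in 3Q'$), so the algorithm actually fired at $Q'$ and added two edges inside $3Q'$ linking a chosen vertex of each $D_i \cap 3Q'$ to a common fixed vertex in $3Q' \cap \mathcal{N}$. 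Concatenating the interior $D_i$-paths (which live in $7Q'$) with these two merging edges yields the required path.

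The only real obstacle is the geometric inclusion $7Q' \subset 7Q$ supplied by Lemma \ref{dyadicfact}; without it, an edge invoked along the path could have an endpoint outside the window $7Q$ and so would not be available inside $(\Gamma_{\ell_0}(\ell))_Q$. The remainder of the argument is a bookkeeping matter about where the algorithm places its edges, namely the explicit fact that every edge added at step $\ell/2 \to \ell$ inside the processing of a square $R$ with $\ell(R) = \ell$ has both endpoints in $3R$, so that in our situation the new edges automatically stay inside $3Q' \subset 7Q' \subset 7Q$.
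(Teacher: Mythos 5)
Your argument is correct and is essentially the paper's own proof: pick two vertices of $3Q$ at distance less than $\ell$, invoke Lemma \ref{dyadicfact} to find $Q'$ with $z_1,z_2\in 3Q'$ and $7Q'\subset 7Q$, and observe that the base step (if $\ell=\ell_0$) or the inductive step applied to $Q'$ at scale $\ell/2\to\ell$ (if $\ell>\ell_0$) joins $z_1$ to $z_2$ by edges lying in $7Q'$, hence available in $(\Gamma_{\ell_0}(\ell))_Q$. Your explicit case split on whether the components $D_1,D_2$ of $(\Gamma_{\ell_0}(\ell/2))_{Q'}$ coincide, and the remark that the merging edges have both endpoints in $3Q'$, simply spell out details the paper leaves implicit.
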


\begin{proof}First suppose $\ell=\ell_0$.  Let $z_1,z_2\in 3Q$ with $|z_1-z_2|< \ell_0$. Choose $Q'$ as in Lemma \ref{dyadicfact}.  We have that $z_1,z_2\in 3Q'$, and $3Q'\subset 7Q$.  But then $z_1$ and $z_2$ must have been joined when the base step rule was applied to the square $Q'$, and so they lie in the same component of $(\Gamma_{\ell_0}(\ell))_Q$.

Now suppose that $\ell>\ell_0$, and $z_1,z _2\in3Q$ with $|z_1-z_2|<\ell$.  Again, let $Q'$ be the square of Lemma \ref{dyadicfact}.  The induction step applied at level $\tfrac{\ell}{2}$ to the square $Q'$ ensures that $z_1$ and $z_2$ are joined by edges in $\mathcal{E}_{\ell_0}(\ell)$ that are contained in $7Q'\subset 7Q$.  Therefore, $z_1$ and $z_2$ lie in the same component of $(\Gamma_{\ell_0}(\ell))_Q$.\end{proof}


\begin{cla}\label{countcla}  There exists a constant $C>0$, such that for each $\ell\geq \ell_0$, and for every $Q\in \mathcal{D}$ with $\ell(Q)=2\ell$, the iterative procedure applied to $Q$ increases the length of $\Gamma_{\ell_0}(2\ell)$ by at most $C\ell$.\end{cla}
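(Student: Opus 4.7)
The plan is to bound separately (i) the length of each new edge produced by the iterative procedure at $Q$ and (ii) the total number of such edges.

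For (i), note that every edge added at this stage has both endpoints in $3Q\cap\mathcal{N}$, so its length is at most $\operatorname{diam}(3Q)=3\sqrt{2}\,\ell(Q)=6\sqrt{2}\,\ell$. Hence each new edge contributes $O(\ell)$ to the total length.

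For (ii), the number of new edges equals the number of connected components of $\Gamma=(\Gamma_{\ell_0}(\ell))_Q$ that meet $3Q$, because the algorithm adds exactly one edge per such component (joining its chosen representative to the fixed point $a\in 3Q\cap\mathcal{N}$). I would pick the collection of chosen representatives and use Claim \ref{compsep}: any two connected components of $(\Gamma_{\ell_0}(\ell))_Q$ that intersect $3Q$ are $\ell$-separated in $3Q$. Since the representatives all lie in $3Q\cap\mathcal{N}$ and belong to pairwise distinct components, they form an $\ell$-separated subset of $3Q$. A standard volume/packing argument, placing pairwise disjoint open discs of radius $\ell/2$ around each representative and noting these discs lie inside a square of side $7\ell$, gives an absolute upper bound $N\leq C$ on the cardinality of any $\ell$-separated set in $3Q$.

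Combining (i) and (ii) yields a total added length at most $C\cdot 6\sqrt{2}\,\ell=C\ell$, as required. There is no real obstacle: the whole argument is packaged into Claim \ref{compsep}, which supplies the crucial separation needed to convert the component count into a purely metric packing estimate.
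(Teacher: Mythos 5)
Your proposal is correct and follows essentially the same route as the paper: the paper's proof also invokes Claim \ref{compsep} to bound the number of components of $(\Gamma_{\ell_0}(\ell))_Q$ meeting $3Q$ by an absolute constant and then multiplies by the maximal edge length $\sqrt{2}\cdot 6\ell$. You merely make explicit the packing argument that the paper leaves implicit.
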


\begin{proof} From Claim \ref{compsep}, we see that the graph $(\Gamma_{\ell_0}(\ell))_Q$ can have at most $C$ components which have non-empty intersection with $3Q$.  
Consequently, the application of the inductive procedure can generate at most $C$ new edges, each of which having length no greater than $\sqrt{2}\cdot 6\ell$.  The claim follows.
\end{proof}

 \textbf{\emph{Adapting the graph to $P$.}}  
We begin with another observation about the induction algorithm.  Note that any two vertices in $3P\cap\mathcal{N}$ can be joined by edges in $\mathcal{E}_{\ell_0}(\ell(P))$
that are contained in $7P$.   Thus, the graph $(\Gamma_{\ell_0}(\ell(P)))_P$ has only one connected component which intersects $3P$, and we denote this component by $\Gamma$.  Let us denote by $L = L(\ell_0)$ the total length of $\Gamma$.

By Euler's theorem, there is a walk through $\Gamma$ which visits each vertex of $\Gamma$ at least once, and travels along each edge at most twice.  By a suitable parametrization of this walk, we arrive at the following lemma:

\begin{lem}\label{walklem}  There exists $F:[0,1]\rightarrow \mathbb{C}$, with $\|F\|_{\Lip}\leq 2L$, and such that $F([0,1])\supset \mathcal{N}\cap 3P$.
\end{lem}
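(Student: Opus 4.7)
The plan is to use the standard ``double the edges'' trick to produce an Eulerian circuit and then parametrize that circuit by arc length, rescaled to $[0,1]$.

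First, I would form an auxiliary multigraph $\Gamma'$ from $\Gamma$ by replacing each edge by two parallel edges with the same endpoints. Then every vertex of $\Gamma'$ has even degree, and $\Gamma'$ remains connected because $\Gamma$ was connected (this is the point of working in the unique component of $(\Gamma_{\ell_0}(\ell(P)))_P$ meeting $3P$ established just before the lemma). By Euler's theorem, $\Gamma'$ admits an Eulerian circuit, i.e.\ a closed walk $v_0 \to v_1 \to \cdots \to v_N = v_0$ that traverses every edge of $\Gamma'$ exactly once. In particular this walk visits every vertex of $\Gamma$, and traverses each original edge of $\Gamma$ exactly twice. Its total length is therefore $2L$.

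Next I would parametrize this walk by arc length: define $G\colon [0,2L] \to \mathbb{C}$ by letting $G$ traverse each consecutive edge of the walk linearly at unit speed, in the order prescribed by the Eulerian circuit. Since each edge is a straight line segment, along the interior of each edge $G$ moves at unit speed, and at the junctions the walk continues from the endpoint just reached; thus $|G(s)-G(t)|\le |s-t|$ for all $s,t\in[0,2L]$, i.e.\ $\|G\|_{\Lip}\le 1$. Moreover, by construction $G([0,2L])$ is the union of all edges of $\Gamma$ and in particular contains every vertex of $\Gamma$, hence contains $\mathcal{N}\cap 3P$.

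Finally, set $F(t) = G(2Lt)$ for $t\in[0,1]$. The rescaling gives $\|F\|_{\Lip} \le 2L\|G\|_{\Lip}\le 2L$, while $F([0,1]) = G([0,2L]) \supset \mathcal{N}\cap 3P$, as required.

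The only mildly non-routine point is ensuring that the walk produced by Euler's theorem really does use each original edge of $\Gamma$ at most twice and visits each vertex of $\Gamma$ at least once; the edge-doubling reduction handles the first by construction and the connectedness of $\Gamma'$ handles the second. Everything else is a straightforward reparametrization argument.
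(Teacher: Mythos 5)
Your proof is correct and follows essentially the same route as the paper, which invokes Euler's theorem to obtain a walk traversing each edge at most twice and visiting every vertex, then parametrizes it suitably. The edge-doubling construction and the arc-length parametrization with rescaling are exactly the "suitable parametrization" the paper has in mind, and you correctly note that the connectedness of the component $\Gamma$ (containing all of $\mathcal{N}\cap 3P$) is what makes the Eulerian circuit argument work.
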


If we have a suitable control over $L(\ell_0)$ independently of $\ell_0$, then $E\cap P$ is contained in the image of a Lipschitz graph:

 \begin{lem}\label{lipfun}  Suppose that there exists $M>0$ such that $L(\ell_0)\leq M\ell(P)$ for every $\ell_0>0$.  Then there exists $F:[0,1]\rightarrow \mathbb{C}$, such that $\|F\|_{\Lip}\leq 2M\ell(P)$, and $E\cap P \subset F([0,1])$.
 \end{lem}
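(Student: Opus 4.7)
The plan is to apply the Arzel\`a--Ascoli theorem to the family of Lipschitz maps supplied by Lemma \ref{walklem} as $\ell_0$ ranges over all dyadic fractions smaller than $\ell(P)$, and then extract a uniform limit that works simultaneously for every point of $E\cap P$.

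For each dyadic fraction $\ell_0\in(0,\ell(P))$, Lemma \ref{walklem} provides a map $F_{\ell_0}:[0,1]\to\mathbb{C}$ with $\|F_{\ell_0}\|_{\Lip}\leq 2L(\ell_0)\leq 2M\ell(P)$ whose image contains $\mathcal{N}_{\ell_0}\cap 3P$, where $\mathcal{N}_{\ell_0}$ denotes the maximal $\tau\ell_0$-separated net of $E$ chosen in the base step. Because every edge of the subgraph $\Gamma=(\Gamma_{\ell_0}(\ell(P)))_P$ joins two vertices lying in $\mathcal{N}_{\ell_0}\cap 7P$, and $7P$ is convex, the image $F_{\ell_0}([0,1])$ is contained in the bounded set $7P$. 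The family $\{F_{\ell_0}\}$ is therefore equicontinuous (with common Lipschitz constant $2M\ell(P)$) and uniformly bounded, and Arzel\`a--Ascoli yields a sequence $\ell_{0,k}\to 0$ along which $F_{\ell_{0,k}}$ converges uniformly to some $F:[0,1]\to\mathbb{C}$. A uniform limit of $K$-Lipschitz maps is $K$-Lipschitz, so $\|F\|_{\Lip}\leq 2M\ell(P)$.

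To verify that $E\cap P\subset F([0,1])$, fix $z\in E\cap P$. Since $\mathcal{N}_{\ell_{0,k}}$ is a $\tau\ell_{0,k}$-net of $E$, there is $z_k\in\mathcal{N}_{\ell_{0,k}}$ with $|z-z_k|\leq \tau\ell_{0,k}$; for all sufficiently large $k$ this forces $z_k\in 3P$, and Lemma \ref{walklem} produces a $t_k\in[0,1]$ with $F_{\ell_{0,k}}(t_k)=z_k$. Passing to a further subsequence (depending on $z$) we may assume $t_k\to t\in[0,1]$, and then uniform convergence $F_{\ell_{0,k}}\to F$ together with $z_k\to z$ gives $F(t)=z$.

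The argument is essentially a pure compactness exercise, so there is no serious obstacle. The only point that deserves care is the uniform boundedness of the family $\{F_{\ell_0}\}$; this is precisely where the definition $\Gamma_Q=\Gamma\cap 7Q$ pays off, since it guarantees that the entire walk stays inside the convex set $7P$ regardless of how small $\ell_0$ is.
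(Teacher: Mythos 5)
Your proposal is correct and follows essentially the same route as the paper: apply Arzel\`a--Ascoli to the maps $F_{\ell_0}$ from Lemma \ref{walklem}, pass to a uniform limit $F$ (which inherits the Lipschitz bound), and then for each $z\in E\cap P$ use the net property to find preimages $t_k$ whose limit $t$ satisfies $F(t)=z$. Your additional observation that the walks stay inside $7P$, which gives the uniform boundedness needed for Arzel\`a--Ascoli, is a point the paper leaves implicit but is entirely consistent with its argument.
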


 \begin{proof}  

 Let $\ell_0 = 2^{-k}$.   Let $F_k$ denote the function of Lemma \ref{walklem}.  
 Then $F_k([0,1])$ is a $\tau 2^{-k}$-net of $E\cap P$.

By appealing to the Arzela-Ascoli theorem, we see that there is a subsequence of the $F_k$ (which we again denote  by $F_k$), converging uniformly to some limit function $F$.  The function $F$ is Lipschitz continuous with Lipschitz norm no greater than $2M\ell(P)$.

 Now, for any $x\in E\cap P$, there exists a sequence $\{x_k\}_k$ where $x_k\in F_k([0,1])$, and $|x-x_k|<\tau 2^{-k}$.  Take $t_k\in [0,1]$ with $F_k(t_k)=x_k$.  There is a convergent subsequence of $\{t_k\}_k$ which converges to some $t\in [0,1]$.  But then $F(t)=x$, and the proof is complete.
 \end{proof}

We shall now estimate $L(\ell_0)$ in terms of the total side length of squares where the induction step has been carried out.  Note that only the base and inductive steps applied to the dyadic squares $Q$ contained in $7P$ can contribute to the length. 

 We shall first estimate the contribution to the length by the base step.

 \begin{cla}\label{basesteplength}  The contribution to the length of $\Gamma$ from the base step is no greater than $ C\tau^{-2}\ell(P)$.
\end{cla}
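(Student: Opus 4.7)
The plan is to bound two quantities separately and then multiply: the number of dyadic squares $Q$ with $\ell(Q)=\ell_0$ where the base step contributes an edge to $\Gamma$, and the length contributed in each such square.

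The per-square bound has essentially been observed in the paragraph defining the base step: in $Q$ the algorithm adds at most $|\mathcal{N}\cap 3Q|$ line segments, each of length at most $\text{diam}(3Q)\leq 3\sqrt{2}\ell_0$. Since $\mathcal{N}$ is $\tau\ell_0$-separated, a disjointness argument for the balls $\{B(x,\tau\ell_0/2):x\in\mathcal{N}\cap 3Q\}$ (all of which sit inside a fixed disc of radius $O(\ell_0)$ about $z_Q$) yields $|\mathcal{N}\cap 3Q|\leq C\tau^{-2}$, so the total length contributed by any single $Q$ is at most $C\tau^{-2}\ell_0$.

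To count the \emph{active} squares --- those with $\ell(Q)=\ell_0$ and $3Q\cap\mathcal{N}\neq\varnothing$ that actually contribute an edge to $\Gamma=(\Gamma_{\ell_0}(\ell(P)))_P$ --- I will use AD-regularity. Any base-step edge lies in $3Q$, so for it to belong to $\Gamma$ one needs $3Q\cap 7P\neq\varnothing$ (so $z_Q\in B(z_P,C\ell(P))$). For each such $Q$ pick $x_Q\in 3Q\cap\supp(\mu)$; then $B(x_Q,\ell_0/4)\subset B(z_Q,C\ell_0)$ carries $\mu$-mass at least $c_0\ell_0/4$ by AD-regularity. Now the centers $z_Q$ form an $\ell_0$-separated sublattice, so the family $\{B(z_Q,C\ell_0)\}$ has overlap multiplicity bounded by an absolute constant, and its union lies in a disc of radius $O(\ell(P))$ about $z_P$, whose $\mu$-mass is at most $CC_0\ell(P)$ by niceness. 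Summing $\mu(B(z_Q,C\ell_0))\gtrsim\ell_0$ over active $Q$'s therefore gives at most $C\ell(P)/\ell_0$ active squares.

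Multiplying the per-square length $C\tau^{-2}\ell_0$ by the count $C\ell(P)/\ell_0$ produces the desired bound $C\tau^{-2}\ell(P)$. There is no substantive conceptual obstacle here --- the argument is essentially bookkeeping --- but the role of AD-regularity is essential: without a lower density bound on $\mu$, the number of relevant dyadic cells near $7P$ could be as large as $(\ell(P)/\ell_0)^2$, and the corresponding estimate would blow up as $\ell_0\to 0$.
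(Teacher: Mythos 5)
Your proof is correct and follows essentially the same route as the paper's: the per-square bound $C\tau^{-2}\ell_0$ is exactly the observation made when the base step is defined, and your count of $C\ell(P)/\ell_0$ active squares via AD-regularity (a lower mass bound of order $c_0\ell_0$ near each active cell, bounded overlap, and the niceness bound $\mu(B(z_P,C\ell(P)))\leq C\ell(P)$) is precisely the paper's argument. No gaps.
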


\begin{proof}Let $N$ denote the number of dyadic sub-squares of $7P$ with side length $\ell_0$ where the base step has been carried out.
 For any such square $Q$, we must have $3Q\cap \supp(\mu) \neq \varnothing$.  From the AD-regularity of $\mu$, it follows that $\mu(4Q)\geq c_0\ell(Q) = c_0\ell_0$.  Hence
 $$c_0\ell_0 N \leq \sum_{Q\in \mathcal{D}:\,Q\subset 7P , \,\ell(Q) = \ell_0}\mu(4Q) \leq C\mu(C P)\leq C\ell(P),$$
 and therefore $N\leq C\tfrac{\ell(P)}{\ell_0}$.    Consequently, the contribution to the length of $\Gamma$ from the base step is no greater than $C\tau^{-2}\ell_0 \tfrac{\ell(P)}{\ell_0}$, as required.\end{proof}

We now denote by $\mathcal{Q}(P,\ell_0)$ the collection of dyadic squares $Q\in \mathcal{D}$ such that $ \ell(Q)\in[\ell_0, \ell(P)]$, $Q\subset 7P$, and the inductive step has been carried out non-vacuously in $Q$ at scale $\tfrac{\ell(Q)}{2}$.

Claim \ref{countcla} guarantees that for each $Q\in \mathcal{Q}(P, \ell_0)$, an application of the inductive procedure increases the length $L(\ell_0)$ by no more than $C\ell(Q)$.  Combining this observation with Claim \ref{basesteplength}, we infer the following bound:
\begin{equation}\label{lbound}L(\ell_0) \leq C\tau^{-2}\ell(P) + C\sum_{Q\in \mathcal{Q}(P,\ell_0)} \ell(Q).
 \end{equation}

 \subsection{Bad squares}  Given the construction above, we would like to find a convenient way of identifying whether a square has been used in the inductive procedure at some scale.  Since we don't want these squares to occur very often, we call them \textit{bad squares}.

 \begin{defn}  We say that $Q\in \mathcal{D}$ is a $(\mu)$-bad square if there exist $\zeta, \xi \in B(z_Q, 10\ell(Q))\cap \supp(\mu)$, such that $|\zeta-\xi|\geq \ell(Q)/2$, and there exists $z\in[\zeta,\xi]$ such that $B(z, \tau\ell(Q))\cap E=\varnothing$.
 \end{defn}

 We now justify the use of this definition:

 \begin{lem}  Suppose that $\tau< \tfrac{1}{16}$.  
 Suppose that the inductive algorithm has been applied to $Q\in \mathcal{D}$.  Then $Q$ is a bad square.
 \end{lem}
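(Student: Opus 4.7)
The plan is to prove the contrapositive directly. The hypothesis that the inductive algorithm has been applied to $Q$ means that the step acted non-vacuously, i.e.\ for $\ell=\ell(Q)/2$ the subgraph $(\Gamma_{\ell_0}(\ell))_Q$ has at least two connected components intersecting $3Q$. Choose vertices $\zeta\in\mathcal{N}\cap 3Q$ and $\xi\in\mathcal{N}\cap 3Q$ lying in two different such components. Both points belong to $\supp(\mu)\cap B(z_Q,10\ell(Q))$ (the diameter of $3Q$ is $3\sqrt{2}\,\ell(Q)<10\ell(Q)$), and Claim \ref{compsep} forces $|\zeta-\xi|\geq \ell=\ell(Q)/2$. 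The aim is to exhibit a point $z\in[\zeta,\xi]$ with $B(z,\tau\ell(Q))\cap E=\varnothing$, which then certifies that $Q$ is bad.

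Suppose for contradiction that every $z\in[\zeta,\xi]$ satisfies $B(z,\tau\ell(Q))\cap E\neq\varnothing$. Discretize the segment by $z_i=\zeta+\tfrac{i}{n}(\xi-\zeta)$ for $i=0,\dots,n$. For each intermediate $i$, pick $e_i\in B(z_i,\tau\ell(Q))\cap E$, and then use the $\tau\ell_0$-net property of $\mathcal{N}$ to select $v_i\in\mathcal{N}$ with $|v_i-e_i|\leq\tau\ell_0\leq\tau\ell(Q)/2$; set $v_0=\zeta$ and $v_n=\xi$. Then $|v_i-z_i|<\tfrac{3\tau}{2}\ell(Q)$, which is less than $\ell(Q)/2$ since $\tau<\tfrac{1}{16}$, so every $v_i$ lies in $4Q$. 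By the triangle inequality $|v_i-v_{i+1}|<3\tau\ell(Q)+|\xi-\zeta|/n$, and taking $n$ large enough makes this less than $\ell$ (again using $\tau<\tfrac{1}{16}$).

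Lemma \ref{dyadicfact} then yields, for each consecutive pair $v_i,v_{i+1}\in 4Q$ with $|v_i-v_{i+1}|<\ell$, a dyadic square $Q_i'$ with $\ell(Q_i')=\ell$, $v_i,v_{i+1}\in 3Q_i'$, and $7Q_i'\subset 7Q$. The very argument used to prove Claim \ref{compsep}, applied now with $Q_i'$ in place of $Q$, shows that $v_i$ and $v_{i+1}$ are joined by edges of $\mathcal{E}_{\ell_0}(\ell)$ lying entirely in $7Q_i'\subset 7Q$ (via the base step if $\ell=\ell_0$, and via the inductive step at level $\ell/2$ applied to $Q_i'$ otherwise). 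Hence they lie in the same component of $(\Gamma_{\ell_0}(\ell))_Q$. Chaining across $i=0,\dots,n-1$ places $\zeta$ and $\xi$ in the same component, contradicting their choice.

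The main obstacle is that the intermediate chain vertices $v_i$ may drift outside the strict $3Q$ window in which Claim \ref{compsep} is phrased. The condition $\tau<\tfrac{1}{16}$ is precisely what keeps the drift bounded by $\ell(Q)/2$ so the chain stays inside $4Q$; Lemma \ref{dyadicfact}, formulated with the $4Q$ window, then supplies the localizing sub-squares $Q_i'$ needed to transport connectivity at scale $\ell$ back into $(\Gamma_{\ell_0}(\ell))_Q$.
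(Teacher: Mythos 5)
Your proposal is correct and follows essentially the same route as the paper: assume for contradiction that every disc of radius $\tau\ell(Q)$ centred on $[\zeta,\xi]$ meets $E$, build from the net $\mathcal{N}$ a chain of vertices in $4Q$ with consecutive gaps below $\ell(Q)/2$, and use Lemma \ref{dyadicfact} together with the connectivity property behind Claim \ref{compsep} to place $\zeta$ and $\xi$ in the same component of $(\Gamma_{\ell_0}(\ell))_Q$, a contradiction. The only cosmetic difference is that you discretize the segment with $n$ points and let $n$ grow, whereas the paper covers the segment with overlapping discs; the arithmetic with $\tau<\tfrac{1}{16}$ is the same.
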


\begin{proof}  

If the inductive algorithm has been applied, then there is a graph $\Gamma=(\mathcal{N},\mathcal{E})$\footnote{In the notation of the previous section, $\Gamma = \Gamma_{\ell_0}\bigl(\tfrac{\ell(Q)}{2}\bigl)$, for some $\ell_0\leq \tfrac{\ell(Q)}{2}$.}, with the following properties:
\begin{enumerate}
\item The set $\mathcal{N}$ forms a $\tfrac{\tau\ell(Q)}{2}$ net of $E$.
\item For every dyadic square $Q'$ with $\ell(Q')<\ell(Q)$ and $7Q'\subset 7Q$, we have that if $z_1,z_2\in 3Q'\cap \mathcal{N}$, then $z_1$ and $z_2$ lie in the same component of $\Gamma_Q$.
\item The connected components of $\Gamma_Q$ that intersect $3Q$ are at least $\tfrac{\ell(Q)}{2}$ separated in $3Q$.
\end{enumerate}
(In fact, property (2) implies property (3), as was seen in Claim \ref{compsep}).

By assumption, there exist two points $\zeta$ and $\xi$ in $3Q\cap \mathcal{N}$ that lie in different components of $\Gamma_Q$. Then $|\zeta-\xi|\geq\tfrac{\ell(Q)}{2}$. Consider the line segment $[\zeta,\xi]$.    Cover this segment with overlapping discs of radius $\tau\ell(Q)$, such that the centre of each disc lies in the line segment $[\zeta, \xi]$  (see Figure 4).

\begin{figure}[t]\label{BadSquarePic}
\centering
 \includegraphics[width=120mm]{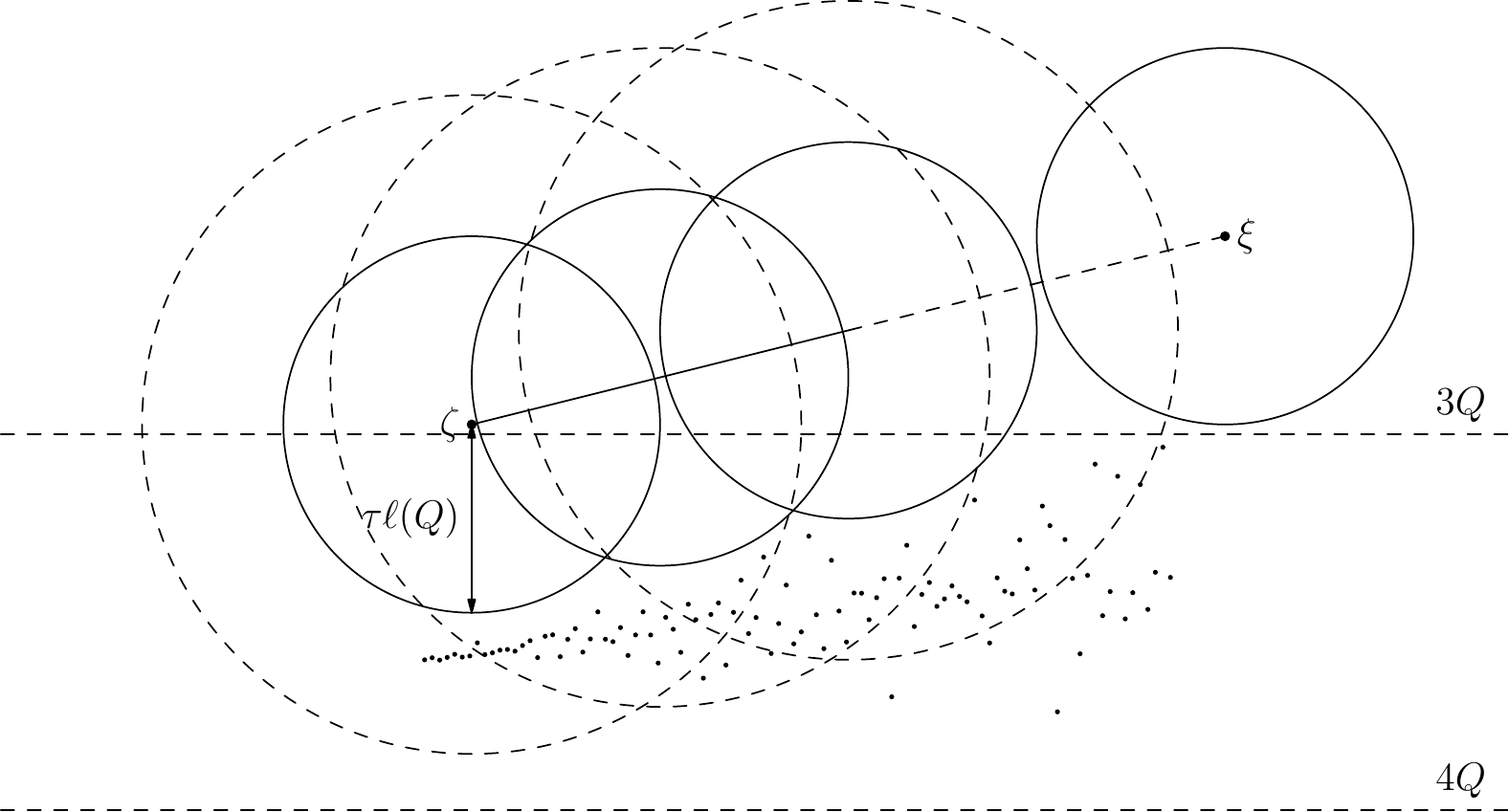}
\caption[The discs.]{The intersecting discs of radius $\tau\ell(Q)$, along with their concentric doubles.  The cloud of points represents those points of $\mathcal{N}$.}
 \end{figure}

 Suppose that every disc has positive $\mu$ measure.  If $\tau< \tfrac{1}{16}$, the concentric double of each disc is contained in $4Q$.  Furthermore, in the concentric double of each disc, there must be a point from $\mathcal{N}$.  We therefore form a chain of points in $\mathcal{N}\cap 4Q$, with every consecutive pair of points in the chain are separated by a distance of at most $8\tau\ell(Q)<\ell(Q)/2$.  Furthermore, the first point in the chain is within a distance of $\ell(Q)/2$ of  $\zeta$, and the last point in the chain is no further than $\ell(Q)/2$ from $\xi$.  Therefore,  Lemma \ref{dyadicfact} ensures that each consecutive pair of points in the chain are contained in the concentric triple of some dyadic square $Q'$ with $\ell(Q')<\ell(Q)$ and $7Q'\subset 7Q$.  But then property (2) yields that each such pair lies in the same component of $\Gamma_Q$.  As a result, $\zeta$ and $\xi$ also lie in the same component of $\Gamma_Q$.

 From this contradiction, we see that one of the discs of radius $\tau\ell(Q)$ has zero measure, which implies that $Q$ is a bad square.  \end{proof}

 Now let $\mathcal{B}^{\mu}$ denote the set of those squares $Q\in \mathcal{D}$ that are bad.  To prove Theorem \ref{thm1}, it suffices to prove the following proposition:

 \begin{prop} \label{carlreduction} Suppose that $\mu$ is a $C_0$-good measure with AD regularity constant $c_0$.  There is a constant $C=C(A,C_0,c_0)>0$ such that for each $P\in \mathcal{D}$,
 \begin{equation}\label{carl}\sum_{Q\in \mathcal{B}^{\mu}, \,Q\subset P} \ell(Q) \leq C\ell(P).
 \end{equation}
 \end{prop}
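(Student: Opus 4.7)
My plan is to attach to each bad square $Q$ a single test function $\psi_Q\in\Psi^\mu_{Q,A}$ (for a fixed absolute $A>1$), together with a $P$-dependent probe $f_P$, so that the Cauchy-transform pairing $\langle\Cau_\mu f_P,\psi_Q\rangle_\mu$ extracts a definite amount of $\ell(Q)$ from every bad $Q\subset P$.  Granted the key lower bound
\[
|\langle\Cau_\mu f_P,\psi_Q\rangle_\mu|^2\;\geq\;c(\tau,C_0,c_0)\,\ell(Q)\qquad(Q\in\mathcal{B}^\mu,\ Q\subset P),
\]
the Carleson estimate (\ref{carl}) is immediate:  the Riesz-family inequality for $\Psi^\mu_{Q,A}$ combined with $C_0$-goodness gives
\[
\sum_{Q\in\mathcal{B}^\mu,\,Q\subset P}|\langle\Cau_\mu f_P,\psi_Q\rangle_\mu|^2\;\leq\;C(A,C_0)\,\|\Cau_\mu f_P\|_{L^2(\mu)}^2\;\leq\;C(A,C_0)^2\|f_P\|_{L^2(\mu)}^2,
\]
and choosing $f_P$ to be a smooth cut-off of $B(z_P,R\ell(P))$ makes $\|f_P\|_{L^2(\mu)}^2\leq C(R,C_0)\ell(P)$ by $C_0$-niceness.

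\textbf{Construction of $\psi_Q$.}  The definition of $\mathcal{B}^\mu$ supplies the needed geometry: there exist $\zeta,\xi\in\supp(\mu)\cap B(z_Q,10\ell(Q))$ with $|\zeta-\xi|\geq\ell(Q)/2$ and a point $z_0\in[\zeta,\xi]$ with $B(z_0,\tau\ell(Q))\cap\supp(\mu)=\varnothing$.  Let $\eta_1,\eta_2$ be smooth bumps of height of order $\ell(Q)^{-1/2}$ supported in discs of radius of order $\tau\ell(Q)$ around $\zeta$ and $\xi$ respectively, scaled so that their Lipschitz norm fits the $\Psi^\mu_{Q,A}$ allowance with, say, $A=20$.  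By AD-regularity, $\int\eta_i\,d\mu$ is bounded below by a definite quantity depending only on $c_0$ and $\tau$, so one may form a mean-zero dipole $\psi_Q=\alpha_1\eta_1-\alpha_2\eta_2$ with weights $\alpha_1,\alpha_2>0$ of comparable size; then $\psi_Q\in\Psi^\mu_{Q,A}$, and the two lobes of $\psi_Q$ live on opposite sides of the hole.

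\textbf{Main obstacle — the lower bound.}  Producing the quantitative inequality on $|\langle\Cau_\mu f_P,\psi_Q\rangle_\mu|$ from the mere presence of a $\tau\ell(Q)$-hole is where the real work lies.  Heuristically, if $\supp(\mu)$ looked like a straight line across $Q$ the dipole pairing would nearly cancel, whereas the hole breaks this symmetry and forces a one-sided contribution.  The route I would take is compactness and contradiction: should the lower bound fail along some sequence of bad squares $Q_k$ inside $C_0$-good, $c_0$-AD-regular measures $\mu_k$, rescale each $Q_k$ to unit size and recentre, then invoke Lemma \ref{bookconv} to pass to a weak limit $\mu_\infty$.  Lemma \ref{wlnicegood} ensures $\mu_\infty$ is still $C_0$-good and $c_0$-AD-regular, Lemma \ref{ADsupp} propagates the hole together with the two macroscopic lobes of support, and the weak continuity of $I_\mu$ established at the end of Section \ref{weaklims} shows that the limit pairing $\langle\Cau_{\mu_\infty}f_\infty,\psi_\infty\rangle_{\mu_\infty}$ vanishes for the natural limiting probe and dipole.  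Thus $\mu_\infty$ would be a \emph{reflectionless} measure in the sense alluded to in the introduction whose support nevertheless exhibits a macroscopic bad configuration; the classification of reflectionless measures, to be developed in the subsequent sections of the paper, forbids this and yields the desired contradiction.  Extracting the quantitative lower bound through this compactness-to-reflectionless-measures step is the principal obstacle in the proof.
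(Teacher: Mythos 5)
Your overall architecture --- an $\ell^2$ upper bound coming from the Riesz family $\Psi^{\mu}_{Q,A}$ together with $C_0$-goodness, plus a scale-invariant lower bound on bad squares proved by rescaling, weak compactness, and a classification of reflectionless measures --- is exactly the paper's. The upper-bound half of your argument (Riesz system plus $\|\Cau_{\mu}f_P\|_{L^2(\mu)}^2\leq C\|f_P\|_{L^2(\mu)}^2\leq C\ell(P)$) is correct as written.

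The gap is in how you formulate the key lower bound. You fix, for each bad $Q$, a \emph{single} dipole $\psi_Q$ and a \emph{single} probe $f_P$, and claim $|\langle\Cau_\mu f_P,\psi_Q\rangle_\mu|^2\geq c\,\ell(Q)$. Your own compactness scheme cannot deliver this: if the bound fails along a sequence $(\mu_k,P_k,Q_k)$, then after rescaling and passing to the weak limit you obtain only the single identity $\langle\Cau_{\mu_\infty}(f_\infty),\psi_\infty\rangle_{\mu_\infty}=0$ for one particular test function $\psi_\infty$ --- not the statement that the pairing vanishes for \emph{every} compactly supported mean-zero Lipschitz function, which is what ``reflectionless'' means and what the classification $\mu=c\,\mathcal{H}^1_L$ takes as input. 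A single orthogonality relation is perfectly consistent with a measure having a hole and two separated lobes of support, so no contradiction arises. (It is also far from clear that the fixed-dipole bound is even true: nothing prevents $\Cau_\mu f_P$ from having equal weighted averages over your two bumps for some bad square.) The paper avoids this by bounding from below $\Theta_{A,A'}(Q)=\inf_{F\supset B(z_Q,A'\ell(Q))}\sup_{\psi\in\Psi^{\mu}_{Q,A}}\ell(Q)^{-1/2}|\langle\Cau_\mu(\chi_F),\psi\rangle_\mu|$; the supremum over $\psi$ is precisely what converts failure along a sequence into full reflectionlessness of the limit measure. A second, related defect concerns the probe: $f_P$ is a cutoff at scale $R\ell(P)$, and since $\ell(P)/\ell(Q)$ may remain bounded along the bad sequence (e.g.\ $Q=P$), the rescaled probes need not converge to the constant function $1$; the limiting identity would then involve $\Cau_{\mu_\infty}(f_\infty)$ for a compactly supported $f_\infty$ rather than $\widetilde{\Cau}_{\mu_\infty}(1)$. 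The infimum over all sets $F\supset B(z_Q,A'\ell(Q))$ in $\Theta_{A,A'}$, followed by letting $A'\to\infty$ in the contradiction argument, is what forces the probe to become $1$ in the limit. With these two modifications your plan becomes the paper's proof.
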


 Let us see how Theorem \ref{thm1} follows from this proposition.  Fix $P\in \mathcal{D}$, and construct $\Gamma_{\ell_0}(\ell(P))$ for $\ell_0<\ell(P)$.  From Proposition \ref{carlreduction}, the bound (\ref{lbound}) for the length $L(\ell_0)$ is no more than $M\ell(P)$, where $M$ can be chosen to depend on $A,c_0, C_0$, and $\tau$ (in particular, $M$ can be chosen independently of $P$).  But now Lemma \ref{lipfun} yields the existence of a function $F:[0,1]\rightarrow \mathbb{C}$ with Lipschitz norm no greater than $M\ell(P)$, such that $E\cap P\subset F([0,1])$.  This is the required uniform rectifiability.


 The condition (\ref{carl}) is very well known in harmonic analysis, and a family of squares $\mathcal{B}^{\mu}$ satisfying (\ref{carl}) is often referred to as a \textit{Carleson family}.  The best constant $C>0$ such that (\ref{carl}) holds for all $P\in \mathcal{D}$ is called the \textit{Carleson norm} of $\mathcal{B}^{\mu}$.

 \section{Bad squares and the Riesz family $\{\Psi^{\mu}_{Q,A}\}_Q$}

Fix a $C_0$-good measure $\mu$ with AD-regularity constant $c_0>0$.

Choose $A'>1$, with $A'\geq A$. Recall the Riesz family $\Psi^{\mu}_{Q,A}$ introduced in Section 5.  For each $Q\in \mathcal{D}$, we define
 $$\Theta_{A,A'}(Q)=\Theta_{A,A'}^{\mu}(Q) = \inf_{F\supset B(z_Q, A'\ell(Q))} \sup_{\psi\in\Psi^{\mu}_{Q,A}} \ell(Q)^{-1/2}|\langle \Cau_{\mu}(\chi_F), \psi\rangle_{\mu}|.
 $$
Consider a fixed $P\in \mathcal{D}$.  Then for each $Q\subset P$ there exists a function $\psi_Q\in\Psi^{\mu}_{Q,A}$ such that
$\Theta_{A,A'}(Q)^2\ell(Q) \leq 2 |\langle \Cau_{\mu}(\chi_{B(z_P,2 A'\ell(P))}), \psi_Q\rangle_{\mu}|^2 $ (note here that $B(z_Q, A'\ell(Q))\subset B(z_P, 2A'\ell(P))$ whenever $Q\subset P$).  Hence
$$\sum_{Q\subset P} \Theta_{A,A'}(Q)^2\ell(Q) \leq 2\sum_{Q\subset P} |\langle \Cau_{\mu}(\chi_{B(z_P, 2A'\ell(P))}), \psi_Q\rangle_{\mu}|^2.
$$
Since $\psi_Q$ ($Q\in \mathcal{D}$) forms a $C(C_0,A)$-Riesz system, the right hand side of this inequality is bounded by $C(C_0,A)\|\Cau_{\mu}(\chi_{B(z_P, 2A'\ell(P))})\|_{L^2(\mu)}^2$.  As $\mu$ is $C_0$-good, this quantity is in turn bounded by $C(C_0,A)\mu(B(z_P, 2A'\ell(P)))$, which is at most $ C(C_0,A,A')\ell(P)$.  Therefore
$$\sum_{Q\subset P}\Theta_{A,A'}(Q)^2\ell(Q)\leq C(C_0, A,A')\ell(P).
$$
As an immediate corollary of this discussion, we arrive at the following result:
\begin{lem}  Let $\gamma>0$.  Consider the set $\mathcal{F}_{\gamma}$ of dyadic squares $Q$ satisfying $\Theta_{A,A'}(Q)>\gamma$.  Then $\mathcal{F}_{\gamma}$ is a Carleson family, with Carleson norm bounded by $C(C_0,A,A')\gamma^{-2}$.
\end{lem}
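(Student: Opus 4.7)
The proof should follow by a one-line Chebyshev-type argument from the estimate
$$\sum_{Q\subset P}\Theta_{A,A'}(Q)^2\ell(Q)\leq C(C_0, A,A')\ell(P)$$
which has just been established in the preceding discussion, so the bulk of the work is already done. No genuine obstacle remains; the only task is to package the observation correctly.

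The plan is as follows. Fix $P\in\mathcal{D}$ and consider the subsum over $Q\subset P$ with $Q\in\mathcal{F}_{\gamma}$. On this subsum one has the pointwise bound $\Theta_{A,A'}(Q)^2>\gamma^2$, so
\begin{equation}\nonumber
\gamma^2 \sum_{\substack{Q\subset P\\ Q\in\mathcal{F}_{\gamma}}}\ell(Q) \;\leq\; \sum_{\substack{Q\subset P\\ Q\in\mathcal{F}_{\gamma}}}\Theta_{A,A'}(Q)^2 \ell(Q) \;\leq\; \sum_{Q\subset P}\Theta_{A,A'}(Q)^2 \ell(Q) \;\leq\; C(C_0,A,A')\ell(P).
\end{equation}
Dividing by $\gamma^2$ yields $\sum_{Q\in\mathcal{F}_{\gamma},\,Q\subset P}\ell(Q)\leq C(C_0,A,A')\gamma^{-2}\ell(P)$, which is the Carleson condition with the claimed dependence of the norm on the parameters.

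Since the argument is a direct appeal to the preceding global estimate, the only point worth double-checking is the definition of Carleson norm used in the statement of the lemma, namely the best constant in $\sum_{Q\in\mathcal{F}_\gamma,\,Q\subset P}\ell(Q)\leq C\ell(P)$ for all $P\in\mathcal{D}$, matching the definition introduced at the end of the previous section. With that in hand there is nothing more to do.
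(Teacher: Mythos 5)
Your proposal is correct and is exactly the argument the paper intends: the lemma is stated there as an immediate corollary of the estimate $\sum_{Q\subset P}\Theta_{A,A'}(Q)^2\ell(Q)\leq C(C_0,A,A')\ell(P)$, and your Chebyshev-type restriction of the sum to $Q\in\mathcal{F}_{\gamma}$ followed by division by $\gamma^2$ is the standard (and only) step needed.
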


In order to prove Proposition \ref{carlreduction} (from which Theorem \ref{thm1} follows), it therefore suffices to prove the following proposition:

\begin{prop}\label{badlower}  Suppose $\mu$ is a $C_0$-good measure with AD regularity constant $c_0>0$.  There exist constants $A,A'>1$, and $\gamma>0$, such that for any square $Q\in \mathcal{B}^{\mu}$,
$$\Theta^{\mu}_{A,A'}(Q)\geq \gamma.
$$
\end{prop}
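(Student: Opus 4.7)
The approach is compactness-and-contradiction. The key structural observation is that, in the definition of $\Theta_{A,A'}(Q)$, the infimum over $F$ is largely cosmetic: up to an error of order $A^3/A'$, we may replace $F$ by the specific ball $B_0 := B(z_Q, A'\ell(Q))$ itself. Indeed, for any admissible $F$, set $G := F \setminus B_0$ and note $\supp(\psi) \subset B_0$, so
\begin{equation*}
\langle \Cau_\mu(\chi_G), \psi\rangle_\mu = \int\psi(z)\int_G K(z-\xi)\,d\mu(\xi)\,d\mu(z).
\end{equation*}
Using $\int \psi \,d\mu = 0$ to subtract $K(z_Q-\xi)$ from the inner kernel, the difference is bounded by $CA\ell(Q)/|z_Q-\xi|^2$, so Lemma \ref{tailest} together with the bound $\|\psi\|_{L^1(\mu)} \leq CA^2\ell(Q)^{1/2}$ yields
\begin{equation*}
\Theta^\mu_{A,A'}(Q) \geq \sup_{\psi\in\Psi^\mu_{Q,A}} \ell(Q)^{-1/2}\bigl|\langle \Cau_\mu(\chi_{B_0}),\psi\rangle_\mu\bigr| - C\frac{A^3}{A'}.
\end{equation*}

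Next, suppose the proposition fails. For a fixed large $A$ and sequences $A'_n\to\infty$, $\gamma_n\to 0$, extract $C_0$-good, $c_0$-AD-regular measures $\mu_n$ and bad squares $Q_n \in \mathcal{B}^{\mu_n}$ with $\Theta^{\mu_n}_{A,A'_n}(Q_n)<\gamma_n$. Translating and dilating (both properties are scale-invariant), take $Q_n$ to be the unit square at the origin. Pass to a weak subsequential limit $\mu_n\to\mu$ via Lemma \ref{bookconv}; Lemma \ref{wlnicegood} ensures $\mu$ is also $C_0$-good and $c_0$-AD-regular. The bad-square property is inherited by $\mu$: extract limits $\zeta,\xi,z$ of the witnesses, use Lemma \ref{ADsupp} to put $\zeta, \xi \in \supp(\mu)$, and use the lower semicontinuity $\mu(U)\leq\liminf_n\mu_n(U)$ on open sets to rule out any point of $\supp(\mu)$ lying in $B(z,\tau)$. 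To transfer the vanishing condition, fix $\psi\in\Psi^\mu_{0,A}$ and correct it by subtracting $c_n\varphi$ (with $\varphi$ a fixed bump and $c_n = \int\psi\,d\mu_n\to 0$) so that it has zero $\mu_n$-mean, a negligible perturbation. Combining the reduction above with $\Theta^{\mu_n}_{A,A'_n}(Q_n)<\gamma_n$, for any fixed $A'_*\geq A$ and $n$ large,
\begin{equation*}
\bigl|\langle \Cau_{\mu_n}(\chi_{B(0,A'_*)}),\psi\rangle_{\mu_n}\bigr|\leq \gamma_n + CA^3/A'_n \to 0.
\end{equation*}
Approximating $\chi_{B(0,A'_*)}$ by a Lipschitz cutoff and invoking the weak convergence of the bilinear form established at the end of Section \ref{weaklims}, I conclude that $\langle \Cau_\mu(\chi_{B(0,A'_*)}),\psi\rangle_\mu=0$ for every $\psi\in\Psi^\mu_{0,A}$ and every $A'_*\geq A$.

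The limit $\mu$ is then a $C_0$-good, $c_0$-AD-regular measure with a gap of radius $\tau$ in its support inside $B(0,10)$, yet pairing trivially with all zero-mean Lipschitz test functions on $B(0,A)$ against $\Cau_\mu(\chi_{B(0,A'_*)})$ for every large $A'_*$ — that is, $\mu$ is \emph{reflectionless} on $B(0,A)$ in the sense anticipated by the paper's title. The main obstacle, and the heart of this paper's contribution, is a structural result asserting that such a reflectionless, good, AD-regular measure must have locally line-like support, and in particular cannot admit a gap of definite relative size. Once that classification is established in the sections that follow, choosing $A$ much larger than $10$ renders the nominal gap obtained above incompatible with the induced structure, yielding the desired contradiction.
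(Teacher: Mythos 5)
Your overall architecture is the paper's: argue by contradiction, normalize the bad squares, pass to a weak limit, and reduce to the classification of reflectionless good AD-regular measures (Proposition \ref{reflectioncharac}). Your opening reduction (replacing $F$ by $B(z_Q,A'\ell(Q))$ at the cost of $CA^3/A'$, using the mean-zero property of $\psi$ and Lemma \ref{tailest}) is exactly the computation the paper performs when it bounds $|\langle\widetilde{\Cau}_{\mu_k}(1),\psi\rangle_{\mu_k}|$ by $\tfrac{1}{k}+\tfrac{CA^3}{k}$, and your limit of $\langle\Cau_\mu(\chi_{B(0,A'_*)}),\psi\rangle_\mu$ as $A'_*\to\infty$ is precisely the pairing with the renormalized transform $\widetilde{\Cau}_\mu(1)$, so the absence of that object from your write-up is only cosmetic. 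Two points, however, need attention.

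First, a minor quantitative slip: for a \emph{fixed} $A'_*\geq A$ you cannot conclude $\langle\Cau_\mu(\chi_{B(0,A'_*)}),\psi\rangle_\mu=0$. Passing from $B(0,A'_n)$ down to $B(0,A'_*)$ costs an additional $CA^3/A'_*$, which does not vanish as $n\to\infty$; even for the genuinely reflectionless measure $\mathcal{H}^1_{\mathbb{R}}$ the pairing against a finite ball is nonzero. What your estimates actually give is $|\langle\Cau_\mu(\chi_{B(0,A'_*)}),\psi\rangle_\mu|\leq CA^3/A'_*$, and only the $A'_*\to\infty$ limit vanishes. This is harmless, since that limit is all you need. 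Second, and this is the genuine gap: keeping $A$ fixed produces a measure that is reflectionless only against test functions $\psi\in\Phi^\mu_{A}$ supported in $B(0,A)$, whereas Proposition \ref{reflectioncharac} — to which you defer — is stated and proved for measures satisfying $\langle\widetilde{\Cau}_\mu(1),\psi\rangle_\mu=0$ for \emph{all} compactly supported mean-zero Lipschitz $\psi$. The proof of that classification (constancy of $\widetilde{\Cau}_\mu(1)$ on all of $\supp(\mu)$, the resolvent identity with cutoffs $\varphi_N$ as $N\to\infty$, tangent measures and the half-space lemma at arbitrary points) genuinely uses global reflectionlessness, and "choosing $A$ much larger than $10$" does not bridge this: no single $A$ suffices. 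The paper closes this by a second diagonalization — it runs your argument for each $A=k$, obtaining measures $\mu^{(k)}$ reflectionless on $\Phi^{\mu^{(k)}}_{k/2}$, and then takes one more weak limit, using Lemma \ref{weakcontcau1} with $\widetilde{A}_k=\tfrac{k}{2}\to\infty$ to upgrade to a globally reflectionless measure while retaining the gap $B(0,\tau)\cap\supp(\mu)=\varnothing$ and the witnesses $\zeta,\xi$. You need to add this second limiting step (or, alternatively, prove a localized version of the classification, which would be substantially more work).
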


We end this section with a simple remark about scaling.

\begin{rem}[Scaling Remark]\label{scalerem}
Fix a square $Q$, a function $\psi\in \Psi_{Q,A}^{\mu}$, and a compact set $F\subset \mathbb{C}$.  For $z_0\in \mathbb{C}$, set $\widetilde{\mu}(\cdot) = \tfrac{1}{\ell(Q)} \mu(\ell(Q)\cdot +z_0)$, $\widetilde\psi(\cdot) = \ell(Q)^{1/2}\psi(\ell(Q)\cdot + z_0)$ and $\widetilde{F} = \tfrac{1}{\ell(Q)}(F-z_0)$.  Then $||\widetilde{\psi}||_{\Lip}\leq 1$, $\supp(\widetilde\psi)\subset B(\tfrac{z_{Q}-z_0}{\ell(Q)}, A)$, and
$$\langle \Cau_{\widetilde{\mu}}(\chi_{\widetilde{F}}), \widetilde\psi\rangle_{\widetilde\mu} = \ell(Q)^{-1/2}\langle \Cau_{\mu}(\chi_F), \psi\rangle_{\mu}.$$
\end{rem}

\section{Reflectionless measures}\label{reflmeasintro}

In this section, we explore what happens if Proposition \ref{badlower} fails.   To do this, we shall need a workable definition of the Cauchy transform operator of a good measure acting on the constant function $1$.    Suppose that $\nu$ is a $C_0$-good measure with $0\not\in \supp(\nu)$.

\subsection{The function $\widetilde{\Cau}_{\nu}(1)$}  Let us begin with an elementary lemma.

\begin{lem}\label{l1awayfromsupp}  Suppose that $\sigma$ is a $C_0$-nice measure with $0\not\in\supp(\sigma)$. Let $z\in \mathbb{C}$ with $z\not\in\text{supp}(\sigma)$.  Set $d_0 = \text{dist}(\{0,z\}, \supp(\sigma))$.  Then
$$\int_{\mathbb{C}}\Bigl|\frac{1}{z-\xi}+\frac{1}{\xi}\Bigl| d\sigma(\xi) \leq \frac{C(C_0)|z|}{d_0}.
$$
\end{lem}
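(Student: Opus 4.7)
The plan is to exploit the cancellation hidden in the sum $\tfrac{1}{z-\xi}+\tfrac{1}{\xi}$ by combining the two fractions over a common denominator. A one-line algebraic identity gives $\tfrac{1}{z-\xi}+\tfrac{1}{\xi}=\tfrac{z}{\xi(z-\xi)}$, so the integrand has absolute value $\tfrac{|z|}{|\xi||z-\xi|}$. The factor $|z|$ is precisely the one appearing on the right-hand side of the claimed inequality, so the task reduces to proving
$$\int_{\mathbb{C}}\frac{1}{|\xi||z-\xi|}d\sigma(\xi)\leq\frac{C(C_0)}{d_0}.$$

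For this reduced estimate, I would symmetrize via the elementary inequality $2uv\leq u^2+v^2$ applied with $u=1/|\xi|$ and $v=1/|z-\xi|$, producing the pointwise bound
$$\frac{1}{|\xi||z-\xi|}\leq \tfrac{1}{2}\Bigl(\frac{1}{|\xi|^2}+\frac{1}{|z-\xi|^2}\Bigl).$$
Each of the two resulting integrals is a standard tail integral of a $C_0$-nice measure against an inverse-square kernel. Since $\mathrm{dist}(\{0,z\},\supp(\sigma))\geq d_0$, the support of $\sigma$ is disjoint from both $B(0,d_0)$ and $B(z,d_0)$. Applying Lemma \ref{tailest} with $\eps=1$ and $r=d_0$ yields $\int\tfrac{1}{|\xi|^2}d\sigma\leq \tfrac{2C_0}{d_0}$, and the same lemma applied to the translated measure $\sigma(\cdot+z)$ (which is again $C_0$-nice) yields the analogous estimate $\int\tfrac{1}{|z-\xi|^2}d\sigma\leq \tfrac{2C_0}{d_0}$. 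Summing and multiplying by $|z|/2$ gives the bound with $C(C_0)=2C_0$.

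I do not anticipate any real obstacle: the argument is an algebraic identity followed by a symmetrizing AM--GM step and two invocations of an already established tail estimate. The only point worth flagging is the translation invariance of the $C_0$-nice condition, which is what legitimizes the use of Lemma \ref{tailest} with $z$ playing the role of the origin in the second tail integral.
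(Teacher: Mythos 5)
Your proof is correct, and it reaches the stated bound (indeed with the explicit constant $2C_0$). The route differs from the paper's in how the cancellation is exploited after the common identity $\tfrac{1}{z-\xi}+\tfrac{1}{\xi}=\tfrac{z}{\xi(z-\xi)}$: the paper splits the integral \emph{spatially} into $B(0,2|z|)$ and its complement, bounding each fraction crudely by $1/d_0$ on the near region (where $\sigma(B(0,2|z|))\leq 2C_0|z|$ supplies the factor $|z|$) and using the combined kernel $\tfrac{2|z|}{|\xi|^2}$ together with Lemma \ref{tailest} only on the far region; you instead keep the combined kernel everywhere and symmetrize \emph{pointwise} via $\tfrac{1}{|\xi||z-\xi|}\leq\tfrac12\bigl(\tfrac{1}{|\xi|^2}+\tfrac{1}{|z-\xi|^2}\bigr)$, then invoke Lemma \ref{tailest} twice, once at the origin and once at $z$ (correctly flagging that $C_0$-niceness is translation invariant and that $\supp(\sigma)$ avoids both $B(0,d_0)$ and $B(z,d_0)$). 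Your version is arguably cleaner and avoids the case analysis implicit in the paper's $\max(2|z|,d_0)$; the paper's version has the minor virtue of only ever applying the tail estimate centered at the origin. Both are one-step reductions to Lemma \ref{tailest}, so the difference is cosmetic rather than structural.
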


\begin{proof} Note the estimate
$$\int_{\mathbb{C}} \Bigl|\frac{1}{z-\xi} + \frac{1}{\xi}\Bigl|d\sigma(\xi) \leq \frac{2}{d_0} \sigma(B(0, 2|z|)) +2\int_{\mathbb{C}\backslash B(0, 2|z|)} \frac{|z|}{|\xi|^2}d\sigma(\xi).
$$
The first term on the right hand side has size no greater than $\tfrac{2C_0|z|}{d_0}$.  Since the domain of integration in the second term can be replaced by $\mathbb{C}\backslash B(0, \max(d_0,2|z|))$, Lemma \ref{tailest} guarantees that the second integral is bounded by $\tfrac{C|z|}{\max(2|z|,d_0)}$.
\end{proof}

For $z\not\in \supp(\nu)$, define
\begin{equation}\label{C1offsupp}\widetilde{\Cau}_{\nu}(1)(z) = \int_{\mathbb{C}}\Bigl[\frac{1}{z-\xi} + \frac{1}{\xi}\Bigl]d\mu(\xi) = \int_{\mathbb{C}}[K(z-\xi)-K(-\xi)]d\nu(\xi).
\end{equation}
Lemma \ref{l1awayfromsupp} guarantees that this integral converges absolutely.


 To extend the definition to the support of $\nu$, we shall follow a rather standard path.  We shall initially define $\widetilde{\Cau}_{\nu}(1)$ as a distribution, before showing it is a well defined function $\nu$-almost everywhere.  Recall from Section \ref{weaklims} how we interpret $\Cau_{\nu}$ as a bounded operator in $L^2(\nu)$.

 Fix $\psi\in \Lip_0(\mathbb{C})$.  Choose $\varphi\in \Lip_0(\mathbb{C})$ satisfying $\varphi\equiv 1$ on a neighbourhood of the support of $\psi$.  Then define
\begin{equation}\begin{split}\label{C1defn}\langle\widetilde{\Cau}_{\nu}(1),&\psi\rangle_{\nu} = \langle\Cau_{\nu}(\varphi),\psi\rangle_{\nu}- \Cau_{\nu}(\varphi)(0)\cdot\!\int_{\mathbb{C}}\psi d\nu\\
&+\int_{\mathbb{C}}\psi(z)\!\!\int_{\mathbb{C}}(1-\varphi(\xi))\bigl[K(z-\xi)\!-\!K(-\xi)\bigl] d\nu(\xi)d\nu(z).
\end{split}\end{equation}




Note that Lemma \ref{l1awayfromsupp}, applied with $\sigma = |1-\varphi|\cdot\nu$, yields that
$$\sup_{z\in \supp(\psi)}\int_{\mathbb{C}}|(1-\varphi(\xi))|\cdot |K(z-\xi)-K(-\xi)|d\nu(\xi)<\infty.
$$
Therefore the inner product in (\ref{C1defn}) is well defined.  We now claim that this inner product is independent of the choice of $\varphi$. To see this, let $\varphi_1$ and $\varphi_2$ be two compactly supported Lipschitz continuous functions, that are both identically equal to $1$ on some neighbourhood of $\supp(\psi)$.  If $z\in \supp(\psi)$, then
\begin{equation}\begin{split}\nonumber\int_{\mathbb{C}}&(1-\varphi_1(\xi))\Bigl[\frac{1}{z-\xi}+\frac{1}{\xi}\Bigl]d\nu(\xi)-\int_{\mathbb{C}}(1-\varphi_2(\xi))\Bigl[\frac{1}{z-\xi}+\frac{1}{\xi}\Bigl]d\nu(\xi)\\
& = \int_{\mathbb{C}}(\varphi_2(\xi)-\varphi_1(\xi))\Bigl[\frac{1}{z-\xi}+\frac{1}{\xi}\Bigl]d\nu(\xi)\\
& =\int_{\mathbb{C}}(\varphi_2(\xi)-\varphi_1(\xi))K(z-\xi)d\nu(\xi)+\Cau_{\nu}(\varphi_1)(0) - \Cau_{\nu}(\varphi_2)(0).
\end{split}\end{equation}
(All integrals in this chain of equalities converge absolutely.)  Now consider
$$\int_{\mathbb{C}}\psi(z)\int_{\mathbb{C}}(\varphi_2(\xi)-\varphi_1(\xi))K(z-\xi)d\nu(\xi)d\nu(z).
$$
As a result of the anti-symmetry of $K$, this equals
$$\frac{1}{2}\int_{\mathbb{C}}\int_{\mathbb{C}}K(z-\xi)\bigl[\psi(z)[\varphi_2(\xi)-\varphi_1(\xi)]- \psi(\xi)[\varphi_2(z)-\varphi_1(z)]\bigl]d\nu(\xi)d\nu(z).
$$
However, as we saw in Section \ref{weaklims}, $K(z-\xi)(\psi(z)\varphi_j(\xi) - \psi(\xi)\varphi_j(z))\in L^1(\nu\times\nu)$ for each $j=1,2$.  Hence, by using the linearity of the integral, and applying Fubini's theorem, we see that the last line equals $I_{\nu}(\varphi_2,\psi)-I_{\nu}(\varphi_1,\psi)$. By definition, this is equal to  $\langle \Cau_{\nu}(\varphi_2),\psi\rangle_{\nu} - \langle \Cau_{\nu}(\varphi_1),\psi\rangle_{\nu}$.  The claim follows.

We have seen that $\widetilde{\Cau}_{\nu}(1)$ is well-defined as a distribution.  For any bounded open set $U\subset\mathbb{C}$, if we choose $\varphi$ to be identically equal to $1$ on a neighbourhood of $U$, then $\widetilde{\Cau}_{\nu}(1)\in L^2(U, \nu)$.  Since Lipschitz functions with compact support are dense in $L^2(U, \nu)$, we find that $\widetilde{\Cau}_{\nu}(1)$ is well-defined $\nu$-almost everywhere.

Finally, we note that the smoothness of the function $\varphi$ is not essential.  If $\psi\in \Lip_0(\mathbb{C})$, let $U$ be a bounded open set containing $\supp(\psi)$.  Then it is readily seen that $\langle\widetilde{\Cau}_{\nu}(1),\psi\rangle_{\nu}$ equals
\begin{equation}\begin{split}\nonumber
\langle\Cau_{\nu}(\chi_U),\psi\rangle_{\nu}- \Cau_{\nu}(\chi_U)(0)\cdot\langle 1,\psi\rangle_{\nu} +\Bigl\langle\int_{\mathbb{C}\backslash U}\!\! [K(\cdot-\xi)+K(\xi)]d\nu(\xi),\psi\Bigl\rangle_{\nu}.
\end{split}\end{equation}

\subsection{The weak continuity of  $\widetilde{\Cau}_{\nu}(1)$}

We shall introduce a couple more sets of functions.  $\Phi^{\nu}_{A}$  will denote those functions $\psi$ with $\|\psi\|_{\Lip}\leq 1$, that satisfy $\int_{\mathbb{C}} \psi \,d\nu =0$  and $\supp(\psi)\subset B(0,A)$.  We define $\Phi^{\nu}$ to be the set of compactly supported functions $\psi$ with $\|\psi\|_{\Lip}\leq 1$, satisfying $\int_{\mathbb{C}} \psi \,d\nu =0$.

We start with another standard estimate.

\begin{lem}\label{l1meanzero}
Suppose that $\nu$ is $C_0$-nice measure.  For $R>0$, suppose that $\psi\in \Phi_{R}^{\nu}$.  Then $\|\psi\|_{L^1(\nu)}\leq C(C_0)R^2$.
\end{lem}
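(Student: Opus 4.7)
The plan is to observe that the hypotheses give us very strong pointwise control on $\psi$, so that the estimate collapses to an elementary computation. The mean-zero assumption, despite being built into the definition of $\Phi^\nu_R$, is not actually needed for this particular bound; only the Lipschitz norm, the size of the support, and the niceness of $\nu$ will play a role.

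First I would recall the general inequality noted at the start of Section \ref{weaklims}: any compactly supported Lipschitz function $f$ satisfies $\|f\|_\infty \le \|f\|_{\Lip}\cdot \operatorname{diam}(\supp f)$. Since $\psi\in \Phi^\nu_R$ has $\|\psi\|_{\Lip}\le 1$ and $\supp(\psi)\subset B(0,R)$, we have $\operatorname{diam}(\supp\psi)\le 2R$, and hence $\|\psi\|_\infty \le 2R$. (Alternatively, for any $z\in B(0,R)$ one can pick a point $w$ on the ray through $z$ lying outside $B(0,R)$; then $\psi(w)=0$ and $|\psi(z)|=|\psi(z)-\psi(w)|\le |z-w|\le 2R$.)

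Next I would use that $\psi$ vanishes outside $B(0,R)$ and that $\nu$ is $C_0$-nice to get
\[
\|\psi\|_{L^1(\nu)} = \int_{B(0,R)}|\psi|\,d\nu \le \|\psi\|_\infty\cdot \nu(B(0,R)) \le 2R\cdot C_0 R = 2C_0 R^2,
\]
which is the claimed bound with $C(C_0)=2C_0$.

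There is really no obstacle here; the only subtle point worth flagging is that the mean-zero condition $\int\psi\,d\nu=0$ plays no role in this estimate, so the lemma in fact holds for every compactly supported $1$-Lipschitz function supported in $B(0,R)$. Presumably the authors state it only for $\psi\in\Phi^\nu_R$ because this is the form in which it will be applied in the sequel.
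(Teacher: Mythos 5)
Your proof is correct; the only real difference from the paper's argument is which hypothesis supplies the pointwise bound of order $R$ on $B(0,R)$. You use that $\psi$ is $1$-Lipschitz and vanishes off $B(0,R)$, so $\|\psi\|_{\infty}\leq \|\psi\|_{\Lip}\cdot\operatorname{diam}(\supp\psi)\leq 2R$, and then multiply by $\nu(B(0,R))\leq C_0R$. The paper instead leans on the mean-zero condition: since $\supp(\psi)\subset B(0,R)$ forces $\int_{B(0,R)}\psi\,d\nu=0$, it writes $\int_{B(0,R)}|\psi|\,d\nu=\int_{B(0,R)}\bigl|\psi-\tfrac{1}{\nu(B(0,R))}\int_{B(0,R)}\psi\,d\nu\bigr|\,d\nu\leq \operatorname{osc}_{B(0,R)}(\psi)\cdot\nu(B(0,R))$, and bounds the oscillation of a $1$-Lipschitz function on $B(0,R)$ by $2R$. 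Both routes give the same constant $2C_0$. Your side remark is accurate: the mean-zero assumption is genuinely unused in your version, so the bound holds for any $1$-Lipschitz function supported in $B(0,R)$; symmetrically, the paper's oscillation argument would survive even if $\psi$ did not vanish near the boundary of the ball, so long as it had $\nu$-mean zero there. In the present application $\psi$ satisfies all the hypotheses, so nothing hinges on the choice.
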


\begin{proof}  Simply note that
$$\int_{B(0,R)}|\psi|d\nu = \int_{B(0,R)} \Bigl|\psi-\frac{1}{\nu(B(0,R))}\int_{B(0,R)} \psi d\nu\Bigl| d\nu.
$$
This quantity is no greater than $\text{osc}_{B(0,R)}(\psi) \nu(B(0,R))$, which is less than or equal to $2R\cdot C_0R$.
\end{proof}

Our next lemma concerns a weak continuity property of $\widetilde{\Cau}_{\nu}(1)$.

\begin{lem}\label{weakcontcau1}  Let $\nu_k$ be a sequence of $C_0$-good measures, with $0\not\in \supp(\nu_k)$.  Suppose that $\nu_k$ converge weakly to $\nu$ (and so $\nu$ is $C_0$-good), with $0\not\in \supp(\nu)$.  Fix  non-negative sequences $\widetilde{\gamma}_k$ and $\widetilde{A}_k$, satisfying $\widetilde{\gamma}_k\rightarrow 0$, and $\widetilde{A}_k\rightarrow \widetilde{A}\in (0,\infty]$.

If $|\langle \widetilde{\Cau}_{\nu_k}(1), \psi\rangle_{\nu_k}| \leq \widetilde{\gamma}_k$ for all $\psi\in \Phi^{\nu_k}_{\widetilde{A}_k}$, then $$|\langle \widetilde{\Cau}_{\nu}(1), \psi\rangle_{\nu}| =0 \text{ for all }\psi\in \Phi^{\nu}_{\widetilde{A}}.$$
(Here $\Phi^{\nu}_{\widetilde{A}}= \Phi^{\nu}$ if $\widetilde{A}=\infty$.)
\end{lem}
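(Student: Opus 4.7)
The plan is to approximate any given $\psi\in\Phi^\nu_{\widetilde{A}}$ by a sequence $\psi_k\in\Phi^{\nu_k}_{\widetilde{A}_k}$ with $\psi_k\to\psi$ uniformly, and then establish the convergence $\langle\widetilde{\Cau}_{\nu_k}(1),\psi_k\rangle_{\nu_k}\to\langle\widetilde{\Cau}_{\nu}(1),\psi\rangle_{\nu}$; combined with the hypothesis this will force the limit pairing to vanish.  The standard device for the approximation is to adjust the mean. We may assume $\supp(\nu)\cap B(0,\widetilde{A})\neq\varnothing$, since otherwise $\psi$ vanishes $\nu$-almost everywhere and the pairing is trivially zero. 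Fix a non-negative $\eta\in\Lip_0(\mathbb{C})$ supported in a small ball about a point of $\supp(\nu)\cap B(0,\widetilde{A})$, so that $\int\eta\,d\nu>0$. Set $c_k=-\int\psi\,d\nu_k/\int\eta\,d\nu_k$; since $\int\eta\,d\nu_k\to\int\eta\,d\nu>0$ and $\int\psi\,d\nu_k\to\int\psi\,d\nu=0$ by weak convergence, $c_k\to 0$. Then $\psi_k:=(\psi+c_k\eta)/(1+|c_k|\|\eta\|_{\Lip})$ satisfies $\|\psi_k\|_{\Lip}\leq 1$, $\int\psi_k\,d\nu_k=0$, and (since $\supp(\psi)\cup\supp(\eta)$ sits strictly inside $B(0,\widetilde{A})$) $\supp(\psi_k)\subset B(0,\widetilde{A}_k)$ for all large $k$, so $\psi_k\in\Phi^{\nu_k}_{\widetilde{A}_k}$ and $\psi_k\to\psi$ uniformly. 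The hypothesis then gives $|\langle\widetilde{\Cau}_{\nu_k}(1),\psi_k\rangle_{\nu_k}|\leq(1+|c_k|\|\eta\|_{\Lip})\widetilde{\gamma}_k\to 0$.

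To pass to the limit in the pairing, note that by Lemma \ref{ADsupp} the supports of $\nu_k$ eventually avoid a fixed ball about $0$, so one may choose $\varphi\in\Lip_0(\mathbb{C})$ equal to $1$ on an open neighbourhood of the compact set $K:=\supp(\psi)\cup\supp(\eta)\cup\{0\}$. Since $\int\psi\,d\nu=0$ and $\int\psi_k\,d\nu_k=0$, the middle term of (\ref{C1defn}) vanishes in both the $\nu$- and $\nu_k$-expansions, leaving
$$\langle\widetilde{\Cau}_{\nu_k}(1),\psi_k\rangle_{\nu_k}=I_{\nu_k}(\varphi,\psi_k)+\int_{\mathbb{C}}\psi_k(z)G_k(z)\,d\nu_k(z),$$
where $G_k(z):=\int_{\mathbb{C}}(1-\varphi(\xi))\frac{z}{\xi(z-\xi)}\,d\nu_k(\xi)$ (using the identity $K(z-\xi)+K(\xi)=z/(\xi(z-\xi))$), with an analogous formula for $\nu$. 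Since $\psi_k$ is a linear combination of $\psi$ and $c_k\eta$ whose coefficients have finite limits as $k\to\infty$, the weak continuity of $I_{\mu_k}$ proved at the end of Section \ref{weaklims} gives $I_{\nu_k}(\varphi,\psi_k)\to I_\nu(\varphi,\psi)$.

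The main difficulty is the tail term. Writing $G_k(z)-G_k(z')=\int(1-\varphi(\xi))[K(z-\xi)-K(z'-\xi)]\,d\nu_k(\xi)$ and using $|K(z-\xi)-K(z'-\xi)|=|z-z'|/(|z-\xi||z'-\xi|)$, the fact that $\supp(1-\varphi)$ is separated by a positive distance $\delta>0$ from $\{0\}\cup K$ together with the $1/|\xi|^2$ decay at infinity (controlled by Lemma \ref{tailest}) shows that $\{G_k\}$ is uniformly bounded and uniformly Lipschitz on $K$. For each fixed $z\in K$, a smooth outer cut-off at a large radius combined with weak convergence $\nu_k\to\nu$ (applied to the continuous, compactly supported cut-off integrand, and with the tail handled by Lemma \ref{tailest}) yields the pointwise limit $G_k(z)\to G(z)$, and Arzel\`a--Ascoli upgrades this to uniform convergence on $K$. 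Consequently $\psi_kG_k\to\psi G$ uniformly on $K$, and since $\psi G\in C_0(\mathbb{C})$, weak convergence of $\nu_k\to\nu$ gives $\int\psi_kG_k\,d\nu_k\to\int\psi G\,d\nu$. Combining both terms, $\langle\widetilde{\Cau}_{\nu_k}(1),\psi_k\rangle_{\nu_k}\to\langle\widetilde{\Cau}_\nu(1),\psi\rangle_\nu$, completing the proof. The subtlest step is the uniform equicontinuity of $\{G_k\}$, which rests crucially on the buffer built into $\varphi$ around both $0$ and $K$.
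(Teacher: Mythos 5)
Your proof is correct and shares the paper's skeleton: correct the mean of $\psi$ with a fixed bump function to manufacture competitors $\psi_k\in\Phi^{\nu_k}_{\widetilde{A}_k}$, use the mean-zero property of $\psi_k$ to drop the $\Cau_{\nu_k}(\varphi)(0)$ term from (\ref{C1defn}), and pass to the limit in $I_{\nu_k}(\varphi,\cdot)$ via the weak continuity of the bilinear form established at the end of Section \ref{weaklims}. Where you genuinely diverge is in the treatment of the far-field term. The paper takes $\varphi\equiv 1$ on a ball $B(0,R')$ with $R'$ enormous, so that the tail contribution is uniformly at most $CR^3/R'<\eps$ for all $k$ simultaneously (this is exactly where Lemmas \ref{l1awayfromsupp} and \ref{l1meanzero} enter), and therefore never has to prove that the tail converges --- only that it is negligible. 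You instead fix $\varphi$ once and for all near $K$ and prove honest convergence of $\int\psi_k G_k\,d\nu_k$ by establishing uniform bounds and equicontinuity of the $G_k$ on $K$ and invoking Arzel\`a--Ascoli together with the weak convergence applied to a truncated integrand. Both routes work; the paper's is a little shorter because it trades your compactness argument for $\eps$--$R'$ bookkeeping, while yours yields the slightly stronger conclusion that $\langle\widetilde{\Cau}_{\nu_k}(1),\psi_k\rangle_{\nu_k}$ actually converges to $\langle\widetilde{\Cau}_{\nu}(1),\psi\rangle_{\nu}$. One small blemish: you invoke Lemma \ref{ADsupp} to claim that the supports of the $\nu_k$ eventually avoid a fixed ball about $0$, but that lemma is stated for AD-regular measures, and AD-regularity is not among the hypotheses here. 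Fortunately the step is not needed: since $\varphi\equiv 1$ on an open neighbourhood of $K\ni 0$, the set $\supp((1-\varphi)\nu_k)$ is automatically separated from both $0$ and $K$ by a distance depending only on $\varphi$, which is all that your bounds on $G_k$ require, and the standing assumption $0\notin\supp(\nu_k)$ already makes $\widetilde{\Cau}_{\nu_k}(1)$ well defined.
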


\begin{proof}  If $\nu(B(0,\widetilde{A}))=0$, then there is nothing to prove, so assume that $\nu(B(0,\widetilde{A}))>0$.  Let $\eps>0$.  Pick $\psi\in \Phi^{\nu}_{\widetilde{A}}$.   Then there exists $R\in (0,\infty)$ such that $\supp(\psi)\subset B(0,R)\subset B(0, \widetilde{A}_k)$ for all sufficiently large $k$, and $\nu(B(0,R))>0$.

Fix $\rho\in \Lip_0$ with $\supp(\rho)\subset B(0,R)$, such that $\int_{\mathbb{C}}\rho d\nu=c_{\rho}>0$.  Define
$$\psi_k = \psi- b_k \rho, \text{ with }b_k = \frac{1}{\int_{\mathbb{C}}\rho\, d\nu_k} \int_{\mathbb{C}}\psi d\nu_k.
$$
Note that $\psi_k$ is  supported in $B(0,R)$, and has $\mu_k$-mean zero.  Since $b_k\rightarrow 0$, we have that $\|\psi_k\|_{\Lip} \leq 2$ for all sufficiently large $k$.  Therefore, for these $k$, we have $|\langle \widetilde{\Cau}_{\nu_k}(1), \psi_k\rangle_{\nu_k}| \leq 2\widetilde{\gamma}_k$.

Now pick $\varphi\in \Lip_0$ with $\varphi \equiv 1$ on $B(0,2R)$ and $0\leq \varphi\leq 1$ on $\mathbb{C}$, such that both
$$|\langle \Cau_{\nu_k}(\varphi), \psi_k\rangle_{\nu_k} - \langle \widetilde{\Cau}_{\nu_k}(1), \psi_k\rangle_{\nu_k}|<\eps,$$ for all sufficiently large $k$, and $$|\langle \Cau_{\nu}(\varphi), \psi\rangle_{\nu} - \langle \widetilde{\Cau}_{\nu}(1), \psi\rangle_{\nu}|<\eps.
$$

To see that such a choice is possible, note that if $\varphi \equiv 1$ on $B(0,R')$ for $R'>2R$, then
$|\langle \Cau_{\nu_k}(\varphi), \psi_k\rangle_{\nu_k} - \langle \widetilde{\Cau}_{\nu_k}(1), \psi_k\rangle_{\nu_k}|$ is bounded by
$$\int_{B(0,R)}|\psi_k(z)|\int_{\mathbb{C}}|1-\varphi(\xi)|  \Bigl|\frac{1}{z-\xi} + \frac{1}{\xi}\Bigl|d\nu_k(\xi)d\nu_k(z),
$$
(recall here that $\psi_k$ has $\nu_k$ mean zero).  For any $z\in B(0,R)$, note that $\text{dist}(z, \supp(1-\varphi))\geq \tfrac{R'}{2}$, and so by applying Lemma \ref{l1awayfromsupp}, we see that the above quantity is no greater than $C\|\psi_k\|_{L^1(\nu_k)}\tfrac{R}{R'}$.  Applying Lemma \ref{l1meanzero}, we see that  $|\langle \Cau_{\nu_k}(\varphi), \psi_k\rangle_{\nu_k} - \langle \widetilde{\Cau}_{\nu_k}(1), \psi_k\rangle_{\nu_k}|\leq C\tfrac{R^3}{R'}$, which can be made smaller than $\eps$ with a reasonable choice of $R'$.  The same reasoning shows that $|\langle \Cau_{\nu}(\varphi), \psi\rangle_{\nu} - \langle \widetilde{\Cau}_{\nu}(1), \psi\rangle_{\nu}|<\eps$ provided $R'$ is chosen suitably.

On the other hand, as $\nu_k$ is $C_0$-good, we have $ |\langle \Cau_{\nu_k}(\varphi), \rho \rangle_{\nu_k}|\leq C_0\|\varphi\|_{L^2(\nu_k)}\|\rho\|_{L^2(\nu_k)} $.   Since $\varphi$ and $\rho$ are compactly supported Lipschitz functions, the right hand side of this inequality converges to $C_0\|\varphi\|_{L^2(\nu)}\|\rho\|_{L^2(\nu)} $, and so it is bounded independently of $k$.

Bringing together these observations, we see that $\langle \Cau_{\nu_k}(\varphi), \psi_k\rangle_{\nu_k}$ converges to $\langle \Cau_{\nu}(\varphi), \psi\rangle_{\nu}$ as $k\rightarrow \infty$.  But since $|\langle \widetilde{\Cau}_{\nu_k}(1), \psi_k\rangle_{\nu_k}| \leq 2\widetilde{\gamma}_k$ for $k$ large enough, we deduce from the triangle inequality that $|\langle \widetilde{\Cau}_{\nu}(1), \psi\rangle_{\nu}| \leq 4\eps$.
\end{proof}

Let us now suppose that Proposition \ref{badlower} is false.  Fix $A\geq 100$.  For each $k \in \mathbb{N}$, $k\geq 2A$, there is a $C_0$-good measure $\mu_k$ with AD-regularity constant $c_0>0$, a square $Q_k\in \mathcal{B}^{\mu_k}$, and a set $E_k\supset B(z_{Q_k}, k\ell(Q_k))$ such that
\begin{equation}\label{gammaksmall}| \langle \Cau_{\mu_k}(\chi_{E_k}), \psi \rangle_{\mu_k}|\leq \frac{1}{k}, \text{ for all }\psi\in \Psi_{A,Q_k}^{\mu_k}.
\end{equation}
In addition, by the scale invariance of the condition (\ref{gammaksmall}) (see Remark \ref{scalerem}), we may dilate and translate the square $Q_k$ so that it has side length $1$, and so that there are $\zeta_k, \xi_k \in B(z_{Q_k}, 10)\cap\supp(\mu_k)$ with $|\zeta_k-\xi_k|\geq 1/2$, such that $0\in[\zeta_k,\xi_k]$ and $B(0, \tau)\cap \supp(\mu_k)=\varnothing$.  Note that the translated and dilated square is not necessarily dyadic.


By passing to a subsequence if necessary, we may assume that $\mu_k$ converge weakly to a measure $\mu^{(A)}$ (using the uniform niceness of the $\mu_k$).  This limit measure is $C_0$-good, with AD-regularity constant $c_0$, and $0\not\in \mu^{(A)}$.  Furthermore, it is routine to check that $\mu^{(A)}$ satisfies the following property (recall Lemma \ref{ADsupp}):
\begin{equation}\label{badmuA}\begin{split}\text{There exist }&\xi, \, \zeta  \in \overline{B(0,20)}\cap \supp(\mu^{(A)}),\text{ with } |\xi-\zeta|\geq \frac{1}{2},\\&\text{  such that }0\in [\zeta,\xi]\text{ and }B(0,\tau)\cap \supp(\mu^{(A)})=\varnothing.\end{split}\end{equation}

Now, for each $k$ we have that $B(0, \tfrac{A}{2}) \subset B(z_{Q_k}, A)$ and $E_k\supset B(0, \tfrac{k}{2}) \supset B(0,A)$.  We claim that 
$$|\langle \widetilde{\Cau}_{\mu_k}(1), \psi\rangle_{\mu_k}| \leq \frac{1}{k} +\frac{CA^3}{k}, \text{ for all }\psi\in \Phi_{\tfrac{A}{2}}^{\mu_k}.
$$
To see this, note that for any $\psi\in \Phi_{\tfrac{A}{2}}^{\mu_k}$, $\langle \widetilde{\Cau}_{\mu_k}(1), \psi\rangle_{\mu_k}$ is equal to
$$ \langle \Cau_{\mu_k}(\chi_{E_k}), \psi\rangle_{\mu_k} + \int_{B(0,\tfrac{A}{2})}\psi(z)\int_{\mathbb{C}\backslash E_k} \Bigl(\frac{1}{z-\xi} + \frac{1}{\xi}\Bigl) d\mu_k(\xi)d\mu_k(z).
$$
The first term is smaller than $\tfrac{1}{k}$ in absolute value.  To bound the second term, note that for any $z\in B(0, \tfrac{A}{2})$, $\text{dist}(z, \mathbb{C}\backslash E_k)\geq \tfrac{k}{2}$ so Lemma \ref{l1awayfromsupp} yields that this second term is no larger than $\tfrac{CA}{k}\|\psi\|_{L^1(\mu_k)}$, and applying Lemma \ref{l1meanzero} yields the required estimate.

We now apply Lemma \ref{weakcontcau1} with $\nu_k = \mu_k$, $\widetilde{\gamma}_k =  \tfrac{1}{k} +\tfrac{CA^3}{k}$, and $\widetilde{A}_k = \tfrac{A}{2}$.  Our conclusion is that $|\langle \widetilde{\Cau}_{\mu^{(A)}}(1), \psi\rangle_{\mu^{(A)}}| = 0,
$ for all $\psi\in \Phi_{\tfrac{A}{2}}^{\mu^{(A)}}$.

We now set $A=k$, for $k>100$.  The above argument yields a measure $\mu^{(k)}$ satisfying $|\langle \widetilde{\Cau}_{\mu^{(k)}}(1), \psi\rangle_{\mu^{(k)}}| = 0,
$ for all $\psi\in \Phi_{\tfrac{k}{2}}^{\mu^{(k)}}$.   We now pass to a subsequence of $\{\mu^{(k)}\}_k$ so that $\mu^{(k)}\rightarrow \mu$ weakly as $k\rightarrow \infty$.  The measure $\mu$ is $C_0$-good with AD-regularity constant $c_0$, and satisfies the property (\ref{badmuA}) with $\mu$ replacing $\mu^{(A)}$.  By applying Lemma \ref{weakcontcau1}  with $\widetilde{\nu_k} = \mu^{(k)}$, $\widetilde{\nu}=\mu$, $\widetilde{A}_k = \tfrac{k}{2}$, and $\widetilde{\gamma}_k=0$, we arrive at the following result:
\begin{lem}\label{absurd} Suppose that Proposition \ref{badlower} fails.  Then there exists a $C_0$-good measure $\mu$ with AD-regularity constant $c_0$, such that
\begin{equation}\label{reflectionless} |\langle \widetilde{\Cau}_{\mu}(1), \psi\rangle_{\mu}| = 0, \text{ for all }\psi\in \Phi^{\mu},
\end{equation}
and there exist $\xi, \, \zeta \in \overline{B(0,20)}\cap \supp(\mu)$, with  $|\xi-\zeta|\geq \frac{1}{2},$ such that $0\in [\zeta,\xi]$ and $B(0,\tau)\cap \supp(\mu)=\varnothing.$

\end{lem}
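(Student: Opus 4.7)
The plan is a two-stage diagonal weak compactness argument. Negating Proposition \ref{badlower} yields, for each $A \geq 100$ and each integer $k \geq 2A$, a $C_0$-good $c_0$-AD-regular measure $\mu_k$ and a bad square $Q_k \in \mathcal{B}^{\mu_k}$ together with a set $E_k \supset B(z_{Q_k}, k\ell(Q_k))$ for which $|\langle \Cau_{\mu_k}(\chi_{E_k}), \psi\rangle_{\mu_k}| \leq 1/k$ whenever $\psi \in \Psi^{\mu_k}_{Q_k, A}$. Using the scale invariance provided by Remark \ref{scalerem}, I would first normalize so that $\ell(Q_k) = 1$, the line segment witnessing badness passes through $0$, and $B(0,\tau)\cap\supp(\mu_k)=\varnothing$; under this normalization $|z_{Q_k}|\leq 20$ and none of the good/regular constants change.

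Next, with $A$ held fixed, I extract via Lemma \ref{bookconv} a weakly convergent subsequence $\mu_k \to \mu^{(A)}$. Lemma \ref{wlnicegood} transfers $C_0$-goodness and $c_0$-AD-regularity to $\mu^{(A)}$, and Lemma \ref{ADsupp} together with the normalization gives the geometric property \eqref{badmuA} for $\mu^{(A)}$. The heart of the argument is to convert the smallness of $\langle \Cau_{\mu_k}(\chi_{E_k}), \psi\rangle_{\mu_k}$ on the Riesz family into smallness of $\langle \widetilde{\Cau}_{\mu_k}(1), \psi\rangle_{\mu_k}$ on $\Phi^{\mu_k}_{A/2}$. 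After normalization one has $\Phi^{\mu_k}_{A/2} \subset \Psi^{\mu_k}_{Q_k, A}$, since $B(0, A/2) \subset B(z_{Q_k}, A)$ and $\|\psi\|_{\Lip}\leq 1 = \ell(Q_k)^{-3/2}$, and for such $\psi$ I would write
\[
\langle \widetilde{\Cau}_{\mu_k}(1), \psi\rangle_{\mu_k} = \langle \Cau_{\mu_k}(\chi_{E_k}), \psi\rangle_{\mu_k} + \int_{B(0, A/2)}\!\! \psi(z)\!\! \int_{\Comp \setminus E_k} \Bigl[\tfrac{1}{z-\xi} + \tfrac{1}{\xi}\Bigr]d\mu_k(\xi)\,d\mu_k(z).
\]
The first term is bounded by $1/k$. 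For the second, note $\mathrm{dist}(B(0, A/2), \Comp \setminus E_k) \geq k/4$; Lemma \ref{l1awayfromsupp} then gives a pointwise bound of order $A/k$ for the inner tail integral, and Lemma \ref{l1meanzero} yields $\|\psi\|_{L^1(\mu_k)}\leq C(C_0) A^2$, so this term is $O(A^3/k)$. Feeding $\widetilde{A}_k \equiv A/2$ and $\widetilde{\gamma}_k = 1/k + CA^3/k \to 0$ into Lemma \ref{weakcontcau1} concludes $\langle \widetilde{\Cau}_{\mu^{(A)}}(1), \psi\rangle_{\mu^{(A)}} = 0$ for every $\psi \in \Phi^{\mu^{(A)}}_{A/2}$.

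Finally, I set $A = k$ and extract a further weak limit $\mu^{(k)} \to \mu$. All regularity properties and the geometric condition \eqref{badmuA} are inherited by $\mu$, and one last application of Lemma \ref{weakcontcau1}, with $\widetilde{A}_k = k/2 \to \infty$ and $\widetilde{\gamma}_k \equiv 0$, upgrades the conclusion to $\langle \widetilde{\Cau}_\mu(1), \psi\rangle_\mu = 0$ for every $\psi \in \Phi^\mu$, which is precisely \eqref{reflectionless}. The main obstacle is the tail bookkeeping in the conversion step: one must let $A$ grow in order to probe $\widetilde{\Cau}(1)$ against test functions on ever larger scales, yet the replacement of $\Cau_\mu(\chi_{E_k})$ by $\widetilde{\Cau}_\mu(1)$ introduces an error scaling like $A^3/k$; the diagonal $A = k$ works because $k$ is chosen last and can be made arbitrarily large after $A$ is fixed in the first stage, keeping the error $o(1)$ throughout.
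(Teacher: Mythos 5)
Your proposal is correct and follows essentially the same two-stage diagonal argument as the paper: the same normalization via Remark \ref{scalerem}, the same conversion of the hypothesis on $\Cau_{\mu_k}(\chi_{E_k})$ into an $O(A^3/k)$ bound on $\langle \widetilde{\Cau}_{\mu_k}(1),\psi\rangle_{\mu_k}$ via Lemmas \ref{l1awayfromsupp} and \ref{l1meanzero}, and the same double application of Lemma \ref{weakcontcau1} followed by the diagonal choice $A=k$.
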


We call any measure $\mu$ that satisfies (\ref{reflectionless}) a \textit{reflectionless measure}.   It turns out that there aren't too many good AD-regular reflectionless measures.

\begin{prop} \label{reflectioncharac} Suppose that $\mu$ is a non-trivial reflectionless good AD-regular measure.  Then $\mu = c\mathcal{H}^1_{L}$ for a line $L$, and a positive constant $c>0$.
\end{prop}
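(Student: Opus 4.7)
My strategy breaks into three steps: (i) promote the reflectionless condition to a pointwise identity on $\supp(\mu)$; (ii) reduce to tangent measures via blow-up; and (iii) establish tangent-measure rigidity, which I expect to be the main difficulty.

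For (i), the vanishing of $\langle \widetilde{\Cau}_\mu(1), \psi\rangle_\mu$ for every $\psi \in \Phi^\mu$ says that $\widetilde{\Cau}_\mu(1)$ is $L^2(\mu)$-orthogonal to every mean-zero compactly supported Lipschitz function. Testing against $\Lip_0$-approximations of $\chi_B - \tfrac{\mu(B)}{\mu(B')}\chi_{B'}$ for pairs of small discs shows that the $\mu$-averages of $\widetilde{\Cau}_\mu(1)$ over all such discs agree, so by Lebesgue differentiation there exists $c \in \mathbb{C}$ with $\widetilde{\Cau}_\mu(1) \equiv c$ $\mu$-a.e. Setting $F(z) := \widetilde{\Cau}_\mu(1)(z) - c$, the function $F$ is holomorphic in $z$ on $\mathbb{C}\setminus\supp(\mu)$, satisfies $\bar\partial F = \pi\mu$ distributionally, and has zero ``$L^2_{\text{loc}}(\mu)$-trace'' on $\supp(\mu)$.

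For (ii), at each $z_0 \in \supp(\mu)$ and scale $r > 0$, the rescaled measure $\mu_{z_0,r}(E) := r^{-1}\mu(z_0 + rE)$ is uniformly $C_0$-good and $c_0$-AD-regular. Weak subsequential limits $\nu$ inherit both properties by Lemma \ref{wlnicegood}, and the reflectionless condition passes to $\nu$ via Lemma \ref{weakcontcau1} applied along this rescaled family (with a harmless shift to keep $0 \notin \supp(\nu)$, exploiting translation-covariance of $\widetilde{\Cau}$). It therefore suffices to prove the conclusion for tangent measures; uniqueness and consistency of tangents at $\mu$-a.e.\ point then propagate the linear structure back to the global measure $\mu$ (if two tangent directions disagreed at nearby points, a blow-up near the disagreement would contradict AD-regularity).

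Step (iii) is the principal obstacle: showing that any reflectionless, $C_0$-good, $c_0$-AD-regular $\nu$ must be $c\mathcal{H}^1_L$ for a line $L$. My plan is to exploit the holomorphy of $F$ off $\supp(\nu)$ together with its $L^2_{\text{loc}}(\nu)$-vanishing on the support. If $\supp(\nu)$ were not contained in a line, then at some scale and point it would carry mass with two linearly independent directions; I would construct a compactly supported mean-zero Lipschitz $\psi$ exploiting this transversality (for instance, odd with respect to one of the two directions) and use the directional sensitivity of the Cauchy kernel $1/(z-\xi)$ to produce $\langle \widetilde{\Cau}_\nu(1),\psi\rangle_\nu \neq 0$, contradicting reflectionlessness. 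Once $\supp(\nu) \subset L$, the reflectionless identity restricted to $L$ reads exactly that the one-dimensional Hilbert transform of the density of $\nu$ relative to $\mathcal{H}^1_L$ is constant; combined with positivity and the two-sided AD-regularity bounds, this forces the density to be a positive constant, so $\nu = c\mathcal{H}^1_L$.
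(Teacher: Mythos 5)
Your step (i) is fine and matches the paper's Lemma \ref{refconstsupp}, but the two load-bearing parts of your plan do not hold up. First, the reduction in (ii) is not valid: knowing that every tangent measure of $\mu$ is of the form $c\mathcal{H}^1_L$ does \emph{not} imply that $\mu$ itself is; the arclength measure on any $C^1$ curve has flat tangent measures at every point while being nowhere globally flat, and it is AD-regular, so your parenthetical claim that ``two tangent directions disagreeing at nearby points would contradict AD-regularity'' is false. The rigidity must be extracted from the reflectionless condition acting on $\mu$ at \emph{all} scales simultaneously, not only infinitesimally. (The paper does use tangent measures, but only as auxiliary tools inside arguments about $\mu$ itself: to rule out gaps in a line-supported measure, and to compute the jump of $\widetilde{\Cau}_{\mu}(1)$ across the support at a point of tangency.)

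Second, step (iii) — which you correctly identify as the main difficulty — is a statement of intent rather than an argument. The proposal to ``construct a mean-zero Lipschitz $\psi$ exploiting transversality and the directional sensitivity of the kernel'' has no mechanism behind it: the reflectionless condition only tells you that $\widetilde{\Cau}_{\mu}(1)$, the transform of the \emph{entire} measure, is constant on $\supp(\mu)$, and there is no obvious way to localize this to detect two transversal directions; indeed, the analogous soft approach fails for higher-dimensional Riesz transforms, where the corresponding question was open for decades. What actually makes the Cauchy case work is the resolvent identity (\ref{resolve}), which after careful regularization yields $[\widetilde{\Cau}_{\mu}(1)(z)]^2 = 2\kap\,\widetilde{\Cau}_{\mu}(1)(z)$ for $z\notin\supp(\mu)$ (Lemma \ref{resolvlem}). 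Hence $\widetilde{\Cau}_{\mu}(1)$ takes at most two values off the support, which pins down the direction of tangency from each complementary component (Lemmas \ref{nothingotherside} and \ref{halfspacelem}), places $\supp(\mu)$ in a half-plane through every nearest point, and then a three-point convexity argument forces the support into a line. None of this is present in your outline, and without some substitute for the resolvent identity the contradiction you hope to derive in (iii) does not materialize. Your final Hilbert-transform remark also skips the step of showing the support is the \emph{whole} line before reading the condition as a statement about a density on $L$.
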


Note that Proposition \ref{reflectioncharac} contradicts the existence of the measure $\mu$ in Lemma \ref{absurd}.  Therefore, once Proposition \ref{reflectioncharac} is proved, we will have asserted Proposition \ref{badlower}, and Theorem \ref{thm1} will follow.  Hence it remains to prove the proposition.   It is at this stage where the precise structure of the Cauchy transform is used.

\section{The Cauchy transform of a reflectionless good measure $\mu$ is constant in each component of $\mathbb{C}\backslash \supp(\mu)$}

Our goal is now to prove Proposition \ref{reflectioncharac}.  Suppose that $\mu$ is a reflectionless $C_0$-good measure.  We may assume that $0\not\in \supp(\mu)$.  All constants in this section may depend on $C_0$ without explicit mention.


Since $\widetilde{\Cau}_{\mu}(1)$ is a well defined $\mu$-almost everywhere function and satisfies (\ref{reflectionless}), we conclude that it is a constant $\mu$-almost everywhere in $\mathbb{C}$, say with value $\kap\in \mathbb{C}$.  

\begin{lem} \label{refconstsupp} Suppose that $\mu$ is a $C_0$-good reflectionless measure, and $0\not\in \supp(\mu)$.  Then there exists $\kap\in \mathbb{C}$ such that
 $\widetilde{\Cau}_{\mu}(1)=\kap$  $\mu$-almost everywhere.
\end{lem}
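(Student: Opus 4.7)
The plan is to leverage the reflectionless condition in a standard duality argument to force constancy. First I observe that by scaling $\psi \mapsto \lambda \psi$, the identity $\langle \widetilde{\Cau}_\mu(1), \psi \rangle_\mu = 0$ on $\Phi^\mu$ extends at once to every compactly supported Lipschitz function of $\mu$-mean zero, with no constraint on the Lipschitz norm.

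I would then localize. Assuming $\mu \not\equiv 0$ (else the conclusion is vacuous), fix a bounded open $U \subset \mathbb{C}$ with $\mu(U)>0$, and pick $\rho \in \Lip_0(\mathbb{C})$ compactly supported in $U$ with $\int \rho \, d\mu = 1$. Set $\kap = \langle \widetilde{\Cau}_\mu(1), \rho\rangle_\mu$. For any $\psi \in \Lip_0(\mathbb{C})$ with $\supp(\psi) \subset U$, the function $\psi - \bigl(\int \psi \, d\mu\bigr) \rho$ is a compactly supported Lipschitz function of $\mu$-mean zero, so the reflectionless hypothesis together with the linearity of the distribution $\widetilde{\Cau}_\mu(1)$ in its argument yields
$$\langle \widetilde{\Cau}_\mu(1), \psi\rangle_\mu = \kap \int_{\mathbb{C}} \psi \, d\mu.$$

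Next I would pass to the $L^2$ representative. As noted in Section \ref{reflmeasintro}, choosing the auxiliary cutoff in the definition of $\widetilde{\Cau}_\mu(1)$ to be identically $1$ on a neighbourhood of $\overline{U}$ realizes the distribution as a genuine function $f_U \in L^2(U, \mu)$. The displayed identity then reads
$$\int_U (f_U - \kap) \psi \, d\mu = 0 \quad \text{for every } \psi \in \Lip_0(\mathbb{C}) \text{ with } \supp(\psi) \subset U.$$
Since such $\psi$'s are dense in $L^2(U,\mu)$ (via mollification of compactly supported continuous functions on $U$), this forces $\widetilde{\Cau}_\mu(1) = \kap$ $\mu$-almost everywhere on $U$.

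Finally I would verify that the value of $\kap$ is independent of the chosen $U$. Given two such sets $U_1, U_2$, embed both in a larger bounded open set $V$; applying the previous step to $V$ produces a single constant that must coincide with those obtained from $U_1$ and $U_2$, because the respective constants agree on overlaps of positive $\mu$-measure. Exhausting $\mathbb{C}$ by a nested sequence of such $V$'s then yields a universal $\kap$ with $\widetilde{\Cau}_\mu(1) = \kap$ $\mu$-a.e. The whole argument is routine once the distributional setup of Section \ref{reflmeasintro} has been digested, and I do not anticipate an essential obstacle beyond carefully matching the bookkeeping of the auxiliary cutoff with the $L^2$ realization on each bounded open set.
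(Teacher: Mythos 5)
Your proposal is correct and is exactly the argument the paper has in mind: the paper states the lemma as an immediate consequence of the fact that $\widetilde{\Cau}_{\mu}(1)$ is a well-defined $L^2_{\mathrm{loc}}(\mu)$ function annihilating all compactly supported Lipschitz mean-zero test functions, and your write-up simply supplies the routine normalization, localization, and density details that the authors leave to the reader. No gaps.
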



Our considerations up to now have been quite general, but now our hand is forced to use the magic of the complex plane.  The main difficulty is to obtain some information about the values of $\widetilde{\Cau}_{\mu}(1)$ away from the support of $\mu$ in terms of the constant value $\kap$.

\subsection{The resolvent identity}



\begin{lem}\label{resolvlem} For every $z\not\in\supp(\mu)$,
$$[\widetilde{\Cau}_{\mu}(1)(z)]^2 = 2\kap\cdot   \widetilde{\Cau}_{\mu}(1)(z).
$$
\end{lem}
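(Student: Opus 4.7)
My plan is to verify this identity via a resolvent-type calculation using the kernel $\Phi_z(\xi) := \tfrac{1}{z-\xi}+\tfrac{1}{\xi}$. Since $z,0\notin\supp(\mu)$, Lemma \ref{l1awayfromsupp} gives $\Phi_z\in L^1(\mu)$, so Fubini's theorem yields
\begin{equation*}
[\widetilde{\Cau}_\mu(1)(z)]^2 = \iint \Phi_z(\xi)\Phi_z(\eta)\,d\mu(\xi)d\mu(\eta).
\end{equation*}
I would then verify, by direct partial fractions (using the resolvent identity $\frac{1}{(z-\xi)(z-\eta)}=\frac{1}{\eta-\xi}[\frac{1}{z-\eta}-\frac{1}{z-\xi}]$ together with the simplification $\Phi_z(\xi)(\xi-z)=-z/\xi$), the key algebraic identity
\begin{equation*}
2\Phi_z(\xi)\Phi_z(\eta) = \Phi_z(\xi)\Phi_\xi(\eta)+\Phi_z(\eta)\Phi_\eta(\xi)+\frac{z^2}{\xi\eta(z-\xi)(z-\eta)}.
\end{equation*}

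Integrating both sides over $\mu\times\mu$, the last term factors by Fubini as $z^2\bigl(\int\tfrac{d\mu(\xi)}{\xi(z-\xi)}\bigr)^2$, and the partial fraction $\tfrac{1}{\xi(z-\xi)}=\tfrac{1}{z}\bigl[\tfrac{1}{\xi}+\tfrac{1}{z-\xi}\bigr]$ gives $\int\tfrac{d\mu(\xi)}{\xi(z-\xi)}=F(z)/z$, where I abbreviate $F(z):=\widetilde{\Cau}_\mu(1)(z)$. Hence this term contributes exactly $F(z)^2$, leaving
\begin{equation*}
\iint\bigl[\Phi_z(\xi)\Phi_\xi(\eta)+\Phi_z(\eta)\Phi_\eta(\xi)\bigr]\,d\mu(\xi)d\mu(\eta) = F(z)^2.
\end{equation*}

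The remaining step is to show that this symmetric double integral equals $2\kappa F(z)$. Formally, symmetrizing under $\xi\leftrightarrow\eta$ and applying Fubini gives $2\int\Phi_z(\xi)\bigl[\int\Phi_\xi(\eta)d\mu(\eta)\bigr]d\mu(\xi) = 2\int\Phi_z(\xi)\widetilde{\Cau}_\mu(1)(\xi)d\mu(\xi) = 2\kappa F(z)$, where the inner integral is identified with $\widetilde{\Cau}_\mu(1)(\xi)=\kappa$ $\mu$-a.e.\ (Lemma \ref{refconstsupp}) and $\int\Phi_z\,d\mu=F(z)$. Combined with the previous display, this gives the desired identity $F(z)^2=2\kappa F(z)$.

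The main obstacle is to justify the Fubini step in this final identification, because the inner integral $\int\Phi_\xi(\eta)\,d\mu(\eta)$ fails to converge absolutely for $\xi\in\supp(\mu)$: the $\tfrac{1}{|\xi-\eta|}$ singularity at $\eta=\xi$ is not $\mu$-integrable for AD-regular $\mu$. I would overcome this by truncating the double integral both in space (restricting to $B(0,R)^2$) and near the diagonal (using $\chi_{|\xi-\eta|>\delta}$). For each fixed $R,\delta$ the truncated integrand is absolutely integrable, so Fubini applies and produces an expression of the form $2\int\Phi_z(\xi)\chi_{B(0,R)}(\xi)\,\widetilde{\Cau}_{\mu,\delta,R}(1)(\xi)\,d\mu(\xi)$ for a doubly truncated Cauchy transform of $1$. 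Taking $R\to\infty$ then $\delta\to 0$, the truncated LHS converges to $F(z)^2$ by dominated convergence (the symmetric integrand lies in $L^1(\mu\times\mu)$ by the identity above). Identifying the limit of the RHS with $2\kappa F(z)$ is the most delicate technical point: it requires transferring the $\mu$-a.e.\ identity $\widetilde{\Cau}_\mu(1)=\kappa$ into a weak-type statement against the test measure $\Phi_z\mu$, exploiting the weak-continuity machinery of Section \ref{weaklims} together with the $L^2$-boundedness of the truncated operators, since pointwise a.e.\ convergence of the truncated transforms is not generally available (and would itself amount to a deep rectifiability result).
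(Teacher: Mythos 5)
Your algebraic setup is correct and is in fact identical to the paper's: since $\Phi_z(\xi)=\tfrac{z}{\xi(z-\xi)}$, one checks that $\Phi_z(\xi)\Phi_\xi(\eta)+\Phi_z(\eta)\Phi_\eta(\xi)=\Phi_z(\xi)\Phi_z(\eta)=\tfrac{z^2}{\xi\eta(z-\xi)(z-\eta)}$, so your displayed identity is the paper's regularized resolvent identity rearranged, and your reduction to showing $\iint\bigl[\Phi_z(\xi)\Phi_\xi(\eta)+\Phi_z(\eta)\Phi_\eta(\xi)\bigr]d\mu\,d\mu=2\kap F(z)$ is exactly the paper's formal statement $2\widetilde{\Cau}_{\mu}(\widetilde{\Cau}_{\mu}(1))(z)=[\widetilde{\Cau}_{\mu}(1)(z)]^2$ combined with Lemma \ref{refconstsupp}. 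You also correctly locate the difficulty. The problem is that you stop precisely there: the last paragraph describes what would have to be proved (``transferring the $\mu$-a.e.\ identity into a weak-type statement against the test measure $\Phi_z\mu$'') rather than proving it, and that identification is where essentially all of the content of the paper's proof lives.

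Two concrete things are missing. First, your order of limits ($R\to\infty$ before $\delta\to 0$) leaves you, at fixed $\delta$, with the inner integral $\int_{|\eta-\xi|>\delta}\Phi_\xi(\eta)\,d\mu(\eta)$, a truncated transform of the constant function $1\notin L^2(\mu)$; the machinery of Section \ref{weaklims} only gives $\Cau_{\mu,\delta}(f)\to\Cau_{\mu}(f)$ weakly in $L^2(\mu)$ for $f\in L^2(\mu)$, so it does not apply to this object, and no uniform-in-$\delta$ control of it is provided. The paper avoids this by keeping a compactly supported cutoff $\varphi_N$ in \emph{both} variables, sending $\delta\to 0$ first (so the inner integral becomes $\widetilde{\Cau}_{\mu}(\varphi_N)$, an honest weak $L^2$ limit paired against the $L^2$ function $\varphi_N\Phi_z$), and only then sending $N\to\infty$. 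Second, even after that reduction one must still convert $\widetilde{\Cau}_{\mu}(\varphi_N)$ into the constant $\kap$: the paper does this with a two-scale argument, comparing $\widetilde{\Cau}_{\mu}(\varphi_N)(\xi)$ with $\widetilde{\Cau}_{\mu}(1)(\xi)=\kap$ on $B(0,N^{\alpha})$ (error $O(N^{\alpha-1})$ via Lemma \ref{l1awayfromsupp}) and killing the annulus $B(0,2N)\setminus B(0,N^{\alpha})$ using $\|\Cau_{\mu}(\varphi_N)\|_{L^2(\mu)}\leq C\sqrt{N}$ together with $|\Phi_z(\xi)|\leq C|z|N^{-2\alpha}$ there, which forces the choice $\alpha\in(\tfrac12,1)$. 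Neither device appears in your proposal, so as written it carries out only the formal computation and defers the actual proof.
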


An immediate consequence of Lemma \ref{resolvlem} is that either $\widetilde{\Cau}_{\mu}(1)(z)=2\kap$ or $\widetilde{\Cau}_{\mu}(1)(z)=0$ for any $z\not\in \supp(\mu)$.  Since $\widetilde{\Cau}_{\mu}(1)$ is a continuous function away from $\supp(\mu)$, it follows that $\widetilde{\Cau}_{\mu}(1)$ is constant in each connected component of $\mathbb{C}\backslash \supp(\mu)$.

A variant of Lemma \ref{resolvlem}, where the Cauchy transform is considered in the sense of principal value, has previously appeared in work of Melnikov, Poltoratski, and Volberg, see Theorem 2.2 of \cite{MPV}.  We shall modify the proof from \cite{MPV} in order to prove Lemma \ref{resolvlem}.

We shall first provide an incorrect proof of this lemma.  Indeed, note the following regularized version of the resolvent identity: for any three distinct points $z,\xi, \omega\in \mathbb{C}$,
\begin{equation}\label{resolve}\begin{split}\Bigl[\frac{1}{z-\xi}&+\frac{1}{\xi}\Bigl]\cdot\Bigl[\frac{1}{\xi-\omega}+\frac{1}{\omega}\Bigl] +\Bigl[ \frac{1}{z-\omega}+\frac{1}{\omega}\Bigl]\cdot\Bigl[\frac{1}{\omega-\xi}+\frac{1}{\xi}\Bigl] \\
&=\Bigl[ \frac{1}{z-\xi}+\frac{1}{\xi}\Bigl]\cdot\Bigl[ \frac{1}{z-\omega}+\frac{1}{\omega}\Bigl].
\end{split}\end{equation}
Integrating both sides of this equality with respect to $d\mu(\xi)d\mu(\omega)$, we (only formally!) arrive at $2\widetilde{\Cau}_{\mu}(\widetilde{\Cau}_{\mu}(1))(z) = [\widetilde{\Cau}_{\mu}(z)]^2$.  Once this is established, Lemma \ref{refconstsupp} completes the proof.  The proof that follows is a careful justification of this integration.

\begin{proof}We shall define $\widetilde{\Cau}_{\mu, \delta}(\varphi)(\omega) = \int_{\mathbb{C}}\bigl[K_{\delta}(\omega - \xi) +\tfrac{1}{\xi}\bigl] \varphi(\xi)d\mu(\xi)$ for $\varphi\in \Lip_0(\mathbb{C})$.  In particular, as $\delta$ tends to $0$, $\widetilde{\Cau}_{\mu, \delta}(\varphi)$ converges to $\widetilde{\Cau}_{\mu}(\varphi)=\Cau_{\mu}(\varphi) - \Cau_{\mu}(\varphi)(0)$ weakly in $L^2(\mu)$.

Set $d_0 = \text{dist}(\{z,0\}, \supp(\mu))$.  Snce $d_0>0$, Lemma \ref{l1awayfromsupp} tells us that $[K(z-\cdot)+K(\cdot)]\in L^1(\mu)$.


For $N>0$, define a bump function $\varphi_N\in \Lip_0(\mathbb{C})$, satisfying $\varphi_N\equiv 1$ on $B(0, N)$, and $\supp(\varphi)\subset B(0, 2N)$.  Consider the identity (\ref{resolve}), and multiply both sides by $\varphi_N(\xi)\varphi_N(\omega)$.  After integration against $d\mu(\xi)d\mu(\omega)$, the right hand side of this equality becomes
$$\int_{\mathbb{C}}\Bigl[\frac{1}{z-\xi} + \frac{1}{\xi}\Bigl]\varphi_N(\xi)d\mu(\xi)\int_{\mathbb{C}}\Bigl[\frac{1}{z-\omega} + \frac{1}{\omega}\Bigl]\varphi_N(\omega)d\mu(\omega).
$$
But since $\displaystyle\bigl[K(z-\cdot)+K(\cdot)\bigl]\in L^1(\mu)$, the dominated convergence theorem ensures that as $N\rightarrow \infty$, this expression converges to $[\widetilde{\Cau}_{\mu}(1)(z)]^2$.

Now, let $\delta>0$, and note that
$$\frac{1}{\xi-\omega} = K_{\delta}(\xi-\omega)+ \chi_{B(0,\delta)}(\xi-\omega)\cdot\Bigl[\frac{1}{\xi-\omega}-\frac{\overline{\xi-\omega}}{\delta^2}\Bigl].
$$
Consider the integral
\begin{equation}\begin{split}\nonumber\int_{\mathbb{C}}\int_{\mathbb{C}} \chi_{B(0,\delta)}(\xi-\omega)&\Bigl[\frac{1}{\xi-\omega}-\frac{\overline{\xi-\omega}}{\delta^2}\Bigl]\cdot\Bigl[\frac{1}{z-\xi}+\frac{1}{\xi}-\frac{1}{z-\omega} - \frac{1}{\omega}\Bigl]\\
& \varphi_N(\xi)\varphi_N(\omega)d\mu(\xi)d\mu(\omega).
\end{split}\end{equation}
Note that $\bigl|\frac{1}{z-\xi}+\frac{1}{\xi}-\frac{1}{z-\omega} - \frac{1}{\omega}\bigl|\leq \frac{2}{d_0^2}|\xi-\omega|$ for $\xi, \omega\in \supp(\mu)$, and so this integral is bounded in absolute value by a constant multiple of $\int_{\mathbb{C}}\varphi_N(\xi) \mu(B(\xi, \delta))d\mu(\xi)$, which is bounded by $C\delta N.$  This converges to zero as $\delta\rightarrow 0$.

Making reference to (\ref{resolve}), we have thus far shown that
\begin{equation}\begin{split}\nonumber\lim_{N\rightarrow\infty}\lim_{\delta\rightarrow 0}&\int_{\mathbb{C}}\int_{\mathbb{C}}\varphi_N(\xi)\varphi_N(\omega)\Bigl\{\Bigl[\frac{1}{z-\xi}+\frac{1}{\xi}\Bigl]\Bigl[K_{\delta}(\xi-\omega)+\frac{1}{\omega}\Bigl] \\
&+\Bigl[ \frac{1}{z-\omega}+\frac{1}{\omega}\Bigl]\Bigl[K_{\delta}(\omega-\xi)+\frac{1}{\xi}\Bigl]\Bigl\}d\mu_N(\xi)d\mu_N(\omega)= [\widetilde{\Cau}_{\mu}(1)(z)]^2.
\end{split}\end{equation}

By Fubini's theorem, and the weak convergence of $\widetilde{\Cau}_{\mu, \delta}(\varphi)$ to $\widetilde{\Cau}_{\mu}(\varphi)$, the left hand side of this equality is equal to twice the following limit
\begin{equation}\begin{split}\lim_{N\rightarrow \infty}\Bigl[ \int_{\mathbb{C}} \varphi_N(\xi)\Bigl[\frac{1}{z-\xi} + \frac{1}{\xi}\Bigl]\widetilde{\Cau}_{\mu}(\varphi_N)(\xi) d\mu(\xi)\Bigl].
\end{split}\end{equation}
Therefore, to prove the lemma, it suffices to show that this limit equals $\kap \widetilde{\Cau}_{\mu}(1)(z)$.

To do this, let $\alpha\in (\tfrac{1}{2},1)$.  First consider
$$I_N = \int_{B(0, N^{\alpha})} \varphi_N(\xi)\Bigl|\frac{1}{z-\xi} + \frac{1}{\xi}\Bigl|\cdot|\widetilde{\Cau}_{\mu}(\varphi_N)(\xi) -\widetilde{\Cau}_{\mu}(1)(\xi)|d\mu(\xi).
$$
Note that, for $|\xi|\leq N^{\alpha}$, we have
$$|\widetilde{\Cau}_{\mu}(\varphi_N)(\xi) -\widetilde{\Cau}_{\mu}(1)(\xi)|\leq \int_{\mathbb{C}}|1-\varphi_N(\omega)|\Bigl|\frac{1}{\xi-\omega}-\frac{1}{\omega}\Bigl|d\mu(\omega).
$$
Applying Lemma \ref{l1awayfromsupp} yields an upper bound for the right hand side of $C\tfrac{|\xi|}{N}\leq CN^{\alpha-1}.$  But as $[K(z-\cdot)+K(\cdot)]\in L^1(\mu)$, we conclude that $I_N\rightarrow 0$ as $N\rightarrow \infty$.  Next, note that
$$\int_{B(0, N^{\alpha})} \!\!\varphi_N(\xi)\Bigl[\frac{1}{z-\xi} + \frac{1}{\xi}\Bigl]\widetilde{\Cau}_{\mu}(1)(\xi) d\mu(\xi)\! = \!\kap\! \int_{B(0, N^{\alpha})} \!\!\varphi_N(\xi)\Bigl[\frac{1}{z-\xi} + \frac{1}{\xi}\Bigl] d\mu(\xi),
$$
which converges to $\kap\cdot  \widetilde{\Cau}_{\mu}(1)(z)$ as $N\rightarrow \infty$.

To complete the proof the lemma, it now remains to show that
$$\lim_{N\rightarrow \infty}\int_{B(0,2N)\backslash B(0, N^{\alpha})}|\widetilde{\Cau}_{\mu}(\varphi_N)(\xi)|\cdot \Bigl|\frac{1}{z-\xi}-\frac{1}{\xi}\Bigl|d\mu(\xi) =0.
$$
To do this, first note that $|\widetilde{\Cau}_{\mu}(\varphi_N)(\xi)|\leq |\Cau_{\mu}(\varphi_N)(\xi)| + C\log \tfrac{N}{d_0}$ (this merely uses the $C_0$-niceness of $\mu$).  On the other hand, for sufficiently large $N$, $\bigl|\frac{1}{z-\xi}-\frac{1}{\xi}\bigl|\leq \frac{8|z|}{N^{2\alpha}}$ for $|\xi|\geq N^{\alpha}$.  Therefore, there is a constant $C=C(C_0,d_0)>0$ such that
\begin{equation}\begin{split}\nonumber&\int_{B(0,2N)\backslash B(0, N^{\alpha})}|\widetilde{\Cau}_{\mu}(1)(\xi)|\cdot \Bigl|\frac{1}{z-\xi}-\frac{1}{\xi}\Bigl|d\mu(\xi)\\
&\;\;\;\leq \frac{C|z|\log N}{N^{2\alpha}}\mu(B(0,2N))+ \frac{C|z|}{N^{2\alpha}}\int_{ B(0,2N)}|\Cau_{\mu}(\varphi_N)(\xi)|d\mu(\xi),
\end{split}\end{equation}
Finally, since  $\|\Cau_{\mu}(\varphi_N)\|_{L^2(\mu)} \leq C\sqrt{\mu(B(0, 2N))}$, and $\mu(B(0,2N))\leq CN$, we estimate the right hand side here by a constant multiple of $\frac{|z|N\log N}{N^{2\alpha}}$, which tends to zero as $N\rightarrow \infty$.\end{proof}

\section{The proof of Proposition \ref{reflectioncharac}}

In this section we conclude our analysis by proving Proposition \ref{reflectioncharac}.  To do this, we shall use the notion of a tangent measure, which was developed by Preiss \cite{P87}.  Suppose that $\nu$ is a Borel measure on $\Comp$.  The measure $\nu_{z,\lambda}(A) = \tfrac{\nu(\lambda A+z)}{\lambda}$ is called a $\lambda$-blowup of  $\nu$ at $z$.  A tangent measure of $\nu$ at $z$ is any measure that can be obtained as a weak limit of a sequence of $\lambda$-blowups of $\nu$ at $z$ with $\lambda \rightarrow 0$.

Now suppose that $\mu$ is a nontrivial $C_0$-good with AD regularity constant $c_0$.  Then any $\lambda$-blowup measure of $\mu$ at $z\in \supp(\mu)$ will again have these properties ($C_0$-goodness, and $c_0$-AD regularity).  Therefore, both properties are inherited by any tangent measure of $\mu$.  In particular, every tangent measure of $\mu$ at $z\in \supp(\mu)$ is non-trivial, provided that $\mu$ is non-trivial.  
Lastly, we remark that if $\mu$ is reflectionless, then any tangent measure of $\mu$ is also reflectionless.  This follows from a simple application of Lemma \ref{weakcontcau1}.

In what follows, it will often be notationally convenient to translate a point on $\supp(\mu)$ to the origin. Whenever this is the case, the definition of $\widetilde{\Cau}_{\mu}(1)$ in (\ref{C1offsupp}) is translated with the support of the measure, and becomes \begin{equation}\label{z0defn}\widetilde{\Cau}_{\mu}(1)(z) = \int_{\mathbb{C}}[K(z-\xi)-K(z_0-\xi)]d\mu(\xi),\end{equation}
for some $z_0\not\in\supp(\mu)$. If $\mu$ is reflectionless, then $\widetilde{\Cau}_{\mu}(1)$ is constant in each component of $\mathbb{C}\backslash \supp(\mu)$, and takes one of two values in $\mathbb{C}\backslash \supp(\mu)$.

\subsection{Step 1}  Suppose that $\supp(\mu)\subset L$, for some line $L$.  Then by translation and rotation we may as well assume that $L=\mathbb{R}$.  If the support is not the whole line, then there exists an interval $(x, x')$ disjoint from the support of $\mu$, with either $x$ or $x'$ in the support of $\mu$.  By rotating the support if necessary, we may assume that  $x'\in \supp(\mu)$.

Denote by $\tilde\mu$ a non-zero tangent measure of $\mu$ at $x'$.  
Then $\tilde\mu$ has support contained in the segment $[x',\infty)$, and $x'\in \supp(\tilde\mu)$.  Since $\tilde\mu$ is reflectionless, we may apply Lemma \ref{resolvlem} to deduce that $\widetilde{\Cau}_{\tilde\mu}(1)(x'-t)$ is constant for all $t>0$.  Differentiating this function with respect to $t$, we arrive at $\int_{x'}^{\infty}\frac{1}{(x-t-y)^2}d\tilde\mu(y).$
This integral is strictly positive as $\tilde\mu$ is not identically zero.  From this contradiction we see that $\supp(\mu)=\mathbb{R}$.

Consequently, we have that $d\mu(t) = h(t) dt$, where $c_0\leq h(t)\leq C_0$.  Now let $y>0$ and consider, for $x\in \mathbb{R}$,
$$\widetilde{\Cau}_{\mu}(1)(x-yi) - \widetilde{\Cau}_{\mu}(1)(x+yi) = 2i\int_{\mathbb{R}}\frac{y}{(x-t)^2+y^2}h(t) dt.
$$
The expression on the left hand side is constant in $x\in \mathbb{R}$ and $y>0$.  On the other hand, the integral on the right hand side is a constant multiple of the harmonic extension of $h$ to $\mathbb{R}^2_+$.  The Poisson kernel is an approximate identity, and so by letting $y\rightarrow0^{+}$ we conclude that $h$ is a constant.  Therefore $\mu = c m_1,$ with $c>0$.

\subsection{Step 2}  We now turn to the general case.  We first introduce some notation.  For $z\in \mathbb{C}$ and a unit vector $e$, $H_{z, e}$ denotes the (closed) half space containing $z$ on the boundary, with inner unit normal $e$.   With $\alpha\in (0,1)$, we denote $C_{z,e}(\alpha) = \{\xi\in \mathbb{C}: \langle \xi-z , e\rangle > \alpha |\xi-z|\}$, where $\langle \cdot, \cdot \rangle$ is the standard inner product in $\mathbb{R}^2$.

\begin{lem}\label{nothingotherside}  Suppose that $z\not\in \supp(\mu)$.  Let $\tilde{z}$ be a closest point in $\supp(\mu)$ to $z$, and set $e = \tfrac{\tilde{z}-z}{|\tilde{z}-z|}$.  For each $\alpha\in (0,1)$, there is a radius $r_{\alpha}>0$ such that $B(\tilde z, r_{\alpha})\cap C_{\tilde z, e}(\alpha)$ is disjoint from $\supp(\mu)$.
\end{lem}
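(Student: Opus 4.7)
The plan is to argue by contradiction via a blow-up of $\mu$ at $\tilde{z}$. Suppose the conclusion fails for some $\alpha_0 \in (0,1)$: there exists a sequence $\xi_n \in \supp(\mu) \cap C_{\tilde{z}, e}(\alpha_0)$ with $\xi_n \to \tilde{z}$. Set $\lambda_n = |\xi_n - \tilde{z}| \to 0^+$ and form the blow-ups $\tilde\mu_n := \mu_{\tilde{z}, \lambda_n}$. Each $\tilde\mu_n$ inherits from $\mu$ the properties of being $C_0$-good, $c_0$-AD-regular, and reflectionless (by scaling invariance), and $0 \in \supp(\tilde\mu_n)$. By weak compactness, pass to a subsequence converging weakly to a tangent measure $\tilde\mu$; by Lemma~\ref{wlnicegood} and Lemma~\ref{weakcontcau1} (used exactly as in the construction of Lemma~\ref{absurd}), $\tilde\mu$ is again good, AD-regular, and reflectionless.

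Three geometric features of $\tilde\mu$ follow. First, $0 \in \supp(\tilde\mu)$ by Lemma~\ref{ADsupp}. Second, $\supp(\tilde\mu) \subset H := \{\xi : \langle \xi, e \rangle \geq 0\}$, because the open disc $B(z, |z-\tilde{z}|)$ is disjoint from $\supp(\mu)$ and rescales under the blow-ups to exhaust the open half-plane $\{\langle \xi, e \rangle < 0\}$. Third, the unit vectors $(\xi_n - \tilde{z})/\lambda_n \in \supp(\tilde\mu_n) \cap \overline{C_{0, e}(\alpha_0)}$ admit a subsequential limit $\xi^{\ast} \in \supp(\tilde\mu)$ with $|\xi^{\ast}| = 1$ and $\langle \xi^{\ast}, e\rangle \geq \alpha_0$, again by Lemma~\ref{ADsupp}. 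The problem thus reduces to ruling out any nontrivial good AD-regular reflectionless measure whose support is contained in a closed half-plane, contains the boundary point $0$, and contains a point strictly interior to the half-plane.

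Rotate so that $e$ is the positive real direction, so $H = \{\mathrm{Re}(\xi) \geq 0\}$. By Lemmas~\ref{refconstsupp} and~\ref{resolvlem}, $\widetilde{\Cau}_{\tilde\mu}(1)$ is locally constant on $\mathbb{C}\setminus \supp(\tilde\mu)$, taking values in $\{0, 2\kap\}$. Since the open left half-plane is one such component, the holomorphic function
\[
F(w) \;=\; \int_{\mathbb{C}} \frac{d\tilde\mu(\xi)}{(w - \xi)^2} \;=\; -\frac{d}{dw}\widetilde{\Cau}_{\tilde\mu}(1)(w)
\]
vanishes identically on $\{\mathrm{Re}(w) < 0\}$. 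The aim is to deduce that $\supp(\tilde\mu) \subset i\mathbb{R}$; once this is shown, Step~1 applied to $\tilde\mu$ forces $\tilde\mu$ to be a positive constant multiple of $\mathcal{H}^1_{i\mathbb{R}}$, contradicting $\xi^{\ast} \in \supp(\tilde\mu)$ with $\mathrm{Re}(\xi^{\ast}) \geq \alpha_0 > 0$.

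The main obstacle is exactly this last step: deducing from the vanishing of $F$ on the open left half-plane that $\tilde\mu$ carries no mass strictly inside the right half-plane. My plan is to combine the boundary behaviour of $F$ along the imaginary axis (at each $iy \notin \supp(\tilde\mu)$, $F(iy) = \lim_{t \to 0^+}F(-t + iy) = 0$) with the AD-regularity of the restriction of $\tilde\mu$ to $i\mathbb{R}$, and to exploit the fact that for $w$ with $\mathrm{Re}(w) < 0$ the kernel $(w-\xi)^{-2}$ keeps a preferred phase whenever $\mathrm{Re}(\xi) \geq 0$, so that any nonzero mass of $\tilde\mu$ strictly inside the right half-plane produces a contribution to $F(w)$ that cannot be cancelled by mass supported on the imaginary axis. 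Making this no-cancellation argument rigorous, and controlling the possibly intricate structure of $\supp(\tilde\mu) \cap i\mathbb{R}$, is where I expect the real work of the proof to lie.
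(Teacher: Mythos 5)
There is a genuine gap: your argument is a reduction, not a proof. After the blow-up you must rule out a nontrivial good AD-regular reflectionless measure $\tilde\mu$ supported in a closed half-plane, with $0$ on the boundary and with a support point $\xi^{\ast}$ strictly inside, and you explicitly leave this step as a plan (``where I expect the real work of the proof to lie''). That step is not a technicality; it is essentially a global classification statement of the same depth as Proposition \ref{reflectioncharac} itself (compare Lemma \ref{halfspacelem}, whose proof in the paper already \emph{uses} Lemma \ref{nothingotherside}). Moreover, the specific mechanism you sketch is problematic: for $\Re(w)<0$ and $\Re(\xi)\geq 0$ the kernel $(w-\xi)^{-2}$ has no sign-definite real or imaginary part (e.g.\ $\Re\bigl[(s+\xi)^{-2}\bigr]=\bigl((s+x)^2-y^2\bigr)/|s+\xi|^4$ changes sign on the half-plane), so ``a preferred phase'' that prevents cancellation between mass on $i\mathbb{R}$ and mass in $\{\Re>0\}$ is not available without substantial extra work. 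Note also that in the tangent measure the offending point $\xi^{\ast}$ sits at unit distance from the boundary, so no singularity is being approached and no quantity is forced to blow up; whatever contradiction you extract must be a genuinely global one.

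For contrast, the paper's proof is local and quantitative, and avoids blow-ups entirely. Normalizing $\tilde z=0$, $e=i$, $z=-ri$, one observes that $\Im[\widetilde{\Cau}_{\mu}(1)(-ti)]$ is \emph{bounded uniformly} in $t\in(0,r/2)$ (by Lemma \ref{resolvlem} it is constant on the component of $z$). Splitting the integral into a far region, the region below the horizontal line through $-ti$, and the remaining region $III$, the first two contributions are bounded, while on $III$ the integrand $\frac{\Im(\xi)+t}{|\xi+it|^2}$ is \emph{nonnegative} --- this is the sign-definiteness your squared kernel lacks. If support points $z_j\in C_{0,e}(\alpha)$ accumulate at $0$, the pairwise disjoint balls $B(z_j,\tfrac{\alpha}{2}|z_j|)$ lie in $III$, each carries mass $\gtrsim c_0\alpha|z_j|$ by AD-regularity from below, and each contributes $\gtrsim c_0\alpha^2$ to the integral once $t\leq |z_j|/2$; letting $t\to 0^+$ makes the integral over $III$ exceed any bound. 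If you want to salvage your approach, the cleanest fix is to abandon the $(w-\xi)^{-2}$ phase argument and run this same Poisson-kernel positivity estimate, which already works directly on $\mu$ without any tangent-measure machinery.
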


\begin{proof}

\begin{figure}[t]\label{ballpic}
\centering
 \includegraphics[width = 108mm]{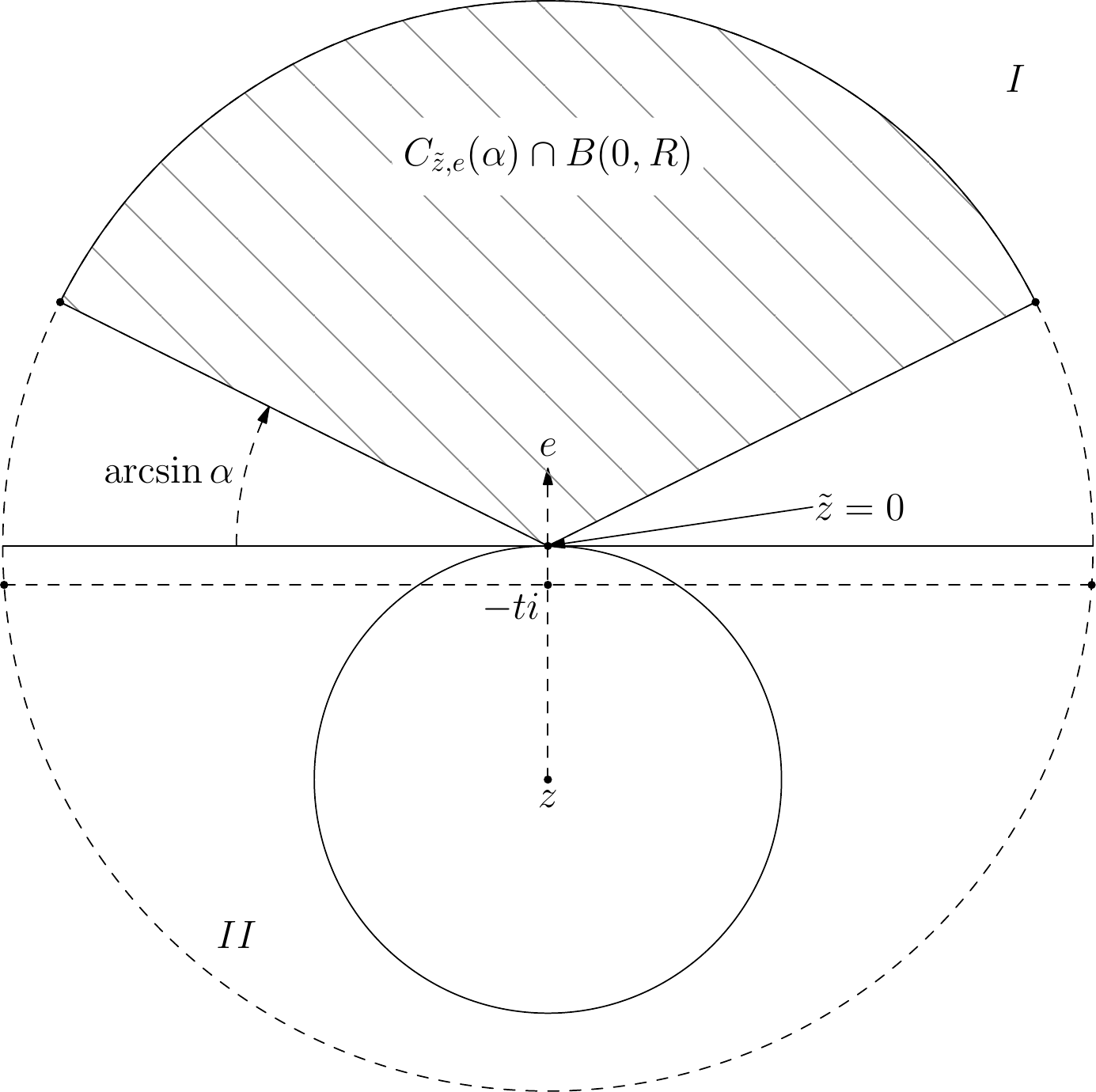}
\caption{The set-up for Lemma \ref{nothingotherside}}
 \end{figure}

 We may suppose that $z=-r i$ for some $r>0$ and $\tilde{z}=0$, (and so $e=i$).  We shall examine the imaginary part of the Cauchy transform evaluated at $-ti$ for  $t\in (0,\tfrac{r}{2})$:
 $$\Im [\widetilde{\Cau}_{\mu}(1)(-ti)] = \int_{\mathbb{C}}\Bigl[\frac{\Im(\xi)+t}{|\xi+it|^2}-\frac{\Im(\xi-z_0)}{|\xi-z_0|^2}\Bigl]d\mu(\xi).
 $$

Lemma \ref{resolvlem} guarantees that $\Im [\widetilde{\Cau}_{\mu}(1)(-ti)] = \Im[\widetilde{\Cau}_{\mu}(1)(z)]$ for any $t>0$.  In particular, it is bounded independently of $t$.

Making reference to Figure 5, we let $R>3r$, and define three regions:
$I = \mathbb{C}\backslash B(0,R)$,  $II = \bigl\{\xi\in B(0,R) : \Im(\xi)<-t\}$, and $III = B(0,R)\backslash II.$

Set $d_0=\text{dist}(z_0, \supp(\mu)).$  First note that if $R>3|z_0|$, and $\xi \in \mathbb{C}\backslash B(0,R)$, then
$$\Bigl| \frac{\Im(\xi)+t}{|\xi+it|^2}-\frac{\Im(\xi-z_0)}{|\xi-z_0|^2}\Bigl|\leq \frac{C}{|\xi|^2}.
$$
Therefore,
\begin{equation}\begin{split}\nonumber \int_{|\xi|\geq R}&\Bigl|\frac{\Im(\xi)+t}{|\xi+it|^2} - \frac{\Im(\xi-z_0)}{|\xi-z_0|^2}\Bigl|d\mu(\xi)+ \int_{B(0,R)}\Bigl|\frac{\Im(\xi-z_0)}{|\xi-z_0|^2}\Bigl|d\mu(\xi)\\
&\leq  \int_{|\xi|\geq R} \frac{C}{|\xi|^2}d\mu(\xi)+ \int_{B(0, R)}\frac{1}{|\xi-z_0|}d\mu(\xi).
\end{split}\end{equation}
The right hand side of this inequality if finite and independent of $t$.

Next, note that if $\xi\in II\cap \supp(\mu)$, then $|\xi-it|^2\geq -(\Im(\xi)+t)r$, provided that $|\Im(\xi)|<\tfrac{r}{2}$ and $t<\tfrac{r}{2}$.  To see this, note that $|\xi-z|>r$, and so by elementary geometry, $|\xi-it|^2\geq r^2 - (r+(\Im (\xi)+t))^2$. This is at least $-(\Im(\xi)+t)r$ under our assumptions on $\xi$ and $t$.   Therefore, if $t<\tfrac{r}{2}$, then
$$\int_{II}\frac{\Im(\xi)+t}{|\xi+it|^2}d\mu(\xi)\geq -\int_{II\cap B(0, \tfrac{r}{2})}\frac{1}{r}d\mu(\xi) - \Bigl|\int_{II\backslash B(0,\tfrac{r}{2})}\frac{\Im(\xi)+t}{|\xi+it|^2}d\mu(\xi)\Bigl|.
$$
Both terms on the right hand side are bounded in absolute value by $C\tfrac{\mu(B(0,R))}{r}\leq \tfrac{CR}{r}$  (recall that $B(z,r)\cap\supp(\mu)=\varnothing$).  Note that the integral on the left hand side is at most zero.

Our conclusion thus far is that there is a constant $\Delta$, depending on $C_0,d_0, R,r$ and $\Im(\widetilde{\Cau}_{\mu}(1)(z))$, such that for any $t<\tfrac{r}{2}$,
\begin{equation}\label{IIIbound}\Bigl|\int_{III}\frac{\Im(\xi)+t}{|\xi+it|^2}d\mu(\xi)\Bigl|\leq \Delta.
\end{equation}
Note that the integrand in this integral is positive for any $\xi\in III$.

Suppose now that the statement of the lemma is false.  Then there exists $\alpha>0$, along with a sequence $z_j\in C_{0,e}(\alpha)\cap \supp(\mu)$ with $z_j\rightarrow 0$ as $j\rightarrow \infty$.  By passing to a subsequence, we may assume that $|z_j|\leq\tfrac{R}{2}$ for each $j$, and also that the balls $B_j = B(z_j, \tfrac{\alpha}{2}|z_j|)$ are pairwise disjoint.

Each ball $B_j\subset III$, and provided that $t\leq \tfrac{\alpha}{2}|z_j|$, we have
$$\frac{\Im(\xi)+t}{|\xi+ti|^2}\geq \frac{\alpha|z_j|}{8|z_j|^2} = \frac{\alpha}{8|z_j|},  \text{ for any }\xi \in B_j.
$$
As a result, we see that
$$\int_{III}\frac{\Im(\xi)+t}{|\xi+it|^2}d\mu(\xi)\geq \sum_{j: \, t\leq|z_j|/2} \int_{B_j}\frac{\Im(\xi)+t}{|\xi+ti|^2}d\mu(\xi) \geq \sum_{j: \, t\leq|z_j|/2}\mu(B_j)\frac{\alpha}{8|z_j|}.
$$
But $\mu(B_j)\geq \tfrac{c_0\alpha}{2}|z_j|$, and so the previous integral over $III$ has size at least $\tfrac{c_0\alpha^2}{16}\cdot\text{card}\{j: t\leq \tfrac{1}{2}|z_j|\}$. However, if $t$ is sufficiently small, then this quantity may be made larger than the constant $\Delta$ appearing in (\ref{IIIbound}).  This is absurd.
\end{proof}

We now pause to prove a simple convergence lemma.

\begin{lem}\label{outsideconv}Suppose that $\nu_k$ is a sequence of $C_0$-nice measures with AD-regularity constant $c_0$ that converges to $\nu$ weakly as $k\rightarrow \infty$ (and so $\nu$ is $C_0$-nice with AD-regularity constant $c_0$).  If $z_0\not\in\supp(\nu)$, then for any $z\not\in \supp(\nu)$, $\widetilde{\Cau}_{\nu_k}(1)(z)$ is well defined (as in (\ref{z0defn})) for sufficiently large $k$, and $\widetilde{\Cau}_{\nu_k}(1)(z)\rightarrow \widetilde{\Cau}_{\nu}(1)(z)$ as $k\rightarrow \infty$.\end{lem}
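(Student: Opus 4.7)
The plan is to deduce the convergence from weak convergence applied to a truncated integrand, with the tail controlled by the $|\xi|^{-2}$ decay of $K(z-\xi)-K(z_0-\xi)$. The only subtle point is that $\widetilde{\Cau}_{\nu_k}(1)(z)$ requires $z, z_0\notin\supp(\nu_k)$, and we must propagate this from the hypothesis $z, z_0\notin\supp(\nu)$.

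I would first establish a \emph{uniform separation}: there exist $\eps_0>0$ and $k_0\in\mathbb{N}$ such that $B(z,\eps_0)\cup B(z_0,\eps_0)$ is disjoint from $\supp(\nu_k)$ for every $k\geq k_0$. Begin by choosing $\eps_0>0$ so that $B(z,2\eps_0)\cup B(z_0,2\eps_0)$ avoids $\supp(\nu)$. If the claim failed, a diagonal extraction would produce a subsequence $k_j\to\infty$ and points $w_{k_j}\in\supp(\nu_{k_j})\cap\bigl(\overline{B(z,\eps_0)}\cup\overline{B(z_0,\eps_0)}\bigr)$ converging to some $w$ in that compact set; Lemma \ref{ADsupp} would place $w$ inside $\supp(\nu)$, contradicting the choice of $\eps_0$. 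Once the separation is in hand, Lemma \ref{l1awayfromsupp} guarantees that the integral in (\ref{z0defn}) converges absolutely, so $\widetilde{\Cau}_{\nu_k}(1)(z)$ is well-defined for $k\geq k_0$.

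For the convergence, fix $R>|z|+|z_0|+1$ and introduce two cutoffs: $\varphi_R\in C_0(\mathbb{C})$ equal to $1$ on $B(0,R)$ and vanishing outside $B(0,2R)$, together with $\eta\in C(\mathbb{C})$ equal to $0$ on $B(z,\eps_0/2)\cup B(z_0,\eps_0/2)$ and to $1$ outside $B(z,\eps_0)\cup B(z_0,\eps_0)$. Then
$$g_R(\xi):=\eta(\xi)\varphi_R(\xi)\bigl[K(z-\xi)-K(z_0-\xi)\bigr]$$
belongs to $C_0(\mathbb{C})$. For $k\geq k_0$, neither $\supp(\nu_k)$ nor $\supp(\nu)$ meets the set $\{\eta\neq 1\}$, so integrating $g_R$ against $\nu_k$ (or $\nu$) gives the same value as integrating $\varphi_R[K(z-\cdot)-K(z_0-\cdot)]$. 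Weak convergence $\nu_k\to\nu$ applied to $g_R$ then yields convergence of these bulk integrals as $k\to\infty$.

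Finally, I would control the tail using the identity $K(z-\xi)-K(z_0-\xi)=\tfrac{z-z_0}{(z-\xi)(z_0-\xi)}$, which for $|\xi|\geq 2(|z|+|z_0|)$ gives $|K(z-\xi)-K(z_0-\xi)|\leq C|z-z_0||\xi|^{-2}$. Lemma \ref{tailest} then bounds the integrals of $(1-\varphi_R)[K(z-\cdot)-K(z_0-\cdot)]$ against $\nu_k$ and against $\nu$ by $C(C_0,z,z_0)/R$, uniformly in $k$. Combined with the bulk convergence, this yields $\limsup_{k\to\infty}|\widetilde{\Cau}_{\nu_k}(1)(z)-\widetilde{\Cau}_{\nu}(1)(z)|\leq C/R$; letting $R\to\infty$ completes the proof. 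I expect the uniform support-separation step to be the main (if modest) obstacle, since everything afterward is a standard cutoff-and-weak-convergence routine.
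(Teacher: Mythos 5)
Your proposal is correct and follows essentially the same route as the paper: establish that $z$ and $z_0$ stay uniformly separated from $\supp(\nu_k)$ for large $k$ (the paper deduces this directly from AD-regularity of the $\nu_k$, you via Lemma \ref{ADsupp} and compactness, which amounts to the same fact), then split the integral with a cutoff at radius $R$, handling the bulk by weak convergence and the tail by the $|\xi|^{-2}$ decay. Your extra cutoff $\eta$ near $z$ and $z_0$, which makes the test function genuinely continuous before invoking weak convergence, is a detail the paper leaves implicit, and is a worthwhile clarification rather than a different argument.
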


\begin{proof}  First note that there exists $r>0$ such that $\nu(B(z_0,r))=0=\nu(B(z,r))$.  But then, by the AD regularity of each $\nu_k$, we must have that $\nu_k(B(z_0, \tfrac{r}{2}))=0=\nu_k(B(z,\tfrac{r}{2}))$ for sufficiently large $k$, and hence $\widetilde{\Cau}_{\nu_k}(1)(z)$ is well defined.  Let $N>0$, and choose $\varphi_N \in \Lip_0(\mathbb{C})$ satisfying $\varphi_N\equiv 1$ on $B(0,N)$ and $0\leq \varphi_N\leq 1$ in $\mathbb{C}$.  For large enough $k$, $|\widetilde\Cau_{\nu}(1)(z) -\widetilde\Cau_{\nu_k}(1)(z)|$ is no greater than the sum of $|\int_{\mathbb{C}}[K(z-\xi)-K(z_0-\xi)]\varphi_N(\xi) d(\nu-\nu_k)(\xi)|$ and $|\int_{\mathbb{C}}[K(z-\xi)-K(z_0-\xi)][1-\varphi_N(\xi)] d(\nu-\nu_k)(\xi)|$.  The first of these two terms tends to zero as $k\rightarrow\infty$, while the second has size at most $\tfrac{C|z-z_0|}{N}$ (for sufficiently large $N$) due to Lemma \ref{l1awayfromsupp}.  This establishes the required convergence.\end{proof}

\begin{lem}\label{halfspacelem}  Suppose that $z\not\in \supp(\mu)$, and $\tilde z$ is a closest point on the support of $\mu$ to $z$.  Let $e= \tfrac{\tilde{z}-z}{|\tilde{z}-z|}$.  Then $\supp(\mu)\subset H_{\tilde{z},e}$.
\end{lem}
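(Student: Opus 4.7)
My plan is to argue by contradiction. Suppose there exists $w \in \supp(\mu)$ with $\langle w - \tilde z, e\rangle < 0$. After translation and rotation, I may normalize so that $\tilde z = 0$, $e = i$, and $z = -ri$ with $r = |z| > 0$, in which case the hypothesis reads $\Im w < 0$ and the conclusion to be proved is $\supp(\mu) \subset \{\xi : \Im \xi \geq 0\}$.

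My first task would be to pin down the local structure of $\mu$ at $\tilde z$ via tangent measures. Any tangent measure $\sigma$ of $\mu$ at $\tilde z$ is again $C_0$-good, $c_0$-AD regular, and reflectionless (the last by an application of Lemma \ref{weakcontcau1} to the blow-up sequence $\mu_{\tilde z, \lambda_n}$). Passing Lemma \ref{nothingotherside} to the blow-up limit puts $\supp(\sigma) \subset \{\xi : \Im \xi \leq 0\}$; on the other hand, the emptiness of $B(z,r)$ rescales at $\tilde z$ to the half-plane $\{\Im\xi \geq 0\}$, forcing $\supp(\sigma) \subset \{\Im\xi \geq 0\}$ as well. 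Hence $\supp(\sigma) \subset \mathbb{R}$, and Step 1 of this section (the case of a reflectionless measure supported on a line) gives $\sigma = c\,m_1$ for some $c \geq c_0 > 0$.

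Next, I would convert this local structure into a jump statement for $\widetilde{\Cau}_\mu(1)$. Using Lemma \ref{outsideconv}, the Cauchy transforms $\widetilde{\Cau}_{\mu_{\tilde z,\lambda_n}}(1)$ converge pointwise off $\supp(\sigma)$ to $\widetilde{\Cau}_{c\, m_1}(1)$, which takes two distinct constant values $\mp\pi i c$ on the open half-planes $\{\Im\xi>0\}$ and $\{\Im\xi<0\}$. Therefore $\widetilde{\Cau}_\mu(1)$ has distinct non-tangential limits at $\tilde z$ from the $-e$ side (approaching through $B(z,r)$) and from the $+e$ side (approaching through the empty cone $C_{\tilde z,e}(\alpha) \cap B(\tilde z, r_\alpha)$), differing by $2\pi i c \neq 0$. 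Since by Lemma \ref{resolvlem} the function $\widetilde{\Cau}_\mu(1)$ is locally constant on $\Comp \setminus \supp(\mu)$ with values in $\{0,2\kap\}$, the ball $B(z,r)$ and the empty cone must lie in distinct connected components $U$ and $V$ of $\Comp \setminus \supp(\mu)$, on which $\widetilde{\Cau}_\mu(1)$ takes the two different values.

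Finally, I would use the existence of $w$ to produce a continuous path in $\Comp \setminus \supp(\mu)$ connecting $U$ to $V$, contradicting the distinctness of $\widetilde{\Cau}_\mu(1)$ on the two components. The intuition is that locally at $\tilde z$ the support coincides with the real axis, so near $\tilde z$ the sets $U$ and $V$ are separated by this axis; but the presence of $w \in \supp(\mu)$ with $\Im w < 0$ is a global obstruction to the support being the whole real axis, and a tangent-measure analysis at $w$ (with a suitable exterior approach point for which $w$ is the nearest support point) yields another local tangent line whose orientation is incompatible with a single straight global support. Exploiting this incompatibility would produce a gap in the support through which a path from $V$ can be routed to $U$. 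The main obstacle I anticipate is executing this last step rigorously, in particular choosing the exterior approach point at $w$ so that the tangent-measure analysis applies cleanly; the cleanest route may instead be to choose $w$ extremally (e.g., minimizing $\Im w$ or maximizing some angular quantity from $\tilde z$) and to derive the geometric contradiction directly from the interaction between the tangent-line normals at $\tilde z$ and at $w$.
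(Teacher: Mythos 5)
Your setup is sound and matches the first half of the paper's argument: the tangent measure at $\tilde z$ is $c\,m_1$ on the tangent line (combining Lemma \ref{nothingotherside} with Step 1), the blow-up limits of $\widetilde{\Cau}_\mu(1)$ computed via Lemma \ref{outsideconv} exhibit a nonzero jump $2\pi i c$ across $\tilde z$, and hence $B(z,r)$ and the empty cone on the $+e$ side lie in distinct components of $\Comp\setminus\supp(\mu)$, on which $\widetilde{\Cau}_\mu(1)$ takes its two distinct values. But the final step --- deriving a contradiction from the existence of $w$ with $\Im w<0$ --- is exactly the content of the lemma, and you do not execute it. The "route a path from $V$ to $U$ through a gap in the support" idea is not obviously realizable: the support can be globally complicated, and nothing you have established produces such a path. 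You acknowledge the gap yourself, and neither of your two suggested remedies (a path, or an extremal choice of $w$) is carried through, so as it stands this is a genuine missing idea, not a routine verification.

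The paper closes the argument differently, and the mechanism is worth internalizing. The jump identity shows that the difference $\sigma=\widetilde{\Cau}_\mu(1)(z)-\widetilde{\Cau}_\mu(1)(t_0e)$ equals $-2\pi c^{\star}e^{-i\theta}$, so the value of $\widetilde{\Cau}_\mu(1)$ at $z$ (one of only two possible values) \emph{determines the tangent direction} $e$. Since $\widetilde{\Cau}_\mu(1)$ is constant on components (Lemma \ref{resolvlem}), every point of the component of $\Comp\setminus\supp(\mu)$ containing $z$ has the \emph{same} direction of tangency $e$ to $\supp(\mu)$. This rigidity is then used to grow the empty ball: for $\rho$ in the set $\mathcal{I}$ of radii such that the segment $\{-te:t\in(0,\rho]\}$ stays in the component of $z$, the nearest support point to $-\rho e$ must lie on the ray from $-\rho e$ in direction $e$, which is impossible for any point other than $\tilde z=0$ itself; hence $B(-\rho e,\rho)\cap\supp(\mu)=\varnothing$, which shows $\rho\in\mathcal{I}$ implies $(0,2\rho)\subset\mathcal{I}$, so $\mathcal{I}=(0,\infty)$ and the whole open half-plane $\bigcup_{\rho>0}B(-\rho e,\rho)$ misses the support. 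No direct argument at the hypothetical point $w$ is needed. To repair your proof you would need to supply this (or an equivalent) propagation mechanism; the local information at $\tilde z$ alone does not rule out your $w$.
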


\begin{proof}  Write $e=e^{i\theta}$.  By translation, we may assume that $\tilde{z}=0$.  To prove the lemma, it suffices to show that $B(-\rho e, \rho)\cap \supp(\mu)=\varnothing$ for all $\rho>0$.

Fix $t_0$ small enough to ensure that $te\not\in\supp(\mu)$ for any $0<t\leq t_0$.  The existence of $t_0>0$ is guaranteed by Lemma \ref{nothingotherside}.  Now set $\sigma = \widetilde{\Cau}_{\mu}(1)(z)-\widetilde{\Cau}_{\mu}(1)(t_0e)$.  Notice that the value of $\sigma$ is independent of the choice of $z_0\not\in \supp(\mu)$ in (\ref{z0defn}), so we shall fix $z_0=t_0e$.  Now, let $\mu^{\star}$ denote a tangent measure to $\mu$ at $0$.  On account of Lemma \ref{nothingotherside}, the support of $\mu^{\star}$ is contained in the line $L$ through $0$ perpendicular to $e$.  By Step 1, $\mu^{\star} = c^{\star}\mathcal{H}^1_{L}$ with $c^{\star}\in [c_0,C_0]$.  As a result, for any $y>0$, we have that
\begin{equation}\label{secondjump}\widetilde{\Cau}_{\mu^{\star}}(1)(\!-ye)\! -\!\widetilde{\Cau}_{\mu^{\star}}(1)(ye)  \!= \! \int_{\mathbf{R}}\! \frac{-2e^{-i\theta}y}{t^2+y^2} c^{\star} dm_1(t)\!=\!-2\pi c^{\star} e^{-i\theta}.
\end{equation}
We claim that $\widetilde{\Cau}_{\mu^{\star}}(1)(-ye) - \widetilde{\Cau}_{\mu^{\star}}(1)(ye) = \sigma$. To see this, note that for $\lambda>0$ small enough so that $y\lambda\leq t_0$, we have $\widetilde{\Cau}_{\mu_{0,\lambda}}(1)(-ye)-\widetilde{\Cau}_{\mu_{0,\lambda}}(1)(ye) = \widetilde{\Cau}_{\mu}(1)(-\lambda ye)-\widetilde{\Cau}_{\mu}(1)(\lambda ye)$.  But this equals $\sigma$ because $-\lambda ye$ and $\lambda ye$ lie in the same connected components of $\mathbb{C}\backslash\supp(\mu)$ as $z$ and $t_0 e$ respectively.  Since $\mu^{\star}$ is a weak limit of measures $\mu_{0,\lambda_k}$ for some sequence $\lambda_k\rightarrow 0$, applying Lemma \ref{outsideconv} proves the claim.  Consequently, we have that $\sigma$ determines the direction of tangency from $z$ to $\supp(\mu)$ (the angle $\theta$).

The right hand side of (\ref{secondjump}) is non-zero, and so $t_0e$ lies in a different component of $\mathbb{C}\backslash\supp(\mu)$ to $z$.  As there are only two possible values that $\widetilde{\Cau}_{\mu}(1)$ can take in $\mathbb{C}\backslash\supp(\mu)$, $\sigma$ is determined by $\widetilde{\Cau}_{\mu}(1)(z)$.  Since $\widetilde{\Cau}_{\mu}(1)$ is constant in each connected component of $\mathbb{C}\backslash \supp(\mu)$, the direction of tangency from any point in the connected component of $\mathbb{C}\backslash \supp(\mu)$ containing $z$ to $\supp(\mu)$ is the same.
Finally, set \begin{equation}\begin{split}\nonumber\mathcal{I} = \bigl\{\rho>0: \{-t e: t\in (0,\rho]\}&\text{ lies in the same connected component}\\
&\text{ of }\mathbb{C}\backslash\supp(\mu)\text{ as }z\bigl\}.\end{split}\end{equation}  We claim that if $\rho\in \mathcal{I}$, then $B(-\rho e, \rho)\cap \supp(\mu)=\varnothing$.  Indeed, otherwise there is a point $\zeta\neq 0$ which is a closest point in $\supp(\mu)$ to $-\rho e$.  But then it follows that $e=\tfrac{\zeta+\rho e}{|\zeta+\rho e|}$.  Given that $\{-te: t\in (0, \rho]\}\cap \supp(\mu)=\varnothing$, this is a contradiction.   From this claim, we see that if $\rho \in \mathcal{I}$, then $(0,2\rho)\subset\mathcal{I}$.  Since $|z|\in \mathcal{I}$, it follows that $\mathcal{I}=(0,\infty)$, so $B(-\rho e, \rho)\cap \supp(\mu)=\varnothing$ for any $\rho>0$.
\end{proof}

\begin{proof}[Proof of Proposition \ref{reflectioncharac}]
An immediate corollary of Lemma \ref{halfspacelem} is the following statement:  For each $z\not\in \supp(\mu)$, there is a half space with $z$ on its boundary which does not intersect $\supp(\mu)$.

Now, suppose that there are three points $z, \xi,\zeta \in \text{supp}(\mu)$ which are not collinear.  Then they form a triangle.  Since $\mu$ is AD-regular, there is a point in the interior of this triangle outside of the support of $\mu$.  Let's call this point $\omega$.  But then there is a half space, with $\omega$ on its boundary, which is disjoint from $\supp(\mu)$. This half space must contain at least one of the points $z$, $\xi$ or $\zeta$.  This is absurd.
\end{proof}



 \end{document}